\let\pa\partial  
\let\na\nabla  
\let\eps\varepsilon  
\newcommand{\N}{{\mathbb N}}  
\newcommand{\R}{{\mathbb R}} 
\newcommand{\D}{{\mathscr D}}
\newcommand{\diver}{\operatorname{div}}
\newtheorem{theorem}{Theorem}   
\newtheorem{lemma}[theorem]{Lemma}   
\newtheorem{remark}[theorem]{Remark}
\newtheorem{example}{Example} 
\begin{document}  

\title[Degenerate cross-diffusion population models]{Analysis of degenerate 
cross-diffusion population models with volume filling}

\author[N. Zamponi]{Nicola Zamponi}
\address{Institute for Analysis and Scientific Computing, Vienna University of  
Technology, Wiedner Hauptstra\ss e 8--10, 1040 Wien, Austria}
\email{nicola.zamponi@tuwien.ac.at}

\author[A. J\"{u}ngel]{Ansgar J\"{u}ngel}
\address{Institute for Analysis and Scientific Computing, Vienna University of  
Technology, Wiedner Hauptstra\ss e 8--10, 1040 Wien, Austria}
\email{juengel@tuwien.ac.at}

\date{\today}

\thanks{The authors acknowledge partial support from   
the Austrian Science Fund (FWF), grants P24304, P27352, and W1245}  

\begin{abstract}
A class of parabolic cross-diffusion systems modeling the interaction of an
arbitrary number of population species is analyzed in a bounded domain with
no-flux boundary conditions.
The equations are formally derived from a random-walk lattice model 
in the diffusion limit. Compared to previous results in the literature,
the novelty is the combination of general degenerate diffusion and
volume-filling effects. 
Conditions on the nonlinear diffusion coefficients are identified, which yield
a formal gradient-flow or entropy structure. This structure allows for the proof
of global-in-time existence of bounded weak solutions 
and the exponential convergence of the solutions to the constant steady state. 
The existence proof is based on an approximation argument, the entropy inequality,
and new nonlinear Aubin-Lions compactness lemmas. The proof of the large-time
behavior employs the entropy estimate and convex Sobolev inequalities.
Moreover, under simplifiying assumptions on the nonlinearities, 
the uniqueness of weak solutions is shown by using the $H^{-1}$ method,
the $E$-monotonicity technique of Gajewski, and the subadditivity of the Fisher
information.
\end{abstract}

\keywords{Cross diffusion, population dynamics, gradient-flow structure, 
entropy variables, nonlinear Aubin-Lions lemmas, exponential convergence 
to equilibrium, uniqueness of weak solutions.} 

\subjclass[2010]{35K51, 35K65, 35Q92, 92D25}  

\maketitle


\section{Introduction}\label{sec.intro}

In this paper, we analyze a class of multi-species population cross-diffusion systems
with volume-filling effects.
Such systems arise in various applications, like spatial segregation of interacting
species \cite{SKT79}, chemotactic cell migration in tissues \cite{Pai09}, 
and ion transport through membranes \cite{BSW12}. 
Our model class can be derived from
a system of random-walk master equations in the diffusion limit for a
large class of transition rates (see Appendix \ref{sec.deriv}). The key novelty
of our analysis is the identification of a new entropy or formal gradient-flow
structure and the treatment of non-standard degeneracies in the diffusion 
coefficients, which significantly extends previous results in \cite{Jue14}.

The diffusion systems have the form
\begin{equation}\label{eq}
  \pa_t u - \diver(A(u)\na u) = 0\quad\mbox{in }\Omega,\ t>0,
\end{equation}
with boundary and initial conditions
\begin{equation}\label{bic}
  (A(u)\na u)\cdot\nu = 0 \quad\mbox{on }\pa\Omega,\ t>0, \quad
	u(0)=u^0\quad\mbox{in }\Omega.
\end{equation}
Here, $\Omega\subset\R^d$ ($d\ge 1$) is a bounded domain,
$A(u)=(A_{ij}(u))\in\R^{n\times n}$ is a diffusion matrix,
the function $u=(u_1,\ldots,u_n):\Omega\times(0,\infty)\to\R^n$
is the vector of the proportions of the subpopulations, and 
$u_{n+1}=1-\sum_{i=1}^n u_i$ is the proportion of unoccupied space.
In particular, $0\le u_i\le 1$ for all $i=1,\ldots,n+1$.
The $i$th component of equations \eqref{eq} and \eqref{bic} has to be understood,
respectively, as
$$
  \pa_t u_i - \sum_{j=1}^n\diver(A_{ij}(u)\na u_j) = 0, \quad
	\sum_{j=1}^nA_{ij}(u)\na u_j\cdot\nu = 0.
$$
The boundary condition in \eqref{bic}
means that the physical or biological system is isolated;
the species cannot move through the boundary. 
For ease of presentation, we have neglected reaction and drift terms in the equations. 
We refer to Section \ref{sec.ext} for a discussion of more general models. 

The diffusion matrix in \eqref{eq} is given by
\begin{equation}\label{def.A}
  A_{ij}(u) = \delta_{ij} p_i(u)q_i(u_{n+1}) + u_ip_i(u)q_i'(u_{n+1})
	+ u_iq_i(u_{n+1})\frac{\pa p_i}{\pa u_j}(u), \quad i,j=1,\ldots,n,
\end{equation}
where $\delta_{ij}$ is the Kronecker delta. 
The nonnegative functions $p_i$ and $q_i$ model the transition
rates in the random-walk lattice model. The coefficients $A_{ij}$ are derived
from this model in the diffusion limit (see Section \ref{sec.deriv}).
The function $q_i$ vanishes when the cells are fully packed, i.e.\ if
$\sum_{i=1}^n u_i=1$, so $q_i(0)=0$ and $q_i$ is nondecreasing.
In the literature, several special models were considered and we review
now some of them.

\begin{example}\rm
{\em 1.\ Population-dynamics models.} The case $n=2$, 
$p_i(u)=a_{i0}+a_{i1}u_1+a_{i2}u_2$ and $q_i(u_3)=1$ for $i=1,2$ was suggested
by Shigesada, Kawasaki, and Teramoto \cite{SKT79} to describe the spatial
segregation of interacting populations and to study the
coexistence of two similar species. This model has attracted a lot of attention
in the literature. One of the first existence results is due to Kim \cite{Kim84}
who imposed some restrictions of the parameters $a_{ij}$. The tridiagonal case
$a_{21}=0$ was investigated, e.g., by Amann \cite{Ama89} and Le \cite{Le02}.
The first global existence result without any restriction on the diffusion
coefficients (except positivity) was achieved in \cite{GGJ03} in one space
dimension and in \cite{ChJu04,ChJu06} in several space dimensions.
The case of concave functions $p_1$ and $p_2$ was analyzed by Desvillettes 
et al.\ \cite{DLM14}, recently improved in \cite{DLMT14}. 
The $n$-species case with superlinear functions $p_i(u)$ was
investigated in \cite{Jue14}; also see \cite{BLMP09} for a so-called relaxed system.

{\em 2.\ Ion-transport models.} The case $p_i(u)=1$ for $i=1,\ldots,n$ and 
$q(u_{n+1})=u_{n+1}$ was employed to describe 
the motility of biological cells \cite{SLH09}
or the ion transport through nanopores \cite{BSW12}. The global existence of
bounded weak solutions was proved in \cite{BDPS10}. This result was generalized
in \cite{Jue14} to a class of nondecreasing functions including all
power functions $q(s)=s^\alpha$ with $\alpha\ge 1$. The
models in \cite{BSW12,SLH09} also include a drift term to account
for electric effects, and we discuss these extensions in Section \ref{sec.ext}.

{\em 3.\ Multi-species chemotaxis models.} A special case of the model in \cite{Pai09}
is given by $p_i(u)=1$ and $q(u_{n+1})=u_{n+1}$, similar to the ion-transport
model. In fact, the system in \cite{Pai09} contains additional terms which cannot
be described by \eqref{def.A} since the transition rates assumed in \cite{Pai09}
are not of the type $p_i(u)q_i(u_{n+1})$ (see \eqref{trans} in Appendix 
\ref{sec.deriv}) but they equal $p_i(u)+q_i(u_{n+1})$. We refer to the discussion
in Section \ref{sec.ext}.
\qed
\end{example}

In the model classes (i) and (ii), either $p_i\equiv 1$ or $q_i\equiv 1$. In contrast,
we investigate here a more general model class allowing for nonconstant 
functions $p_i$ {\em and}
$q_i$. A guiding example is system \eqref{eq} with diffusion coefficients
\eqref{def.A} and $p_i(u)=u_1+u_2$, $q_i(s)=s$ for $i=1,2$, which models
volume-filling effects in population systems. The diffusion matrix reads explicitly as 
\begin{equation}\label{ex.A}
  A(u) = \begin{pmatrix}
	u_1(1-u_1-u_2) + (u_1+u_2)(1-u_2) & u_1 \\
	u_2 & u_2(1-u_1-u_2) + (u_1+u_2)(1-u_1)
	\end{pmatrix}.
\end{equation}
We will show in Theorem \ref{thm.ex} that \eqref{eq} with this diffusion 
matrix possesses a global weak solution satisfying $0\le u(t)\le 1$ for all $t>0$.
In fact, Theorem \ref{thm.ex} is concerned with much more general models.

The analysis of system \eqref{eq} with diffusion matrix \eqref{def.A} 
faces a number of
mathematical challenges. First, the equations are {\em strongly coupled} such that
standard tools, like maximum principles and regularity theory, generally
do not apply. Second, the diffusion matrix is generally {\em not positive definite}
and thus, even the local-in-time existence of solutions is nontrivial.
Third, since the variables $u_i$ are proportions, we need to prove
{\em lower and upper bounds} for the solutions (here, $u_i\ge 0$ and 
$\sum_{i=1}^n u_i\le 1$), but maximum principle or invariant region
methods seemingly do not apply.
Fourth, the parabolic system may be {\em degenerate} (e.g.\ like in \eqref{ex.A}
for $u=(0,1)$ or $u=(1,0)$). 

Some of these difficulties have been dealt with in, e.g., \cite{Jue14}
under the assumption that
the diffusion system has a formal entropy or gradient-flow structure, i.e.,
there exists a convex functional $h:\D\to\Omega$ (called entropy density), 
where $\D\subset\R^n$, such that the matrix $B=A(u)h''(u)^{-1}$ 
is positive semi-definite and \eqref{eq} can be written as
\begin{equation}\label{eq.B}
  \pa_t u - \diver(B\na h'(u)) = 0,
\end{equation}
where $h'(u)$ and $h''(u)$ are the Jacobian and Hessian of $h$, respectively.
This formulation has two advantages: First, $H[u]=\int_\Omega h(u)dx$ is a 
Lyapunov functional along solutions $u(t)$ to \eqref{eq}-\eqref{bic},
\begin{equation}\label{dHdt}
  \frac{dH}{dt}[u(t)] = \int_\Omega h'(u)\cdot\pa_t u dx
	= -\int_\Omega \na u:h''(u)A(u)\na u
	= -\int_\Omega \na w:B\na w dx \le 0.
\end{equation}
where $w=h'(u)$ are called entropy variables.
In particular, this yields a gradient-type estimate for $w$ or $u$.
Second, if $h'$ is invertible on $\D$ (see Lemma \ref{lem.h}), 
the original variable $u=(h')^{-1}(w)$
is an element of $\D$. Thus, if $\D$ is a bounded domain, we obtain lower
and upper bounds for $u$ without the use of a maximum principle. In our
situation, we define
$\D = \{u\in\R^n:u_i>0$ for $i=1,\ldots,n$, $\sum_{j=1}^n u_j<1\}$
such that $u_i$ is positive and bounded by one.

There remain still two issues for systems with diffusion coefficients \eqref{def.A}. 
The first one is to identify a suitable entropy
density $h$, the second one is the possible degeneracy. In the example
given by \eqref{ex.A}, we choose
\begin{align*}
  h(u) &= \sum_{i=1}^2 u_i(\log u_i-1) + (1-u_1-u_2)(\log(1-u_1-u_2)-1) \\
	&\phantom{xx}{}+ (u_1+u_2)(\log(u_1+u_2)-1) + 4,
\end{align*}
which yields the matrix
$$
  B = A(u)h''(u)^{-1}	= \begin{pmatrix}
	u_1(u_1+u_2)(1-u_1-u_2) & 0 \\
	0 & u_2(u_1+u_2)(1-u_1-u_2) \end{pmatrix}.
$$
At least one eigenvalue of $B$ vanishes if 
$u\in\pa\D=\{u_1=0$, $u_2=0$, $1-u_1-u_2=0\}$. In this sense, system \eqref{eq}
is called to be of degenerate type.
Generally, systems \eqref{eq} are always of degenerate type since $q(0)=0$. 
Here, we develop a technique to deal with such a degeneracy.

We overcome these issues by developing two main ideas.
Our first key idea is the identification of a class of functions $p_i$ and $q_i$
for which we are able to define a novel entropy density. The second idea is
the extension of the Aubin-Lions compactness lemma to non-standard degenerate cases.
In the following, we detail these concepts.

We make the following structural hypotheses on the functions $p_i$ and $q_i$:
There exist functions $q:[0,1]\to\R$, $\chi:\overline{\D}\to\R$ and a number 
$\gamma>0$ such that for all $i=1,\ldots,n$,
\begin{align}
  & q(s) := q_i(s) > 0, \quad q'(s)\ge\gamma q(s)\mbox{ for }s\in(0,1), 
	\quad q(0)=0, \quad
	q\in C^3([0,1]), \label{hp.q} \\
	& p_i(u) = \exp\left(\frac{\pa\chi(u)}{\pa u_i}\right)\mbox{ for }u\in\D, \quad
	\chi\ge\mbox{ is convex on }\overline{\D}, \quad \chi\in C^3(\overline{\D}).
  \label{hp.p}
\end{align}
Examples of functions $q$ and $p_i$ satisfying these conditions are given
in Remark \ref{rem.ex}. We define the entropy density
\begin{equation}\label{h}
  h(u) = \sum_{i=1}^n(u_i\log u_i-u_i+1) + \int_a^{u_{n+1}}\log q(s)ds + \chi(u),
	\quad u\in\D,
\end{equation}
where $a\in(0,1]$ is such that $\int_a^b\log q(s)ds\ge 0$ for all $b\in(0,1)$,
namely 
\begin{equation}\label{def.a}
  a = \begin{cases}
  1 & \mbox{if }q(1)\leq 1,\\
  q^{-1}(1) & \mbox{if }q(1)>1.
\end{cases} 
\end{equation}
Notice that we require that all functions $q_i$ are the same and
that $p_i$ possesses a particular structure. It seems to be difficult
to treat more general cases, except imposing other conditions.

Surprisingly, system \eqref{eq} with \eqref{def.A} partially decouples in the
entropy variables. Indeed, we may write
 the following formal ``generalized'' gradient-flow formulation
$$
  \pa_t u_i - \diver\left(q(u_{n+1})^2\na \exp\frac{\pa h}{\pa u_i}\right) = 0,
	\quad i=1,\ldots,n,
$$
which makes the degenerate structure more apparent than \eqref{eq}.
We also note that if $q\equiv 1$, we obtain 
$\pa_t u_i = \Delta(\exp(\pa h/\pa u_i))=\Delta(u_ip_i(u))$. 
This structure was exploited in \cite{DLM14,DLMT14}.

A computation, which is made rigorous below, shows that the following
entropy inequality holds:
\begin{equation}\label{1.edi}
  \frac{d}{dt}\int_\Omega h(u)dx
	+ c\int_\Omega\left(q(u_{n+1})^2\sum_{i=1}^n |\na u_i^{1/2}|^2
	+ |\na q(u_{n+1})^{1/2}|^2\right)dx \le 0,
\end{equation}
where $c>0$ is some constant.
We wish to deduce $L^2$ gradient estimates for $u_1,\ldots,u_n$, which are needed
to apply the Aubin-Lions compactness lemma for a suitable approximated system.
However, because of the degeneracy of $q$ (i.e.\ $q(0)=0$), 
these estimates are nontrivial.
We overcome this problem by proving two compactness results. 

The first compactness result essentially states 
that if we have (i) uniform gradient estimates for the bounded sequences $(\xi_\eps)$
and $(\xi_\eps\eta_\eps)$, (ii) a uniform estimate for the (discrete) time derivative
of $\eta_\eps$, and (iii) the strong convergence $\xi_\eps\to\xi$ in $L^2$, 
then up to a subsequence, $\xi_\eps f(\eta_\eps)\to\xi f(\eta)$ in $L^2$
for any continuous function $f$ (Lemma \ref{lem.aubin1}). 
If $\xi_\eps$ were strictly positive, the statement
would be a consequence of the usual Aubin-Lions lemma \cite{Sim87}. Here, we are
able to deal with functions $\xi_\eps$ which may vanish locally. The case
$f(s)=s$ was considered in \cite{BDPS10,Jue14}.

The second compactness result is a generalization of the Aubin-Lions-Dubinski\u{\i} 
lemma; see, e.g., \cite{CJL14,Mou14}. It states that if a bounded sequence
$(u_\eps)$ possesses a uniform estimate for the (discrete) time derivative
and a uniform gradient estimate for $Q(u_\eps)$ and $Q'(u_\eps)$ for some
nonnegative convex increasing function $Q$, then up to a subsequence,
$u_\eps\to u$ strongly in $L^2$ (Lemma \ref{lem.aubin2}). 
This result is complementary to the nonlinear
Aubin-Lions lemma stated in \cite{Mou14} and generalizes the lemma in
\cite{CJL14} stated for $Q(s)=s^\alpha$ with $\alpha>1$.

Based on the above ideas, we prove three results. First, we show the global-in-time
existence of bounded weak solutions to \eqref{eq}-\eqref{def.A} satisfying the
entropy inequality \eqref{1.edi} (Theorem \ref{thm.ex}). 
Second, the entropy inequality and a convex Sobolev
inequality allow us to show that $u_{n+1}(t)$ converges to the constant
steady state in the $L^2$ sense. Moreover, if $q$ is strictly positive, this
convergence also holds for $u_1(t),\ldots,u_n(t)$ (Theorem \ref{thm.conv}). 
Third, if $p_i\equiv 1$ for all $i=1,\ldots,n$, there is a unique weak solution
to \eqref{eq}-\eqref{def.A}. The proof combines the $H^{-1}$ method and the 
$E$-monotonicity technique of Gajewski \cite{Gaj94}. 

The paper is organized as follows. The main results are stated and
commented in Section \ref{sec.main}. Section \ref{sec.aux} is devoted to the
proof of some auxiliary results, like the positive semi-definiteness of
the matrix $h''(u)A(u)$ and the Aubin-Lions compactness lemmas. The three
main theorems are proved in Sections \ref{sec.ex}, \ref{sec.conv}, and 
\ref{sec.unique}, respectively.
Extensions of our model are discussed in Section \ref{sec.ext}.
Appendix A is concerned with the formal derivation of \eqref{eq} from a
random-walk lattice model.


\section{Main results}\label{sec.main}

We state our main theorems and detail the ideas of the proofs.
The first theorem is concerned with the global existence of bounded weak solutions.
Recall that
\begin{equation}\label{D}
  \D = \Big\{u=(u_1,\ldots,u_n)\in\R^n:u_i>0\mbox{ for }i=1,\ldots,n,\
	\sum_{j=1}^n u_j<1\Big\}.
\end{equation}

\begin{theorem}[Global existence]\label{thm.ex}
Let $T>0$, let $u^0:\Omega\to\D$ be a measurable function such that
$h(u^0)\in L^1(\Omega)$,
and let $A(u)$ be given by \eqref{def.A}.
Assume that hypotheses \eqref{hp.q} and \eqref{hp.p} hold. Then: 

\begin{enumerate}[\rm (i)]
\item There exists
a weak solution $u:\Omega\times(0,T)\to\overline{\D}$ to \eqref{eq}-\eqref{bic}
satisfying $u_i\ge 0$, $u_{n+1} := 1-\sum_{i=1}^n u_i\ge 0$, and
\begin{align}
  & q(u_{n+1})^{1/2},\ u_i^{1/2}q(u_{n+1})^{1/2},\ u_ip_i(u)q(u_{n+1})^{1/2}
	\in L^2(0,T;H^1(\Omega)), \label{reg1} \\
	& u_i\in L^\infty(0,T;L^\infty(\Omega)),\quad
	\pa_t u_i\in L^2(0,T;H^1(\Omega)'), \quad i=1,\ldots,n. \label{reg2}
\end{align}
The function $u$ satisfies the weak formulation
\begin{align}\label{weak}
  \sum_{i=1}^n\int_0^T\langle\pa_t u_i,\phi_i\rangle dt
	&+ \sum_{i=1}^n\int_0^T\int_\Omega\big[q(u_{n+1})^{1/2}
	\na\big(u_ip_i(u)q(u_{n+1})^{1/2}\big) \\
	&{}- 3u_ip_i(u)q(u_{n+1})^{1/2}\na q(u_{n+1})^{1/2}\big]\cdot\na\phi_i dxdt = 0
	\nonumber
\end{align}
for all $\phi_1,\ldots,\phi_n\in L^2(0,T;H^1(\Omega))$, and 
$u(0)=u^0$ in the sense of $H^1(\Omega)'$. Here, $\langle\cdot,\cdot\rangle$
denotes the duality product of $H^1(\Omega)'$ and $H^1(\Omega)$. 

\item The following entropy inequality holds:
\begin{align}\label{ei}
  \int_\Omega h(u(t))dx &+ c_0\int_0^t\int_\Omega
	\left(\sum_{i=1}^n q(u_{n+1})^2|\na u_{i}^{1/2}|^2 + |\na q(u_{n+1})^{1/2}|^2\right)
	dxdt \\
	&\le \int_\Omega h(u^0)dx, \nonumber
\end{align}
where $c_0=4p_0\min\{1,\delta\}>0$ with $p_0$ and $\delta$ being defined 
in \eqref{delta} below.

\item If $\int_0^b|\log q(s)|ds=+\infty$ for all $0<b<1$ then $u_{n+1}>0$
a.e.\ in $\Omega\times(0,T)$.
\end{enumerate}
\end{theorem}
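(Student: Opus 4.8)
The plan is to prove the three assertions by a standard approximation-compactness procedure built around the entropy structure identified in the introduction.

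\emph{Step 1: a regularized time-discrete scheme.} The whole argument is carried out in the entropy variables $w=h'(u)$. By Lemma~\ref{lem.h} the map $h'$ is a bijection of $\D$ onto $\R^n$, so that $u:=(h')^{-1}(w)$ automatically lies in $\D$; this is the only source of the pointwise bounds $0<u_i<1$ (and $u_{n+1}>0$), no maximum principle being available. Fix $T>0$, $N\in\N$, $\tau=T/N$, $\eps>0$ and $m\in\N$ with $m>d/2$. Given $u^{k-1}\in L^\infty(\Omega;\overline{\D})$ (with $u^0$ the initial datum), I seek $w^k\in H^m(\Omega;\R^n)$ such that
\[
  \frac1\tau\int_\Omega\big(u(w^k)-u^{k-1}\big)\cdot\phi\,dx+\int_\Omega\na\phi:B(u(w^k))\,\na w^k\,dx+\eps\int_\Omega\Big(\sum_{|\alpha|=m}D^\alpha w^k\cdot D^\alpha\phi+w^k\cdot\phi\Big)dx=0
\]
for all $\phi\in H^m(\Omega;\R^n)$, where $B=A\,(h'')^{-1}$. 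The scheme is solved by the Leray--Schauder fixed-point theorem: for frozen coefficients the problem is linear and coercive on $H^m$ (coercivity coming from the $\eps$-term and the positive semidefiniteness of $B$ proved in Section~\ref{sec.aux}), hence uniquely solvable by Lax--Milgram, and the embedding $H^m\hookrightarrow C^0(\overline{\Omega})$ makes $u(w^k)\in\overline{\D}$ well defined and supplies the compactness and a~priori bound that Leray--Schauder requires.

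\emph{Step 2: discrete entropy inequality and uniform estimates.} Testing with $\phi=w^k=h'(u^k)$ and using the convexity inequality $h'(u^k)\cdot(u^k-u^{k-1})\ge h(u^k)-h(u^{k-1})$ gives
\[
  \int_\Omega h(u^k)\,dx+\tau\int_\Omega\na w^k:B(u^k)\,\na w^k\,dx+\eps\tau\|w^k\|_{H^m}^2\le\int_\Omega h(u^{k-1})\,dx.
\]
Summing over $k$, using that the normalization of $h$ makes it nonnegative on $\D$, and inserting the quantitative dissipation bound of Section~\ref{sec.aux} --- $\na w:B\na w=\na u:h''(u)A(u)\na u\ge c_0\big(\sum_i q(u_{n+1})^2|\na u_i^{1/2}|^2+|\na q(u_{n+1})^{1/2}|^2\big)$, which is where hypotheses \eqref{hp.q} and \eqref{hp.p} enter --- I obtain estimates uniform in $\tau$ and $\eps$: $0\le u_i\le1$; $q(u_{n+1})^{1/2}$, $u_i^{1/2}q(u_{n+1})^{1/2}$ and $u_ip_i(u)q(u_{n+1})^{1/2}$ bounded in $L^2(0,T;H^1(\Omega))$ (the last two obtained by combining the dissipation estimate with the identity $q^{1/2}\na(u_ip_i)^{1/2}-(u_ip_i)^{1/2}\na q^{1/2}=q\,\na(u_ip_i/q)^{1/2}$ and with $p_i\ge p_0>0$, $\chi\in C^3(\overline{\D})$); $\eps\|w\|_{L^2(0,T;H^m)}^2$ bounded; and, reading the scheme as a time-discrete form of \eqref{weak}, $\pa_t u_i$ bounded in $L^2(0,T;H^1(\Omega)')$.

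\emph{Step 3: compactness and passage to the limit.} This is the main obstacle: since $q(0)=0$, the gradient control degenerates on $\{u_{n+1}=0\}$ and no uniform $L^2$ bound on $\na u_i$ is available, so the classical Aubin--Lions lemma does not apply. Instead I use Lemma~\ref{lem.aubin1}, with $\xi_\eps=q(u_{n+1})^{1/2}$ (and with the other entropy-controlled products) and $\eta_\eps=u_i$, together with Lemma~\ref{lem.aubin2}, applied with a convex increasing function $Q$ built from $q$, to extract a subsequence along which $u_i$, $q(u_{n+1})^{1/2}$ and $u_ip_i(u)q(u_{n+1})^{1/2}$ converge strongly in $L^2(\Omega\times(0,T))$; being bounded in $L^2(0,T;H^1(\Omega))$, their gradients then converge weakly to the gradients of the limits. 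Passing to the limit $\eps\to0$, $\tau\to0$ in the discrete weak formulation (products of a weak and a strong convergence) produces a weak solution with the regularity \eqref{reg1}--\eqref{reg2} satisfying \eqref{weak} and $u(0)=u^0$ in $H^1(\Omega)'$, and the entropy inequality \eqref{ei} follows from the summed discrete inequality by weak lower semicontinuity of $u\mapsto\int_\Omega h(u)\,dx$ and of the convex dissipation functional.

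\emph{Step 4: strict positivity.} Since $\sum_i(u_i\log u_i-u_i+1)$ and $\chi(u)$ are bounded on $\overline{\D}$, the bound $\int_\Omega h(u(t))\,dx\le\int_\Omega h(u^0)\,dx<\infty$ forces $\int_\Omega G(u_{n+1}(\cdot,t))\,dx<\infty$, where $G(b):=\int_a^b\log q(s)\,ds\ge0$ by the choice \eqref{def.a} of $a$. If $\int_0^b|\log q(s)|\,ds=+\infty$ for every $b\in(0,1)$, then $G(b)\to+\infty$ as $b\to0^+$, so $\int_\Omega G(u_{n+1})\ge|\{u_{n+1}<\delta\}|\,G(\delta)$ for every $\delta>0$ would be infinite if $\{u_{n+1}=0\}$ had positive measure; hence $u_{n+1}>0$ a.e.\ in $\Omega\times(0,T)$.
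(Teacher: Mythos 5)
Your overall architecture (entropy variables, Leray--Schauder for a regularized time-discrete scheme, the entropy dissipation bound from Lemma \ref{lem.HA}, the two generalized Aubin--Lions lemmas, lower semicontinuity for \eqref{ei}, and the blow-up of $\int_0^b\log q$ for part (iii)) matches the paper's. However, there is a genuine gap in Steps 1 and 3: your regularization has only one level, $\eps(\text{$H^m$ part}+L^2\text{ part})$, which is essentially the scheme of \cite{Jue14}, and it does not suffice here. The paper's scheme adds a term $\tau^2\int_\Omega(\phi\cdot w^k+\na\phi:\na w^k)\,dx$, independent of $\eps$, and this second level is used twice in an essential way. First, the discrete entropy inequality then yields $\tau^3\sum_k\|w^k\|_{H^1(\Omega)}^2\le C$ uniformly in $\eps$, hence, via the boundedness of $(h'')^{-1}$, an $\eps$-uniform bound $\|u^{(\eps)}\|_{H^1(\Omega)}\le C\tau^{-3/2}$ at fixed $\tau$; this is what gives strong $L^2$ and a.e.\ convergence of the full vector $u^{(\eps)}$ as $\eps\to 0$ and allows the nonlinearities $p_i(u^{(\eps)})$, $q(u_{n+1}^{(\eps)})$ to be identified in the limit. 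In your scheme the only $\eps$-uniform spatial control on $u^{(\eps)}$ comes from the degenerate dissipation terms $\na q(u_{n+1}^{(\eps)})^{1/2}$ and $q(u_{n+1}^{(\eps)})^{1/2}\na(u_i^{(\eps)})^{1/2}$, which give no compactness for $u^{(\eps)}$ on the set where $q$ vanishes, so the limits of the products $u_i^{(\eps)}p_i(u^{(\eps)})q(u_{n+1}^{(\eps)})^{1/2}$ cannot be identified. Second, if you instead send $\eps$ and $\tau$ to zero together, the discrete time derivative $\tau^{-1}(u^k-u^{k-1})$ still contains the term $\eps\sum_{|\alpha|=m}(-1)^{|\alpha|}D^{2\alpha}w^k$, which is controlled only in $H^{-m}$, not in $H^1(\Omega)'$; Lemmas \ref{lem.aubin1} and \ref{lem.aubin2} require the $H^1(\Omega)'$ bound on the discrete time derivative (and their proofs genuinely use it, e.g.\ through $\|Q'(u^{(\tau)})\phi_+\|_{H^1(\Omega)}$), so they do not apply. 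Your assertion that $\pa_t u_i$ is bounded in $L^2(0,T;H^1(\Omega)')$ ``by reading the scheme as a time-discrete form of \eqref{weak}'' silently discards the regularization term; the paper's two-level construction exists precisely to remove the $H^m$ part first (at fixed $\tau$), keep an $H^{-1}$-compatible $H^1$ part that vanishes like $O(\tau)$, and only then apply the compactness lemmas.

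A secondary point: in Step 3 you claim that $u_i$ itself converges strongly in $L^2(\Omega\times(0,T))$. This cannot be extracted from the available estimates, since the only gradient control on $u_i$ is weighted by $q(u_{n+1})$, which may vanish; the paper explicitly states that strong convergence of $(u_i^{(\tau)})$ is not to be expected. Fortunately it is not needed: only $u_{n+1}$ (via Lemma \ref{lem.aubin2} with $Q(s)=\int_0^s q(\sigma)^{1/2}d\sigma$) and the products $f(u^{(\tau)})q(u_{n+1}^{(\tau)})^{1/2}$ with $f$ continuous (via Lemma \ref{lem.aubin1}) converge strongly, and these suffice to pass to the limit in \eqref{weak}.
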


\begin{remark}\label{rem.ex}\rm
We present examples of functions $q$ and $p_i$ satisfying \eqref{hp.q} and
\eqref{hp.p}, respectively.
Hypothesis \eqref{hp.q} is satisfied by $q(s)=s^\alpha$ for $s\in[0,1]$,
where $\alpha\ge 1$. Indeed, the inequality $q'(s)\ge\gamma q(s)$ holds for all
$s\in[0,1]$ with $\gamma:=\alpha$.
Another example class is given by $q(s)=\exp(f(s))-1$ with $f(0)=0$ and
$f'(s)\ge\gamma>0$ for $s\in[0,1]$.
A concrete example is $q(s)=\exp(s^\alpha)-1$ with $0<\alpha\le 1$.
A third example is $q(s)=\exp(-s^{-\alpha})$ with $\alpha>0$ which
satisfies the assumption stated in Theorem \ref{thm.ex}, part (iii).

Hypothesis \eqref{hp.p} is satisfied by every function
$p_i(u)=\widetilde p_i(u_i)$, where $\widetilde p_i\in C^1([0,1])$
is strictly positive and nondecreasing. Indeed, let us define
$$
  \chi_i(s) = \int_0^s\log \widetilde p_i(\sigma)d\sigma + k, \quad
	\chi(u) = \sum_{j=1}^n \chi_j(u_j)
$$
for $s\in[0,1]$, $i=1,\ldots,n$, and $u=(u_1,\ldots,u_n)\in\D$. Here,
$k>0$ is such that $\chi_i\ge 0$ in $[0,1]$. Since $\widetilde p_i$ is
strictly positive and nondecreasing in $[0,1]$, it follows that
$\chi''(u)$, given by
$$
  \frac{\pa^2\chi}{\pa u_i\pa u_j}(u) 
	= \delta_{ij}\frac{\widetilde p_i'(u_i)}{\widetilde p_i(u_i)}, \quad
	i,j=1,\ldots,n,
$$
is positive semi-definite and $\chi:\overline{\D}\to[0,\infty)$ is convex. 
Furthermore, $\exp(\pa\chi/\pa u_i)=\widetilde p_i(u_i)=p_i(u)$ for $u\in\D$.

Another example is given by $p_i(u)=(\sum_{j=1}^n a_ju_j)^{a_i}$ with
$a_i\ge 0$, $i=1,\ldots,n$. Indeed, the function $\chi(u)=\sum_{j=1}^n a_ju_j
(\log(\sum_{j=1}^n a_ju_j)-1)$ is convex on $\overline{\D}$ and satisfies
$\exp(\pa\chi/\pa u_i)=\exp(a_i\log(\sum_{j=1}^n a_ju_j))=p_i(u)$.
This example corresponds to the diffusion matrix \eqref{ex.A} for $n=2$
and $a_1=a_2=1$.
\qed
\end{remark}

The proof of Theorem \ref{thm.ex} is based on an approximation and
regularization of \eqref{eq}. More precisely, we consider the semi-discrete
system
$$
  \frac{1}{\tau}(u(w^k)-u(w^{k-1})) = \diver(B(w^k)\na w^k) + \tau^2(\Delta w^k+w^k)
	+ \eps\tau^2\sum_{2\le |\alpha|\le m}(-1)^{|\alpha|-1}D^{2\alpha}w^k
$$
with homogeneous Neumann boundary conditions,
where $\tau>0$, $\eps>0$, $m>d/2$, $u(w)=(h')^{-1}(w)$, $w^k$ approximates $w(k\tau)$,
and $D^{2\alpha}$ is a partial derivative of order $2|\alpha|$, with 
$\alpha\in\N_0^d$ being a multiindex. Compared to \cite{Jue14}, we need {\em two}
regularization levels: the $H^1$ regularization given by $\Delta w^k+w^k$ and
the $H^m$ regularization given by the sum over $\alpha$. The second regularization
is needed to obtain approximate $L^\infty$ solutions (observe that $H^m(\Omega)
\hookrightarrow L^\infty(\Omega)$), while the first one allows us to interpret
the weak formulation in the larger space $H^{-1}$ instead of $H^{-m}$. This
is needed to apply the generalized Aubin-Lions Lemmas \ref{lem.aubin1} and
\ref{lem.aubin2}, for which $H^{-1}$ is required. 

The entropy inequality \eqref{1.edi}, adapted to the above problem, yields uniform
$H^m$ estimates. Hence, applying the Leray-Schauder fixed-point theorem,
we obtain the existence of semi-discrete $H^m$ solutions. 
The same entropy inequality provides a priori estimates uniform in $\tau$ and $\eps$. 
First, we perform the limit $\eps\to 0$,
then the limit $\tau\to 0$. The latter limit is highly nontrivial since
we have only an $L^2$ bound for $q(u_{n+1})\na u_i^{1/2}$, and $q(u_{n+1})=0$ at
$u_{n+1}=0$ is possible. This degeneracy will be overcome by the compactness result
in Lemma \ref{lem.aubin1}.

The second result is about the large-time behavior of the solutions to
the constant steady state given by
$$
  u_i^\infty = \frac{1}{|\Omega|}\int_\Omega u^0_i dx, \ i=1,\ldots,n, \quad
	u_{n+1}^\infty = 1 - \sum_{i=1}^n u_i^\infty.
$$
We are able to prove exponential convergence of $u_{n+1}(t)$ and,
under an additional assumption on $q$, also of $u_1(t),\ldots,u_n(t)$.

\begin{theorem}[Convergence to steady state]\label{thm.conv}
Let $\Omega$ be convex, $u^0\in L^1(\Omega;\D)$, 
let $A(u)$ be given by \eqref{def.A}, and
assume that \eqref{hp.q} and \eqref{hp.p} hold. Furthermore, let
$q\in C^3([0,1])$ be such that $q'$ is strictly positive and $q/q'$ is
concave on $(0,1)$. Let $u:\Omega\times(0,T)\to\D$ be a weak
solution to \eqref{eq}-\eqref{bic} in the sense of Theorem \ref{thm.ex}.
Then 
\begin{equation}\label{conv.unp1}
  \|u_{n+1}(t)-u_{n+1}^\infty\|_{L^2(\Omega)} 
	\le C_1 e^{-\lambda_1 t}, \quad t\ge 0,
\end{equation}
where $C_1 = (2/\gamma)^{1/2}\|h^*(u^0|u^\infty)\|_{L^1(\Omega)}^{1/2}$ and 
$\lambda_1 = c_0q_1/(4c_S)$,
$h^*$ is the relative entropy density (see \eqref{rel.ent}), 
$q_1:=\min_{s\in[0,1]}q'(s)>0$, $c_0>0$ is defined in Theorem \ref{thm.ex}, and 
$c_S>0$ is the constant of the convex Sobolev inequality in Lemma \ref{lem.csi}.
Moreover, if $q_0:=\min_{s\in[0,1]}q(s)>0$, 
\begin{equation}\label{conv.ui}
  \|u_{i}(t)-u_{i}^\infty\|_{L^2(\Omega)} \le C_1 e^{-\lambda_2 t}, \quad t\ge 0,
	\quad i=1,\ldots,n,
\end{equation}
where $\lambda_2=c_0q_0/c_L$ and $c_L>0$ is the constant in the logarithmic
Sobolev inequality (see, e.g., \cite[Lemma~1]{DeFe14}).
\end{theorem}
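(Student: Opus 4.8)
The plan is a relative-entropy/entropy-dissipation argument. Let $h^*(\cdot\,|\,u^\infty)$ be the relative entropy density from \eqref{rel.ent}, i.e.\ the Bregman divergence $h^*(u|u^\infty)=h(u)-h(u^\infty)-h'(u^\infty)\cdot(u-u^\infty)\ge 0$ of the convex function $h$, and set $H^*[u\,|\,u^\infty]=\int_\Omega h^*(u|u^\infty)\,dx$. Mass is conserved (take $\phi_i\equiv1$ in \eqref{weak}), so $\int_\Omega u_i(t)\,dx=|\Omega|u_i^\infty$; hence $H^*[u(t)|u^\infty]=\int_\Omega h(u(t))\,dx-|\Omega|h(u^\infty)$ differs from the entropy only by a constant, and \eqref{ei} holds verbatim with $\int_\Omega h(u)\,dx$ replaced by $H^*[u|u^\infty]$. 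Writing $h=h_{\mathrm B}+\Psi\circ(u\mapsto u_{n+1})+\chi$ with $h_{\mathrm B}(u)=\sum_i(u_i\log u_i-u_i+1)$ and $\Psi(s)=\int_a^s\log q(\sigma)\,d\sigma$, each summand is convex in $u$ (for $\Psi$ because $\Psi''=q'/q\ge\gamma>0$ and $u\mapsto u_{n+1}$ is affine, for $\chi$ by \eqref{hp.p}), so $h^*$ splits into a sum of three nonnegative Bregman divergences; using mass conservation once more to drop the linear term,
\[
  E_{n+1}(t):=\int_\Omega\big(\Psi(u_{n+1}(t))-\Psi(u_{n+1}^\infty)\big)\,dx
  \;\le\; H^*[u(t)\,|\,u^\infty]\;\le\;H^*[u^0\,|\,u^\infty]\,.
\]

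For \eqref{conv.unp1} I would prove that $E_{n+1}$ decays exponentially. Since $\Psi''\ge\gamma$, a Taylor expansion plus mass conservation gives $\tfrac{\gamma}{2}\|u_{n+1}(t)-u_{n+1}^\infty\|_{L^2(\Omega)}^2\le E_{n+1}(t)$, so it suffices to show $E_{n+1}(t)\le\|h^*(u^0|u^\infty)\|_{L^1(\Omega)}e^{-2\lambda_1 t}$. The assumption that $q/q'=1/\Psi''$ is concave is precisely the Bakry--\'Emery-type condition under which the convex Sobolev inequality of Lemma~\ref{lem.csi} applies to the scalar function $\Psi$, giving
\[
  E_{n+1}(t)\le c_S\int_\Omega\Psi''(u_{n+1})|\nabla u_{n+1}|^2\,dx
  =4c_S\int_\Omega\frac{|\nabla q(u_{n+1})^{1/2}|^2}{q'(u_{n+1})}\,dx
  \le\frac{4c_S}{q_1}\int_\Omega|\nabla q(u_{n+1})^{1/2}|^2\,dx\,,
\]
because $|\nabla q(u_{n+1})^{1/2}|^2=\tfrac14\,q'(u_{n+1})\,\Psi''(u_{n+1})\,|\nabla u_{n+1}|^2$ and $q'\ge q_1$. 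Inserting this into the relative-entropy form of \eqref{ei} yields $\frac{d}{dt}H^*[u(t)|u^\infty]\le-\lambda_1 E_{n+1}(t)$ with $\lambda_1=c_0q_1/(4c_S)$. To close this into a Gr\"onwall inequality for $E_{n+1}$ itself I would test the equation for $u_{n+1}$ — obtained by summing the system and using the ``generalized'' gradient-flow form $\pa_t u_i=\diver\big(q(u_{n+1})^2\na\exp(\pa h/\pa u_i)\big)$, so that $\pa_t u_{n+1}=-\diver(q(u_{n+1})^2\na v)$ with $v=\sum_i u_ip_i(u)/q(u_{n+1})$ — against $\Psi'(u_{n+1})-\Psi'(u_{n+1}^\infty)$. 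This gives $\frac{d}{dt}E_{n+1}(t)=-4\int_\Omega\big(\sum_i u_ip_i(u)\big)|\nabla q(u_{n+1})^{1/2}|^2\,dx+R(t)$ with a cross term $R(t)=\int_\Omega\nabla q(u_{n+1})\cdot\nabla\big(\sum_i u_ip_i(u)\big)\,dx$; combining with \eqref{ei} and absorbing $R$ into the full dissipation $c_0\int_\Omega(\sum_i q^2|\nabla u_i^{1/2}|^2+|\nabla q^{1/2}|^2)$ leads to $\frac{d}{dt}E_{n+1}(t)+2\lambda_1 E_{n+1}(t)\le0$, hence $E_{n+1}(t)\le E_{n+1}(0)e^{-2\lambda_1 t}\le\|h^*(u^0|u^\infty)\|_{L^1(\Omega)}e^{-2\lambda_1 t}$, which together with the first estimate of this paragraph gives \eqref{conv.unp1}.

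For \eqref{conv.ui} in the case $q_0=\min_{[0,1]}q>0$, the diffusive dissipation is no longer degenerate: $q(u_{n+1})^2\ge q_0^2$, so \eqref{ei} controls $\tfrac{c_0q_0^2}{4}\int_\Omega|\nabla u_i|^2/u_i\,dx$ for each $i$. The logarithmic Sobolev inequality on the convex domain $\Omega$ (see \cite{DeFe14}), applied to $u_i$ after normalization by the conserved mass $|\Omega|u_i^\infty$, gives $\int_\Omega u_i\log(u_i/u_i^\infty)\,dx\le c_L\int_\Omega|\nabla u_i|^2/u_i\,dx$, and this left-hand side equals the $i$-th Boltzmann piece of $H^*[u(t)|u^\infty]$ by mass conservation. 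Setting $E_{\mathrm B}(t)=\sum_i\int_\Omega u_i\log(u_i/u_i^\infty)\,dx$ and feeding this into \eqref{ei} as before yields the closed inequality $\frac{d}{dt}E_{\mathrm B}(t)+\tfrac{2c_0q_0}{c_L}E_{\mathrm B}(t)\le0$, hence exponential decay of the relative Boltzmann entropy with rate $\lambda_2=c_0q_0/c_L$; the Csisz\'ar--Kullback--Pinsker inequality (or simply $(\log)''=1/u_i\ge1$ on $[0,1]$ together with mass conservation) converts this to \eqref{conv.ui}.

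The main obstacle is the degeneracy: the dissipation in \eqref{ei} only ever sees $|\nabla q(u_{n+1})^{1/2}|^2$ and the degenerate quantity $q(u_{n+1})^2|\nabla u_i^{1/2}|^2$, never $|\nabla u_{n+1}|^2$ or $|\nabla u_i|^2$. The scalar entropy $\Psi$ with $\Psi''=q'/q$, together with the Bakry--\'Emery condition ``$q/q'$ concave'', is tailored precisely so that the convex Sobolev inequality matches this degenerate dissipation. The genuinely delicate point is the closing of the Gr\"onwall inequality for $E_{n+1}$: because $u_{n+1}\to u_{n+1}^\infty$ does \emph{not} imply $u\to u^\infty$ (it only forces $\sum_i u_i\to\sum_i u_i^\infty$), one cannot simply appeal to the decay of the full relative entropy and must instead bound the cross term $R$ — arising from $\nabla(\sum_i u_ip_i(u))$ — against the available dissipation, which is exactly where the structural hypotheses \eqref{hp.q}--\eqref{hp.p} and the extra assumptions on $q$ enter. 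For the components $u_1,\dots,u_n$, the degeneracy $q(0)=0$ destroys the $u_i$-dissipation wherever $u_{n+1}=1$, which is why the non-degeneracy assumption $q_0>0$ is needed there.
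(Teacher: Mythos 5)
Your overall strategy coincides with the paper's: split the relative entropy into the three nonnegative Bregman pieces $h_1^*,h_2^*,h_3^*$, apply the convex Sobolev inequality to $\Psi(s)=\int_a^s\log q(\sigma)\,d\sigma$ (whose Bakry--\'Emery condition is exactly ``$1/\Psi''=q/q'$ concave'') so that the entropy piece $h_2^*$ is dominated by the dissipation term $|\na q(u_{n+1})^{1/2}|^2$, convert back to $L^2$ via $\Psi''\ge\gamma$, and use the logarithmic Sobolev inequality under $q_0>0$ for the components. The genuine gap is in how you close the Gr\"onwall argument. You propose to derive a differential inequality for $E_{n+1}(t)=\int_\Omega h_2^*\,dx$ \emph{alone} by testing $\pa_t u_{n+1}=-\diver(q^2\na v)$, $v=\sum_i u_ip_i/q$, against $\Psi'(u_{n+1})$, and then to absorb the cross term $R=\int_\Omega\na q(u_{n+1})\cdot\na\big(\sum_i u_ip_i(u)\big)\,dx$ into the dissipation of \eqref{ei}. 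This fails in exactly the degenerate regime the theorem must cover (e.g.\ $q(s)=s$, which satisfies $q'\ge q_1>0$ but $q(0)=0$): $\na\big(\sum_i u_ip_i\big)$ is a bounded combination of the \emph{unweighted} gradients $\na u_i=2u_i^{1/2}\na u_i^{1/2}$, whereas \eqref{ei} only controls $q(u_{n+1})^2|\na u_i^{1/2}|^2$; after Cauchy--Schwarz you need $\int_\Omega q|\na u_i^{1/2}|^2\,dx$, which is \emph{not} dominated by $\int_\Omega q^2|\na u_i^{1/2}|^2\,dx$ where $q$ vanishes. Moreover your ``good'' term $-4\int_\Omega\big(\sum_i u_ip_i\big)|\na q(u_{n+1})^{1/2}|^2\,dx$ carries the factor $\sum_i u_ip_i\ge p_0(1-u_{n+1})$, which degenerates where $u_{n+1}\to1$, so it does not dominate $-cE_{n+1}$ through the convex Sobolev inequality either. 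The same objections apply to the claimed closed inequality for $E_{\mathrm B}$ in your last paragraph.

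The point you are missing is that no closed evolution inequality for $E_{n+1}$ is needed, and the paper never derives one. Since all three Bregman pieces are nonnegative, $\int_\Omega h_2^*(u_{n+1}(t)|u_{n+1}^\infty)\,dx\le\int_\Omega h^*(u(t)|u^\infty)\,dx$, so combining the \emph{time-integrated} entropy inequality \eqref{edi3} with your convex Sobolev bound immediately gives
\begin{equation*}
  \int_\Omega h_2^*(u_{n+1}(t)|u_{n+1}^\infty)\,dx
  +\frac{c_0q_1}{4c_S}\int_0^t\int_\Omega h_2^*(u_{n+1}(s)|u_{n+1}^\infty)\,dx\,ds
  \le\int_\Omega h^*(u^0|u^\infty)\,dx ,
\end{equation*}
to which Gr\"onwall is applied; this is also why the constant $C_1$ in \eqref{conv.unp1} involves the \emph{full} initial relative entropy $h^*(u^0|u^\infty)$ rather than $h_2^*(u^0_{n+1}|u_{n+1}^\infty)$ — the comparison $h_2^*\le h^*$ is only one-sided, and the price is paid in the prefactor, not in the rate. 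The analogous integrated inequality for $h_1^*$ (using $q^2\ge q_0^2$) yields \eqref{conv.ui}. Everything else in your write-up — the identity $\Psi''(u_{n+1})|\na u_{n+1}|^2=4|\na q(u_{n+1})^{1/2}|^2/q'(u_{n+1})$, the mean-value normalizations from mass conservation, and the Csisz\'ar--Kullback-type quadratic lower bounds — matches the paper.
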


The convexity of $\Omega$ and the concavity of $q/q'$ is needed to apply 
the convex Sobolev inequality (see Lemma \ref{lem.csi} below). For instance,
$q/q'$ is concave for $q(s)=s^\alpha$ with $\alpha>0$.
The condition on the strict positivity of $q$ contradicts the assumption $q(0)=0$
in Hypothesis \eqref{hp.q}.
However, Theorem \ref{thm.ex} is also valid for functions $q(0)>0$. In fact,
the existence analysis is much easier in this case
since the problem becomes nondegenerate.

The idea of the proof is to derive an inequality for the relative entropy 
\begin{equation}\label{rel.ent}
  \int_\Omega h^*(u|u^\infty)dx 
	= \int_\Omega\big(h(u)-h(u^\infty)-h'(u^\infty)\cdot(u-u^\infty)\big)dx.
\end{equation}
A computation, which is made rigorous in Section \ref{sec.conv}, shows that
$$
  \frac{d}{dt}\int_\Omega\int_{u^0_{n+1}}^{u_{n+1}(t)}\log q(s)dsdx
	+ c\int_\Omega|\na q(u_{n+1})^{1/2}|^2 dx \le 0
$$
for some $c>0$.
The entropy dissipation can be bounded from below (up to a factor) by
the relative entropy by means of 
the convex Sobolev inequality \cite{AMTU01}. Together with the
Gronwall lemma and the convexity of the relative entropy, this
yields exponential convergence of $u_{n+1}(t)$ to $u_{n+1}^\infty$ in the $L^2$ 
norm. In a similar way, we obtain the entropy inequality
$$
  \frac{d}{dt}\int_\Omega\sum_{i=1}^n u_i(t)\log\frac{u_i(t)}{u_i^\infty}dx
	+ c\int_\Omega q(u_{n+1})^2|\na u_i^{1/2}|^2 dx \le 0.
$$
Here, the degeneracy of $q$ at $u_{n+1}=0$ prevents the application of the
logarithmic Sobolev inequality. For this reason, we assume that $q$ is strictly
positive. Then, by Gronwall's lemma again, we deduce the exponential convergence
of $u_i(t)$ to $u_i^\infty$ in the $L^2$ norm.

Our last theorem is a uniqueness result in the special case $p_i\equiv 1$.
This includes the ion-transport model \cite{BSW12}.

\begin{theorem}[Uniqueness of solutions]\label{thm.unique}
Let the assumptions of Theorem \ref{thm.ex} hold and let $p_i\equiv 1$ for 
$i=1,\ldots,n$.
Then there exists a unique weak solution to \eqref{eq}-\eqref{bic} satisfying 
\eqref{reg1}-\eqref{reg2}.
\end{theorem}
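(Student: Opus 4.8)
The plan is to compare two weak solutions $u,\bar u$ sharing the initial datum $u^0$ and to argue in two stages: an $H^{-1}$ argument that pins down the void fraction $u_{n+1}$, followed by an entropy--dissipation argument via the $E$-monotonicity technique of Gajewski for the remaining components.

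\emph{Stage 1: the variable $u_{n+1}$.} With $p_i\equiv 1$ and $q_i=q$, the flux of the $i$th equation equals $q(u_{n+1})\na u_i-u_iq'(u_{n+1})\na u_{n+1}$. Taking all test functions in \eqref{weak} equal and summing over $i$, the identity $\sum_i u_i=1-u_{n+1}$ makes the cross terms telescope and shows that $u_{n+1}$ is a weak solution of the scalar porous-medium-type equation $\pa_t u_{n+1}=\Delta\Theta(u_{n+1})$ with no-flux boundary condition, where $\Theta'(s)=q(s)+(1-s)q'(s)$. By \eqref{hp.q}, $\Theta'>0$ on $(0,1)$, so $\Theta$ is nondecreasing (it may degenerate only at $s=0$). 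The bounds \eqref{reg1}--\eqref{reg2} give $\na\Theta(u_{n+1})\in L^2(\Omega\times(0,T))$ (using $q'\ge\gamma q$ to absorb the weight $q^{1/2}$ in \eqref{reg1}) and $\pa_t u_{n+1}\in L^2(0,T;H^1(\Omega)')$, while no-flux conditions conserve $\int_\Omega u_{n+1}\,dx$, so $u_{n+1}-\bar u_{n+1}$ has zero mean. Testing the difference equation with the inverse Neumann Laplacian $(-\Delta)^{-1}(u_{n+1}-\bar u_{n+1})$ and integrating by parts gives
\[
  \frac{d}{dt}\,\tfrac12\|u_{n+1}-\bar u_{n+1}\|_{H^1(\Omega)'}^2
  = -\int_\Omega\big(\Theta(u_{n+1})-\Theta(\bar u_{n+1})\big)(u_{n+1}-\bar u_{n+1})\,dx\le 0
\]
by monotonicity of $\Theta$, hence $u_{n+1}=\bar u_{n+1}=:\rho$ a.e.

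\emph{Stage 2: the components $u_1,\dots,u_n$.} With $\rho$ now fixed, the $i$th equation reads $\pa_t u_i=\diver\big(q(\rho)\na u_i-u_iq'(\rho)\na\rho\big)$, which is linear in $u_i$ but carries the degenerate coefficient $q(\rho)$ and a drift, so a direct $H^{-1}$ estimate does not close. Instead I would use Gajewski's symmetrized relative entropy
\[
  E(t)=\sum_{i=1}^n\int_\Omega\Big(u_i\log u_i+\bar u_i\log\bar u_i-2M_i\log M_i\Big)\,dx,
  \qquad M_i:=\tfrac12(u_i+\bar u_i),
\]
which is nonnegative by convexity of $s\mapsto s\log s$, vanishes iff $u=\bar u$, and satisfies $E(0)=0$. (Only this part of $h$ matters: the $\int_a^{\rho}\log q$ term cancels in the combination $f(u)+f(\bar u)-2f(M)$ since $\rho$ is common, and $\chi$ is constant when $p_i\equiv 1$.) Differentiating $E$ and inserting the weak formulations, one expands $\na\log(u_i/M_i)=\na u_i/u_i-\na M_i/M_i$ and uses $\na u_i+\na\bar u_i=2\na M_i$; the drift terms proportional to $q'(\rho)\na\rho$ then cancel \emph{identically}, leaving
\[
  \frac{dE}{dt}=-\sum_{i=1}^n\int_\Omega q(\rho)\left(\frac{|\na u_i|^2}{u_i}+\frac{|\na\bar u_i|^2}{\bar u_i}-\frac{2|\na M_i|^2}{M_i}\right)dx.
\]
The bracket is pointwise $\ge 0$ by the Cauchy--Schwarz inequality $|a|^2/x+|b|^2/y\ge|a+b|^2/(x+y)$, i.e.\ by the subadditivity $I(u_i)+I(\bar u_i)\ge 2I(M_i)$ of the Fisher information; since $q(\rho)\ge 0$, we get $E(t)\le E(0)=0$, hence $u_i\equiv\bar u_i$ for all $i$, and together with Stage 1 the solution is unique.

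\emph{Main obstacle.} The delicate point is to justify the differentiation of $E$: the natural test function $\log(u_i/M_i)$ need not lie in $L^2(0,T;H^1(\Omega))$, since $\log u_i\to-\infty$ on $\{u_i=0\}$ and \eqref{reg1} only provides the \emph{weighted} bound $q(\rho)^{1/2}\na u_i^{1/2}\in L^2$ with the possibly vanishing weight $q(\rho)$. I would regularize by replacing $u_i,\bar u_i,M_i$ with $u_i+\delta,\bar u_i+\delta,M_i+\delta$ inside the logarithms: this makes the test functions admissible, preserves the exact drift cancellation and the Fisher-information inequality up to error terms controlled by the weighted estimates and the entropy inequality \eqref{ei}, and one then sends $\delta\to0$ using Fatou's and the dominated convergence theorems. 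One must also track the degeneracy of $q$ throughout (it is the reason the principal term carries the weight $q(\rho)$ yet stays sign-definite), and in Stage 1 the possible degeneracy of $\Theta$ at $0$; both are harmless for the monotonicity arguments. When $\int_0^b|\log q(s)|\,ds=+\infty$ for $0<b<1$, part (iii) of Theorem~\ref{thm.ex} gives $\rho>0$ a.e., which legitimizes $\log q(\rho)$ and simplifies the limit.
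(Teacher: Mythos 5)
Your proposal is correct and follows essentially the same route as the paper: the $H^{-1}$ (inverse Neumann Laplacian) argument for $u_{n+1}$ with $Q'(s)=q(s)+(1-s)q'(s)$, then Gajewski's symmetrized entropy $d(u,v)$ together with the subadditivity of the Fisher information for the components $u_1,\dots,u_n$, including the $\eps$-shift of the logarithm (the paper uses $\xi_\eps(s)=(s+\eps)(\log(s+\eps)-1)+1$) to make the test functions admissible and the dominated-convergence passage $\eps\to0$ in the residual drift terms. The only cosmetic difference is that the paper extracts $u_i=v_i$ from $d_\eps\to0$ via the quantitative Taylor bound $\xi_\eps(u_i)+\xi_\eps(v_i)-2\xi_\eps((u_i+v_i)/2)\ge\frac18(u_i-v_i)^2$, which is the precise form of your final limiting step.
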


The idea of the proof is
to combine the $H^{-1}$ method and the $E$-monotonicity technique of
Gajewski \cite{Gaj94}. In fact, we exploit the special structure of 
\eqref{eq} and \eqref{def.A} in the case $p_i\equiv 1$:
$$
  \pa_t u_i = \diver\big(q(u_{n+1})\na u_i - u_i\na q(u_{n+1})\big), \quad
	i=1,\ldots,n.
$$
Summing all these equations, we end up with a simple equation for $u_{n+1}$:
$$
  \pa_t u_{n+1} = \Delta Q(u_{n+1}), \quad Q'(s) = q(s) + (1-s)q'(s).
$$
The uniqueness for $u_{n+1}$ is shown by the usual $H^{-1}$ method.
The uniqueness for the remaining components $u_i$ is more difficult since
we cannot easily treat the drift term. This is in contrast to the drift-diffusion
equations for semiconductors, 
where a monotonicity property of the drift term can be exploited.
Here, we employ the $E$-monotonicity method \cite{Gaj94}. This method is based
on the convexity of the logarithmic entropy. More precisely, define the
distance
\begin{align*}
  d(u,v) &= \sum_{i=1}^n\int_\Omega\left(\xi(u_i)+\xi(v_i)-2\xi\left(\frac{u_i+v_i}{2}
	\right)\right)dx, \\
	\xi(s) &= s(\log s-1)+1, \quad s\ge 0.
\end{align*}
A formal computation, which is made rigorous in Section \ref{sec.unique}, 
using the subadditivity of the Fisher information (see Lemma \ref{lem.fisher}),
shows that
$$
  \frac{d}{dt}d(u,v) \le 0, \quad t>0,
$$
and consequently, $d(u(t),v(t))\le d(u(0),v(0))=0$ for $t>0$.
Since $\xi$ is convex, we infer that $d(u(t),v(t))\ge 0$, which finally yields
$u_i=v_i$ for $i=1,\ldots,n$. 


\section{Auxiliary results}\label{sec.aux}

\subsection{Invertibility of the entropy transformation}\label{sec.inv}

We show that the transformation of variables $w=h'(u)$ can be inverted.
Recall that the set $\D$ is defined in \eqref{D}.

\begin{lemma} \label{lem.h}
Let assumptions \eqref{hp.q}-\eqref{hp.p} hold. Then the function $h:\D\to\R$,
defined in \eqref{h}, is strictly convex, nonnegative, belongs to $C^2(\D)$,
and its gradient $h':\D\to\R^n$ is invertible.
Moreover, the inverse of the Hessian $h'':\D\to\R^n$ is uniformly bounded.
\end{lemma}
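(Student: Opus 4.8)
The plan is to verify each assertion of the lemma in turn, starting from the explicit form \eqref{h} of $h$.

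\emph{Regularity and nonnegativity.} Since $q\in C^3([0,1])$ with $q>0$ on $(0,1)$, the map $s\mapsto\log q(s)$ is $C^3$ on $(0,1)$, so $u_{n+1}\mapsto\int_a^{u_{n+1}}\log q(s)\,ds$ is $C^3$ on $(0,1)$; composed with the affine map $u\mapsto u_{n+1}=1-\sum_j u_j$ it is $C^3$ on $\D$. Likewise $\chi\in C^3(\overline{\D})$ by \eqref{hp.p}, and $\sum_i(u_i\log u_i-u_i+1)$ is $C^\infty$ on $\D$ since each $u_i>0$. Hence $h\in C^3(\D)\subset C^2(\D)$. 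Nonnegativity is read off termwise: $s\log s-s+1\ge 0$ for $s\ge 0$; the integral term is $\ge 0$ by the choice \eqref{def.a} of $a$ (this is exactly the defining property of $a$); and $\chi\ge 0$ on $\overline{\D}$ by \eqref{hp.p}. So $h\ge 0$ on $\D$.

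\emph{Strict convexity.} Compute the Hessian. Writing $e=(1,\dots,1)^\top\in\R^n$, we have $\partial u_{n+1}/\partial u_j=-1$, so the integral term contributes $\frac{\partial^2}{\partial u_i\partial u_j}\int_a^{u_{n+1}}\log q = (\log q)'(u_{n+1})\,e_ie_j = \dfrac{q'(u_{n+1})}{q(u_{n+1})}\,e e^\top$ in matrix form; the first sum contributes $\operatorname{diag}(1/u_i)$; and $\chi$ contributes $\chi''(u)$. Therefore
\begin{equation}\label{hess.h}
  h''(u) = \operatorname{diag}\!\left(\tfrac{1}{u_i}\right) + \tfrac{q'(u_{n+1})}{q(u_{n+1})}\,ee^\top + \chi''(u).
\end{equation}
On $\D$ all three summands are symmetric positive semi-definite: $\operatorname{diag}(1/u_i)$ is positive definite since $0<u_i<1$; $ee^\top$ is rank-one positive semi-definite and its coefficient $q'/q$ is $\ge\gamma>0$ on $(0,1)$ by \eqref{hp.q}; and $\chi''$ is positive semi-definite by the convexity assumption in \eqref{hp.p}. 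Since the first term alone is positive definite, $h''(u)$ is positive definite on $\D$, so $h$ is strictly convex there.

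\emph{Invertibility of $h'$ and bound on $(h'')^{-1}$.} Strict convexity gives that $h'$ is injective on the convex set $\D$. To see it is onto $\R^n$ (which is what "invertible" should mean here, since the entropy variables range over all of $\R^n$), I would argue via the Legendre transform: $h$ is a $C^2$ strictly convex function on the open convex set $\D$, and one checks that $h'(u)\to\infty$ as $u$ approaches $\partial\D$ in every direction — indeed as $u_i\to 0$ the term $\log u_i\to-\infty$ forces the corresponding direction of $h'$ to $-\infty$, and as $u_{n+1}\to 0$ the term $\log q(u_{n+1})\to-\infty$ (using $q(0)=0$, $q$ nondecreasing) combined with $\partial u_{n+1}/\partial u_j=-1$ drives $h'$ to $+\infty$; the $\chi$-contribution is bounded on $\overline{\D}$ and does not interfere. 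Hence $h$ is essentially smooth in the sense of Rockafellar, so $h':\D\to\R^n$ is a bijection. Finally, for the uniform bound on $(h'')^{-1}$: from \eqref{hess.h}, dropping the positive semi-definite terms $\frac{q'}{q}ee^\top$ and $\chi''$ gives $h''(u)\succeq\operatorname{diag}(1/u_i)\succeq I$ (since $u_i<1$), whence $(h''(u))^{-1}\preceq I$, i.e. $\|(h''(u))^{-1}\|\le 1$ uniformly on $\D$.

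\emph{Main obstacle.} The routine parts are regularity, nonnegativity, and the Hessian computation. The one point needing care is the surjectivity of $h'$ onto $\R^n$: one must check the blow-up of $h'$ at \emph{every} boundary portion of $\D$ — the faces $\{u_i=0\}$ and the face $\{u_{n+1}=0\}$ — and confirm that the finite contribution of $\chi$ and of $\chi''$ cannot cancel these divergences. This is where the structural hypotheses \eqref{hp.q}–\eqref{hp.p} (in particular $q(0)=0$ with $q'/q\ge\gamma$, and $\chi\in C^3(\overline{\D})$) are genuinely used.
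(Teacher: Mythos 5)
Your proof reaches the same conclusions but takes a genuinely different route for the one nontrivial claim, the invertibility of $h'$. The paper factorizes $h'=f\circ g$ with $g_i(u)=\log u_i-\log q(u_{n+1})$, quotes the invertibility of $g$ from \cite{Jue14}, and then applies Hadamard's global inverse function theorem to $f(y)=y+\chi'(g^{-1}(y))$, using the boundedness of $\chi'$ on $\overline\D$ to get $|f(y)|\to\infty$ as $|y|\to\infty$. You instead invoke convex duality: since $\D$ is bounded and $h\ge 0$, the conjugate $h^*$ is finite on all of $\R^n$, so once $h$ is shown to be essentially smooth (gradient blowing up at $\pa\D$) and strictly convex, $h'$ is a bijection of $\D$ onto $\R^n$. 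This is legitimate and arguably more self-contained: it avoids both the external reference and Hadamard's theorem, at the price of having to control $h'$ on the whole boundary of $\D$. Your Hessian computation, the lower bound $v^\top h''(u)v\ge|v|^2$, and the resulting uniform bound on $h''(u)^{-1}$ coincide with the paper's \eqref{D2h} and the argument that follows it; the regularity and nonnegativity checks are routine and correct.

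One step of your blow-up argument is incomplete as written. You verify the divergence of $h'$ separately on the faces $\{u_i=0\}$ and $\{u_{n+1}=0\}$, but these faces meet, and along a sequence with $u_i\to0$ \emph{and} $u_{n+1}\to 0$ the component $\pa h/\pa u_i=\log u_i-\log q(u_{n+1})+O(1)$ is a difference of two terms both tending to $-\infty$ and may remain bounded (for instance if $u_i\sim q(u_{n+1})$). The danger is not, as your closing paragraph suggests, a cancellation coming from $\chi$ (which is bounded on $\overline\D$), but a cancellation between the two logarithms at such corner points. The fix is short: if $u_{n+1}\to0$ then $\sum_{j}u_j\to1$, so some index $j$ has $u_j$ bounded away from zero along a subsequence, and for that $j$ one gets $\pa h/\pa u_j=\log u_j-\log q(u_{n+1})+O(1)\to+\infty$; hence $|h'(u)|\to\infty$ at every boundary point after all, and your duality argument goes through.
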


\begin{proof}
We first show that $h':\D\to\R^n$ is invertible. For this, we observe that
$$
  \frac{\pa h}{\pa u_i} = \log u_i - \log q\Big(1-\sum_{j=1}^n u_j\Big)
	+ \frac{\pa\chi}{\pa u_i}, \quad i=1,\ldots,n.
$$
The Jacobian of the function $g=(g_1,\ldots,g_n):\D\to\R^n$, defined by 
$g_i(u)=\log u_i - \log q(1-\sum_{j=1}^n u_j)$, is positive definite since
$$
  \frac{\pa g_i}{\pa u_j} = \frac{\delta_{ij}}{u_i} + \frac{q'(u_{n+1})}{q(u_{n+1})}.
$$
It is shown in Step 1 of the proof of Theorem 6 in \cite{Jue14} that
$g:\D\to\R^n$ is invertible. Thus, we can define the function $f=h'\circ g^{-1}:
\R^n\to\R^n$. Since $h''(u)$ and $g'(u)$ are nonsingular matrices for $u\in\D$,
the Jacobian of $f$,
$$
  f'(y) = h''(g^{-1}(y))(g')^{-1}(g^{-1}(y)),
$$
is nonsingular for $y\in\R^n$. Moreover, by the definitions of $f$ and $g$, we have
\begin{equation}\label{Dh.inv}
  f(y) = y + \chi'(g^{-1}(y)), \quad y\in\R^n.
\end{equation}
Hypothesis \eqref{hp.p} states that $\chi'\in C^0(\overline\D)\subset L^\infty(\D)$,
thus \eqref{Dh.inv} implies that $|f(y)|\to\infty$ as $|y|\to\infty$. 
This property as well as the invertibility of the matrix $f'(u)$ allow us to apply 
Hadamard's global inverse theorem, showing that $f:\R^n\to\R^n$ is invertible.
Consequently, also $h'=f\circ g:\D\to\R^n$ is invertible.

It remains to prove that the inverse of the Hessian of $h$ is bounded.
Since $q'/q\ge 0$, $0<u_i<1$, and $\chi$ is convex in $\D$, the expression 
\begin{equation}\label{D2h}
  \frac{\pa^2 h}{\pa u_i\pa u_j} 
	= \frac{\delta_{ij}}{u_i} + \frac{q'(u_{n+1})}{q(u_{n+1})}
	+ \frac{\pa^2\chi}{\pa u_i\pa u_j}, \quad u\in\D,
\end{equation}
shows that
$v^\top h''(u)v\ge |v|^2$ for all $u\in\D$, $v\in\R^n$. We infer that all points
in the spectrum of $h''$ are strictly positive in $\D$. In particular, $h$ is
strictly convex. As $h''$ is symmetric, we conclude that the inverse of
$h''$ is bounded in $\D$.
\end{proof}


\subsection{Positive definiteness of $HA$}\label{sec.pd}

We show that the product $HA$ of the Hessian $H:=h''(u)$ and the diffusion matrix
$A=A(u)$ is positive definite. This result is needed to deduce gradient estimates
for $u$; see \eqref{dHdt}.

\begin{lemma}\label{lem.HA}
Let assumptions \eqref{hp.q}-\eqref{hp.p} hold. Then the matrix $HA$ is
symmetric and positive definite. More precisely, for all $u\in\D$ and $v\in\R^n$,
we have
\begin{equation}\label{vHAv}
  v^\top(HA)v \ge p_0q(u_{n+1})\sum_{i=1}^n\frac{v_i^2}{u_i}
	+ p_0\delta\frac{q'(u_{u_n+1})^2}{q(u_{n+1})}\left(\sum_{i=1}^n v_i\right)^2,
\end{equation}
where  
\begin{equation}\label{delta}
  p_0=\min_{1\le i\le n}\inf_{u\in\D}p_i(u)>0, \quad
  \delta = \min\left\{\frac12, \frac{2q(1/2)}{\sup_{1/2<s<1}q'(s)}\right\} > 0.
\end{equation}
\end{lemma}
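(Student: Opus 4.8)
The plan is to first exploit \eqref{hp.p} to put $A$ into a transparent form. Since $p_i=\exp(\pa\chi/\pa u_i)$ we have $\pa p_i/\pa u_j=p_i\,\pa^2\chi/(\pa u_i\pa u_j)$, and substituting this into \eqref{def.A} gives
$$A_{ij}(u)=p_i(u)q(u_{n+1})\Big(\delta_{ij}+u_i\frac{q'(u_{n+1})}{q(u_{n+1})}+u_i\frac{\pa^2\chi}{\pa u_i\pa u_j}(u)\Big)=q(u_{n+1})\,u_ip_i(u)\,H_{ij},$$
where $H=h''(u)$ has the entries \eqref{D2h}; that is, $A=q(u_{n+1})\,\mathrm{diag}(u_ip_i(u))\,H$. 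Consequently $HA=q(u_{n+1})\,H\,\mathrm{diag}(u_ip_i(u))\,H$ is a product of a diagonal matrix sandwiched between two copies of the symmetric matrix $H$, hence symmetric, and for $v\in\R^n$,
$$v^\top(HA)v=q(u_{n+1})\sum_{i=1}^n u_ip_i(u)\,\big((Hv)_i\big)^2\ \ge\ p_0\,q(u_{n+1})\sum_{i=1}^n u_i\,\big((Hv)_i\big)^2,$$
using $p_i(u)\ge p_0>0$ from \eqref{delta}. Thus everything reduces to a lower bound for $\sum_i u_i((Hv)_i)^2$.

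The second step is a direct expansion. By \eqref{D2h}, $(Hv)_i=v_i/u_i+r\sigma+(\chi''(u)v)_i$ with $r:=q'(u_{n+1})/q(u_{n+1})\ge0$ and $\sigma:=\sum_{k=1}^n v_k$; writing $U:=\mathrm{diag}(u_i)$ and using $\sum_i v_i(\chi''v)_i=v^\top\chi''v$ one obtains
$$\sum_{i=1}^n u_i\big((Hv)_i\big)^2=\sum_{i=1}^n\frac{v_i^2}{u_i}+2\big(r\sigma^2+v^\top\chi''v\big)+\big\|U^{1/2}(r\sigma\mathbf{1}+\chi''v)\big\|^2 .$$
Since $\chi$ is convex, $\chi''(u)$ is positive semidefinite, so $v^\top\chi''v\ge0$ and the last term is $\ge0$; this already delivers the first term $p_0q(u_{n+1})\sum_i v_i^2/u_i$ of \eqref{vHAv}, and also the strict positive definiteness of $HA$ on $\D$ since $p_0q(u_{n+1})>0$ there. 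It remains to extract the $\delta r^2\sigma^2$ part, i.e.\ to show
$$2r\sigma^2+2v^\top\chi''v+\big\|U^{1/2}(r\sigma\mathbf{1}+\chi''v)\big\|^2\ \ge\ \delta\,r^2\sigma^2 ,$$
since $q(u_{n+1})r^2=q'(u_{n+1})^2/q(u_{n+1})$; the useful elementary identity here is $\sum_{i=1}^n u_i=1-u_{n+1}$.

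The main obstacle is precisely the degeneracy $q(u_{n+1})\to0$, which makes $r$ large, and I would meet it by splitting on $u_{n+1}$. If $u_{n+1}\ge\tfrac12$ then monotonicity of $q$ gives $q(u_{n+1})\ge q(\tfrac12)$ and $q'(u_{n+1})\le\sup_{1/2<s<1}q'(s)$, so $r\le\sup q'/q(\tfrac12)=2/\delta'\le2/\delta$ with $\delta':=2q(\tfrac12)/\sup_{1/2<s<1}q'$; discarding the two nonnegative terms then leaves $2r\sigma^2\ge\delta r^2\sigma^2$ because $\delta r\le2$. If $u_{n+1}<\tfrac12$, then $\sum_{i=1}^n u_i=1-u_{n+1}>\tfrac12$ and $\delta\le\tfrac12$; the heart of the matter is that the semidefiniteness $\chi''\succeq0$ prevents $\chi''v$ from cancelling $r\sigma\mathbf{1}$ when $\sigma\neq0$ — indeed $v^\top\chi''v\ge0$ rules out $\chi''v=-r\sigma\mathbf{1}$ unless $\sigma=0$ — so a Cauchy–Schwarz estimate combining $2v^\top\chi''v$ with $\|U^{1/2}(r\sigma\mathbf{1}+\chi''v)\|^2$, together with the leftover $2r\sigma^2\ge0$, is used to show the left-hand side still dominates $r^2\sigma^2(1-u_{n+1})\ge\tfrac12 r^2\sigma^2\ge\delta r^2\sigma^2$. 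This quantitative bookkeeping for the $\chi''$-cross-terms in the degenerate regime, and the verification that it is consistent with the stated value of $\delta$, is the only delicate point; collecting the two cases yields \eqref{vHAv} and, in particular, the positive definiteness of $HA$.
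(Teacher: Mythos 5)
Your opening identity is correct and genuinely elegant: since $\pa p_i/\pa u_j=p_i\,\pa^2\chi/\pa u_i\pa u_j$, one indeed has $A=q(u_{n+1})\,\mathrm{diag}(u_ip_i(u))\,h''(u)$, so $HA=q\,H\,\mathrm{diag}(u_ip_i)\,H$ is symmetric and positive semi-definite with essentially no computation, and $v^\top(HA)v=q\sum_iu_ip_i((Hv)_i)^2$; your case $u_{n+1}\ge\frac12$ is also fine. The gap is the reduction $p_i\ge p_0$ followed by the claim that
$$
\Sigma:=\sum_{i=1}^nu_i\big((Hv)_i\big)^2\ \ge\ \sum_{i=1}^n\frac{v_i^2}{u_i}+\delta r^2\sigma^2,
\qquad r=\frac{q'(u_{n+1})}{q(u_{n+1})},\ \ \sigma=\sum_{i=1}^nv_i,
$$
in the regime $u_{n+1}<\frac12$. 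This inequality is false, and no Cauchy--Schwarz bookkeeping can rescue it: the qualitative fact that $\chi''v=-r\sigma\mathbf{1}$ forces $\sigma=0$ does not exclude near-cancellation. Concretely, set $S=\sum_iv_i^2/u_i$, fix a small $\mu>0$, let $z=-r\sigma\mathbf{1}+c\,(v_i/u_i)_{i}$ with $c=(1+\mu)r\sigma^2/S$, and take $\chi(u)=\frac12u^\top Cu$ with $C=zz^\top/(v^\top z)$, which is an admissible convex $\chi$ since $v^\top z=\mu r\sigma^2>0$. Then $\chi''v=z$, hence $(Hv)_i=(1+c)v_i/u_i$ and
$$
\Sigma=(1+c)^2S=S+2(1+\mu)r\sigma^2+\frac{(1+\mu)^2r^2\sigma^4}{S},
$$
which is strictly smaller than $S+\delta r^2\sigma^2$ as soon as $2(1+\mu)/r+(1+\mu)^2\sigma^2/S<\delta$. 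With $q(s)=s$ (so $\delta=\tfrac12$), $n=2$, $u=(\tfrac{9}{20},\tfrac{9}{20})$ (so $r=10$) and $v=(1,-\tfrac{9}{10})$, the left side is about $0.2$, so your claimed bound fails by more than a factor of $2$.

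The failure is structural rather than a matter of sharper estimates: once every $p_i$ is replaced by the single constant $p_0$, the cross term $2r\sigma\,u^\top\chi''v$ hidden in $\|U^{1/2}(r\sigma\mathbf{1}+\chi''v)\|^2$ can absorb essentially all of $r^2\sigma^2(1-u_{n+1})$, leaving only terms of order $r\sigma^2$, which are negligible against $\delta r^2\sigma^2$ as $q(u_{n+1})\to0$. The paper's proof avoids exactly this trap: it splits $p_i=\lambda+\widehat p_i$ with $0<\lambda<p_0$ and writes $HA/q=M+\lambda N$, extracting the coercive term $\delta\varphi^2(\sum_iv_i)^2$ only from $N$, i.e.\ from the \emph{constant} part $\lambda$ of $p_i$, for which $\chi''$ contributes nothing and no cancellation can occur; the entire $\widehat p$-dependent (hence $\chi''$-dependent) matrix $M$ is then shown to be merely positive semi-definite by a completion of squares. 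If you wish to keep your factorization, you must perform the analogous split inside $\mathrm{diag}(u_ip_i)$ rather than bounding $p_i$ from below at the outset.
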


\begin{proof}
First, we verify the symmetry of $HA$. Using \eqref{D2h} and the definition of
$A$, we find that
\begin{align*}
  (HA)_{ij} &= \sum_{k=1}^{n}\left(\frac{\delta_{ik}}{u_{i}} 
	+ \frac{\pa^{2}\chi}{\pa u_{i}\pa u_{k}} + \frac{q'}{q} \right)
	\left(\delta_{kj}p_{k}q + u_{k}p_{k}q' + u_{k}q\frac{\pa p_{k}}{\pa u_{j}}\right) \\
  &= \delta_{ij}\frac{p_{i}q}{u_{i}} + p_{i}q' + \frac{\pa p_{i}}{\pa u_{j}}q 
  + \frac{\pa^{2}\chi}{\pa u_{i}\pa u_{j}}p_{j}q 
	+ \sum_{k=1}^{n}\frac{\pa^{2}\chi}{\pa u_{i}\pa u_{k}}u_{k}p_{k}q' \\
  &\phantom{xx}{} + \sum_{k=1}^{n}\frac{\pa^{2}\chi}{\pa u_{i}\pa u_{k}}
	\frac{\pa p_{k}}{\pa u_{j}}u_{k}q
  + p_{j} q' + \frac{(q')^{2}}{q}\sum_{k=1}^n p_{k}u_{k} 
	+ q'\sum_{k}u_{k}\frac{\pa p_{k}}{\pa u_{j}}.
\end{align*}
Dividing this equation by $q$, defining $\varphi=q'/q$, and 
taking into account that, by assumption \eqref{hp.p},
$$
  \frac{\pa^{2}\chi}{\pa u_{i}\pa u_{j}} 
	= \frac{1}{p_{j}}\frac{\pa p_{j}}{\pa u_{i}} 
	= \frac{1}{p_{i}}\frac{\pa p_{i}}{\pa u_{j}} \quad\mbox{for }i,j=1,\ldots,n,
$$
we infer that
\begin{align}
  \frac{1}{q}(HA)_{ij} 
	&= \delta_{ij}\frac{p_{i}}{u_{i}} + p_{i}\varphi + \frac{\pa p_{i}}{\pa u_{j}}
  + \frac{\pa p_{j}}{\pa u_{i}} + \sum_{k=1}^{n}\frac{\pa p_{k}}{\pa u_{i}}u_{k}
	\varphi \nonumber \\
  &\phantom{xx}{} + \sum_{k=1}^{n}\frac{\pa p_{k}}{\pa u_{i}}
	\frac{\pa p_{k}}{\pa u_{j}} \frac{u_{k}}{p_{k}}
  + p_{j} \varphi + \varphi^{2}\sum_{k=1}^n p_{k}u_{k} 
	+ \varphi\sum_{k=1}^n u_{k}\frac{\pa p_{k}}{\pa u_{j}} \nonumber \\
  &= \delta_{ij}\frac{p_{i}}{u_{i}} + \frac{\pa p_{i}}{\pa u_{j}} 
	+ \frac{\pa p_{j}}{\pa u_{i}} 
	+ \sum_{k=1}^{n}\frac{u_{k}}{p_{k}}\frac{\pa p_{k}}{\pa u_{i}}
	\frac{\pa p_{k}}{\pa u_{j}} \nonumber \\
  &\phantom{xx}{} +\varphi\left( p_{i}+p_{j} + \sum_{k=1}^n u_{k}
	\left(\frac{\pa p_{k}}{\pa u_{i}} + \frac{\pa p_{k}}{\pa u_{j}}\right)\right)
  + \varphi^{2}\sum_{k=1}^n p_{k}u_{k}, \label{comp}
\end{align}
which proves the symmetry of $HA$.

Next, we show the lower bound \eqref{vHAv}. Since $p_i$ is strictly positive
in $\D$, $p_i(u)=\lambda + \widehat p_i(u)$ for any
$\lambda\in(0,p_0)$, where $p_0>0$ is defined in \eqref{delta}, and
$\widehat p_i(u)$ is still strictly positive in $\D$. Then we can write
\eqref{comp} as $HA/q = M+\lambda N$ for two matrices $M=(M_{ij})$ and
$N=(N_{ij})$, defined by
\begin{align*}
  M_{ij} &= \delta_{ij}\frac{\widehat p_{i}}{u_{i}} 
	+ \frac{\pa \widehat p_{i}}{\pa u_{j}} + \frac{\pa \widehat p_{j}}{\pa u_{i}} 
  + \sum_{k=1}^{n}\frac{u_{k}}{\widehat p_{k}+\lambda}
	\frac{\pa \widehat p_{k}}{\pa u_{i}}\frac{\pa \widehat p_{k}}{\pa u_{j}} \\
  &\phantom{xx}{} +\varphi\left(\widehat p_{i} + \widehat p_{j} 
	+ \sum_{k=1}^n u_{k}\left(\frac{\pa \widehat p_{k}}{\pa u_{i}} 
	+ \frac{\pa \widehat p_{k}}{\pa u_{j}}\right)\right)
  + \varphi^{2}\sum_{k=1}^n \widehat p_{k}u_{k}, \\
  N_{ij} &= \frac{\delta_{ij}}{u_{i}} + 2\varphi + \varphi^{2}\sum_{k=1}^n u_k
	= \frac{\delta_{ij}}{u_{i}} + 2\varphi + \varphi^{2}(1-u_{n+1}).
\end{align*}
Let $v\in\R^n$. Then $v^\top (HA/q)v = v^\top Mv + v^\top Nv$.
We consider $v^\top Nv$ first:
\begin{equation}\label{vNv}
  v^\top Nv = \sum_{i=1}^{n} \frac{v_{i}^{2}}{u_{i}} + \varphi(2 
	+ \varphi(1-u_{n+1}))\left(\sum_{i=1}^n v_{i}\right)^{2}.
\end{equation}
The inequalities
\begin{align*}
  & 2 q(s) + (1-s)q'(s) \geq (1-s)q'(s)\geq \frac{1}{2}q'(s) &\mbox{for } 
	0 \leq s \leq\frac{1}{2},\\
  & 2 q(s) + (1-s)q'(s) \geq 2 q(s) 
	\geq \frac{2 q(1/2)}{\sup_{1/2<\sigma<1}q'(\sigma)} q'(s)
  &\mbox{for } \frac{1}{2}\leq s\leq 1,
\end{align*}
imply that
$$
  2q(u_{n+1}) + (1-u_{n+1})q'(u_{n+1}) \ge \delta q'(u_{n+1}),
$$
where $\delta>0$ is defined in \eqref{delta}. Thus, \eqref{vNv} yields
$$
  v^\top Nv \ge \sum_{i=1}^n\frac{v_i^2}{u_i} + \delta\varphi^2\left(\sum_{i=1}^n
	v_i\right)^2.
$$

Finally, we show that $v^\top Mv\ge 0$, which, together with the above estimate
proves the lemma. Using the definition of $M$, we compute
\begin{align}\label{vMv}
  & v^{\top}M v = \sum_{i=1}^n \frac{\widehat p_{i}}{u_{i}}v_{i}^{2} 
	+ \sum_{k=1}^n \frac{u_{k}}{\widehat p_{k}}\left(\sum_{i=1}^n v_{i}
	\frac{\pa \widehat p_{k}}{\pa u_{i}}\right)^{2}
  + 2\sum_{i,j=1}^n \frac{\pa \widehat p_{j}}{\pa u_{i}}v_{i}v_{j} \\
  &\quad {}+2\varphi\left(\sum_{j=1}^n v_{j}\right)\left(\sum_{i=1}^n 
	\widehat p_{i}v_{i} + \sum_{k=1}^n u_{k}\sum_{i=1}^n v_{i}
	\frac{\pa \widehat p_{k}}{\pa u_{i}}\right)
  + \varphi^{2}\left(\sum_{k=1}^n u_{k}\widehat p_{k}\right)
	\left(\sum_{j=1}^n v_{j}\right)^{2}. \nonumber
\end{align}
Let us consider the terms proportional to $\varphi$ and $\varphi^2$:
\begin{align*}
  & 2\varphi\left(\sum_{j=1}^n v_{j}\right)\left(\sum_{i=1}^n \widehat p_{i}v_{i} 
	+ \sum_{k=1}^n u_{k}\sum_{i=1}^n v_{i}\frac{\pa \widehat p_{k}}{\pa u_{i}}\right)
  + \varphi^{2}\left(\sum_{k=1}^n u_{k}\widehat p_{k}\right)
	\left(\sum_{j=1}^n v_{j}\right)^{2} \\
  &\quad = \left(\sum_{k=1}^n u_{k}\widehat p_{k}\right)
	\left[\varphi^{2}\left(\sum_{j=1}^n v_{j}\right)^{2} + 2\varphi\left(
	\sum_{j=1}^n v_{j}\right)\frac{\sum_{i=1}^n \widehat p_{i}v_{i} 
	+ \sum_{k=1}^n u_{k}\sum_{i=1}^n v_{i}
	(\pa \widehat p_{k}/\pa u_{i})}{\sum_{k=1}^n u_{k}\widehat p_{k}}\right] \\
  &\quad = \left(\sum_{k=1}^n u_{k}\widehat p_{k}\right)
	\left[\varphi\sum_{j=1}^n v_{j} + \frac{\sum_{i=1}^n \widehat p_{i}v_{i} 
	+ \sum_{k=1}^n u_{k}\sum_{i=1}^n v_{i}(\pa \widehat p_{k}/\pa u_{i})}{\sum_{k=1}^n 
	u_{k}\widehat p_{k}}\right]^{2} \\
  &\quad\phantom{xx}{} - \frac{\big(\sum_{i=1}^n \widehat p_{i}v_{i} 
	+ \sum_{k=1}^n u_{k}\sum_{i=1}^n v_{i}(\pa \widehat p_{k}/\pa u_{i})
	\big)^{2}}{\sum_{k=1}^n u_{k}\widehat p_{k}}.
\end{align*}
Inserting this expression into \eqref{vMv} yields
\begin{align*}
  v^\top Mv &\ge \sum_{i=1}^n \frac{\widehat p_{i}}{u_{i}}v_{i}^{2} 
	+ \sum_{k=1}^n\frac{u_{k}}{\widehat p_{k}}\left(\sum_{i=1}^n v_{i}
	\frac{\pa \widehat p_{k}}{\pa u_{i}}\right)^{2}
  + 2\sum_{i,j=1}^n\frac{\pa \widehat p_{j}}{\pa u_{i}}v_{i}v_{j} \\
  &\phantom{xx}{} - \frac{\big(\sum_{i=1}^n \widehat p_{i}v_{i} 
	+ \sum_{k=1}^n u_{k}\sum_{i=1}^n v_{i}(\pa \widehat p_{k}/\pa u_{i})
	\big)^{2} }{\sum_{k=1}^nu_{k}\widehat p_{k}}.
\end{align*}
We claim that the right-hand side can be written as a square. To see this,
we introduce the vectors $y=(y_1,\ldots,y_n)$, $z=(z_1,\ldots,z_n)\in\R^n$ by
$$
  y_i = \sqrt{\frac{\widehat p_i}{u_i}}v_i + \sqrt{\frac{u_i}{\widehat p_i}}
	\sum_{k=1}^n v_k\frac{\pa \widehat p_i}{\pa u_k}, \quad
	z_i = \frac{\sqrt{u_i\widehat p_i}}{\sqrt{\sum_{k=1}^n u_k\widehat p_k}},
	\quad i=1,\ldots,n.
$$
The properties
\begin{align*}
  & |z|^{2}=1, \quad |y|^{2} = \sum_{i=1}^n \frac{\widehat p_{i}}{u_{i}}v_{i}^{2} 
	+ \sum_{k=1}^n \frac{u_{k}}{\widehat p_{k}}\left(\sum_{i=1}^n v_{i}
	\frac{\pa \widehat p_{k}}{\pa u_{i}}\right)^{2}
  + 2\sum_{i,j=1}^n \frac{\pa \widehat p_{j}}{\pa u_{i}}v_{i}v_{j}, \\
  & y\cdot z = \frac{\sum_{i=1}^n \widehat p_{i}v_{i} + \sum_{k=1}^n u_{k}
	\sum_{i=1}^n v_{i}(\pa \widehat p_{k}/\pa u_{i}) }{\sqrt{\sum_{k=1}^n 
	u_{k}\widehat p_{k}}}
\end{align*}
show that
$$
  v^\top Mv \ge |y|^2 - (y\cdot z)^2 = |y-(y\cdot z)z|^2 \ge 0.
$$
The lemma is proved.
\end{proof}


\subsection{Generalized Aubin lemmas}\label{sec.aubin}

We prove two generalized Aubin lemmas for functions which are piecewise constant
in time, extending results from \cite{CJL14,Jue14}.

\begin{lemma}[Generalized Aubin lemma I]\label{lem.aubin1}
Let $(\xi^{(\tau)})$, $(\eta^{(\tau)}_1),\ldots,(\eta^{(\tau)}_n)$
be sequences of functions which are piecewise constant in time with constant step 
size $\tau>0$ and which are bounded in $L^\infty(0,T;L^\infty(\Omega))$.
Furthermore, they satisfy the following properties:
\begin{itemize}
\item $\xi^{(\tau)}\to\xi$ strongly in $L^2(0,T;L^2(\Omega))$ as $\tau\to 0$.
\item $\eta^{(\tau)}_i\rightharpoonup\eta_i$ weakly* in 
$L^\infty(0,T;L^\infty(\Omega))$ as $\tau\to 0$ for $i=1,\ldots,n$.
\item There exists $C>0$ such that for all $\tau>0$ and $i=1,\ldots,n$,
\begin{equation}\label{aubin1}
  \|\xi^{(\tau)}\|_{L^{2}(0,T; H^{1}(\Omega))} 
	+  \|\xi^{(\tau)}\eta^{(\tau)}_i\|_{L^{2}(0,T; H^{1}(\Omega))} 
	+ \tau^{-1}\|\eta^{(\tau)}_i - \pi_{\tau}\eta^{(\tau)}_i
	\|_{L^2(\tau,T;H^1(\Omega)')} \le C,
\end{equation}
\end{itemize}
where $\pi_\tau\eta^{(\tau)}_i(\cdot,t)=\eta^{(\tau)}_i(\cdot,t-\tau)$ for
$\tau\le t\le T$ is a shift operator.
Let $D\subset\R^n$ be a compact domain such that $\eta^{(\tau)}(x,t)
=(\eta^{(\tau)}_1,\ldots,\eta^{(\tau)}_n)(x,t)\in D$ 
for a.e.\ $(x,t)\in\Omega\times(0,T)$. Then,
for all $f\in C^0(D;\R^n)$, up to a subsequence, as $\tau\to 0$,
$$
  \xi^{(\tau)}f(\eta^{(\tau)}) \to \xi^{(\tau)}f(\eta^{(\tau)})\quad
	\mbox{strongly in }L^2(0,T;L^2(\Omega)).
$$
\end{lemma}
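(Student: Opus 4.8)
The plan is to reduce the statement to the single case $f(\eta)=\eta_i$, $i=1,\dots,n$, and then obtain arbitrary continuous $f$ by an almost-everywhere argument. Assume we have shown that, along a subsequence, $\xi^{(\tau)}\eta^{(\tau)}_j\to\xi\eta_j$ strongly in $L^2(0,T;L^2(\Omega))$ for every $j$. Passing to a further subsequence along which also $\xi^{(\tau)}\to\xi$ and $\xi^{(\tau)}\eta^{(\tau)}_j\to\xi\eta_j$ pointwise a.e., we get $\eta^{(\tau)}_j=(\xi^{(\tau)}\eta^{(\tau)}_j)/\xi^{(\tau)}\to\eta_j$ a.e.\ on $\{\xi\neq 0\}$ for each $j$, hence $f(\eta^{(\tau)})\to f(\eta)$ a.e.\ there by continuity of $f$ on the compact set $D$, and therefore $\xi^{(\tau)}f(\eta^{(\tau)})\to\xi f(\eta)$ a.e.\ on $\{\xi\ne 0\}$; on $\{\xi= 0\}$ one has $|\xi^{(\tau)}f(\eta^{(\tau)})|\le\|f\|_{C^0(D)}\,|\xi^{(\tau)}|\to 0$ a.e. Since $\xi^{(\tau)}f(\eta^{(\tau)})$ is bounded uniformly in $L^\infty$ and $\Omega\times(0,T)$ has finite measure, dominated convergence turns this a.e.\ convergence into strong convergence in $L^2(0,T;L^2(\Omega))$ towards $\xi f(\eta)$, which is the assertion.

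For the key case, set $u^{(\tau)}:=\xi^{(\tau)}\eta^{(\tau)}_i$; by \eqref{aubin1} it is bounded in $L^2(0,T;H^1(\Omega))$, and, being a product of functions bounded in $L^\infty$, it is bounded in $L^\infty(0,T;L^\infty(\Omega))$. I would show that $(u^{(\tau)})$ is relatively compact in $L^2(0,T;L^2(\Omega))$ by means of the Aubin--Lions--Simon theorem \cite{Sim87}. The compact embedding $H^1(\Omega)\hookrightarrow\hookrightarrow L^2(\Omega)$ (Rellich) takes care of the spatial variable, so it is enough to verify that $(u^{(\tau)})$ is equicontinuous in time, uniformly in $\tau$, with values in the larger space $Y:=W^{1,p}(\Omega)'$, where $p$ is fixed with $p>d$ and $p\ge 2$ (so that $W^{1,p}(\Omega)\hookrightarrow L^\infty(\Omega)$ and $L^2(\Omega)\hookrightarrow Y$). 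The crucial observation is that for $\phi\in W^{1,p}(\Omega)$ the product $\xi^{(\tau)}(t)\phi$ lies in $H^1(\Omega)$ and satisfies
\[
  \|\xi^{(\tau)}(t)\phi\|_{H^1(\Omega)}\le C\bigl(1+\|\xi^{(\tau)}(t)\|_{H^1(\Omega)}\bigr)\|\phi\|_{W^{1,p}(\Omega)},
\]
so that, reading $\xi^{(\tau)}(t)\bigl[\eta^{(\tau)}_i(t)-\eta^{(\tau)}_i(t-h)\bigr]$ through the duality pairing of $H^1(\Omega)'$ with $H^1(\Omega)$, its $Y$-norm is at most $C\bigl(1+\|\xi^{(\tau)}(t)\|_{H^1(\Omega)}\bigr)\|\eta^{(\tau)}_i(t)-\eta^{(\tau)}_i(t-h)\|_{H^1(\Omega)'}$.

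Now I would split the time increment as $u^{(\tau)}(t)-u^{(\tau)}(t-h)=\xi^{(\tau)}(t)\bigl[\eta^{(\tau)}_i(t)-\eta^{(\tau)}_i(t-h)\bigr]+\bigl[\xi^{(\tau)}(t)-\xi^{(\tau)}(t-h)\bigr]\eta^{(\tau)}_i(t-h)$ and estimate both summands in $L^1(h,T;Y)$. For the first summand, Cauchy--Schwarz in $t$, the bound above and \eqref{aubin1} give a control by $C\bigl(1+\|\xi^{(\tau)}\|_{L^2(0,T;H^1)}\bigr)\|\eta^{(\tau)}_i(\cdot)-\eta^{(\tau)}_i(\cdot-h)\|_{L^2(h,T;H^1(\Omega)')}$, which goes to $0$ as $h\to 0$ uniformly in $\tau$ thanks to the hypothesis $\tau^{-1}\|\eta^{(\tau)}_i-\pi_\tau\eta^{(\tau)}_i\|_{L^2(\tau,T;H^1(\Omega)')}\le C$ and a routine estimate for time translates of functions that are piecewise constant in time. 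For the second summand, $L^2(\Omega)\hookrightarrow Y$ and the $L^\infty$ bound on $\eta^{(\tau)}_i$ give a control by $C\|\xi^{(\tau)}(\cdot)-\xi^{(\tau)}(\cdot-h)\|_{L^2((h,T)\times\Omega)}$, which goes to $0$ as $h\to 0$ uniformly in $\tau$ \emph{simply because $(\xi^{(\tau)})$ converges in $L^2(0,T;L^2(\Omega))$, hence is a precompact set, and precompact subsets of $L^2$ are uniformly continuous under translations} (Riesz--Fr\'echet--Kolmogorov); no bound on the discrete time derivative of $\xi^{(\tau)}$ itself is used, and none is assumed. Thus $(u^{(\tau)})$ is uniformly equicontinuous in time in $L^1(0,T;Y)$, so the Aubin--Lions--Simon theorem yields relative compactness in $L^1(0,T;L^2(\Omega))$, and the uniform $L^\infty$ bound then improves this to relative compactness in $L^2(0,T;L^2(\Omega))$. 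To identify the limit, note that along a subsequence $u^{(\tau)}\to\zeta_i$ strongly in $L^2$, while the strong convergence $\xi^{(\tau)}\to\xi$ together with the weak-$*$ convergence $\eta^{(\tau)}_i\rightharpoonup\eta_i$ in $L^\infty$ force $u^{(\tau)}\rightharpoonup\xi\eta_i$ weakly in $L^2$; hence $\zeta_i=\xi\eta_i$, and the reduction of the first paragraph applies.

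The step I expect to be the main obstacle is precisely this temporal compactness of $u^{(\tau)}=\xi^{(\tau)}\eta^{(\tau)}_i$: the product carries no controlled discrete time derivative, so the standard piecewise-constant Aubin--Lions lemma (as in \cite{CJL14,Jue14}) does not apply to it directly. The way around it is to peel off the factor $\xi^{(\tau)}$ and tame its temporal oscillations through its own strong convergence rather than through a derivative estimate, while the increments of $\eta^{(\tau)}_i$ are absorbed through the weak-norm bound in \eqref{aubin1} after multiplication by $\xi^{(\tau)}\in H^1(\Omega)\cap L^\infty(\Omega)$. Making this interplay with the piecewise-constant-in-time structure rigorous --- in particular the time-translate estimates for step sizes $h$ comparable to $\tau$ --- is the technical heart of the proof.
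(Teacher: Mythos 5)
Your proposal is correct, but it takes a genuinely different route from the paper in both of its halves. For the base case $f(s)=s$ the paper does not give a proof at all: it simply invokes Lemma~13 of \cite{Jue14} (whose argument goes back to \cite{BDPS10}), whereas you reprove it from scratch via Simon's compactness criterion, splitting the time increment of the product $\xi^{(\tau)}\eta_i^{(\tau)}$, absorbing the increments of $\eta_i^{(\tau)}$ through the dual-norm bound tested against $\xi^{(\tau)}\phi\in H^1(\Omega)$, and controlling the increments of $\xi^{(\tau)}$ by the Riesz--Fr\'echet--Kolmogorov characterization of precompactness; this is essentially a self-contained reconstruction of the cited lemma, and the ``routine'' translate estimate you defer (which yields $\|\eta_i^{(\tau)}(\cdot)-\eta_i^{(\tau)}(\cdot-h)\|_{L^2(h,T;H^1(\Omega)')}\le C\sqrt{h(h+\tau)}$) does check out. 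The more interesting divergence is the bootstrap to general continuous $f$: the paper proceeds by induction on the degree of monomials $\eta^\alpha$ (repeatedly applying the base lemma to $y^{(\tau)}=\xi^{(\tau)}(\eta^{(\tau)})^\beta$, which requires verifying $L^2(0,T;H^1)$ bounds for all these intermediate products) and then approximates $f$ by polynomials via Stone--Weierstrass, while you recover $\eta^{(\tau)}\to\eta$ a.e.\ on $\{\xi\neq 0\}$ by dividing the a.e.-convergent products by $\xi^{(\tau)}$, and conclude by dominated convergence using the uniform $L^\infty$ bounds. Your argument is shorter, avoids the nested subsequence extractions and the gradient bookkeeping for the monomials, and handles arbitrary continuous $f$ in one stroke; its only (harmless) loose end is that on $\{\xi=0\}$ one should either note that $\eta(x,t)\in D$ a.e.\ (true when $D$ is convex, as in the application $D=\overline{\D}$, since weak-$*$ limits stay in closed convex sets) or simply define the limit function to be $0$ there, which is consistent because it is multiplied by $\xi$.
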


Since $(\xi^{(\tau)})$ and $(\eta^{(\tau)}_i)$ are assumed to be bounded in
$L^\infty(0,T;L^\infty(\Omega))$, the strong convergence also holds
in $L^p(0,T;L^p(\Omega))$ for all $p<\infty$. This theorem extends
\cite[Lemma 13]{Jue14}, proved for $f(s)=s$, to arbitrary continuous functions $f$.

\begin{proof}
The proof is based on the compactness result in \cite[Lemma 13]{Jue14},
whose proof goes back to \cite{BDPS10}, and an induction and approximation argument.
We perform the proof in two steps. In the first step $f$ is assumed to be a monomial,
in the second step we approximate an arbitrary continuous function by a polynomial
and apply the Stone-Weierstrass theorem. We set $Q_T=\Omega\times(0,T)$.

{\em Step 1.} Let $f(\eta)=\eta^\alpha := \eta_1^{\alpha_1}\cdots\eta_n^{\alpha_n}$,
where $\alpha=(\alpha_1,\ldots,\alpha_n)\in\N_0^n$ is a multiindex. The proof
is an induction argument on the rank $|\alpha|=\sum_{i=1}^n\alpha_i\ge 0$ of the
multiindex. If $|\alpha|=0$, the statement is trivially true. Let us assume that
$\xi^{(\tau)}(\eta^{(\tau)})^\alpha\to \xi\eta^\alpha$ strongly in $L^2(Q_T)$
as $\tau\to 0$ for all $\alpha\in\N_0^n$ with $|\alpha|\le k$, $k\ge 0$.
Let $\alpha\in N_0^n$ be a multiindex such that $|\alpha|=k+1\ge 1$. Then there
exists an index $i_0\in\{1,\ldots,n\}$ such that $\alpha_{i_0}\ge 1$. Hence, we can
define the multiindex $\beta$ such that $\beta_j=\alpha_j - \delta_{i_0,j}$
for $j=1,\ldots,n$ and $|\beta|=k$.

Introduce $y^{(\tau)} = \xi^{(\tau)}(\eta^{(\tau)})^\beta$ and $y=\xi\eta^\beta$.
Clearly, $(y^{(\tau)})$ is bounded in $L^\infty(0,T;L^\infty(\Omega))$.
Since the multiindex $\beta$ has rank $k$ and thus satisfies the induction
assumption, $y^{(\tau)}\to y$ strongly in $L^2(Q_T)$. We claim
that $(y^{(\tau)})$ and $(y^{(\tau)}\eta^{(\tau)}_{i_0})$ are bounded
in $L^2(0,T;$ $H^1(\Omega))$. Indeed, it follows from \eqref{aubin1} that
$\xi^{(\tau)}\na\eta^{(\tau)}_i = \na(\xi^{(\tau)}\eta^{(\tau)}_i)
- \eta^{(\tau)}_i\na\xi^{(\tau)}$ is uniformly bounded in $L^2(Q_T)$.
As a consequence,
\begin{align*}
  \na y^{(\tau)} 
	&= (\eta^{(\tau)})^\beta\na\xi^{(\tau)} 
	+ \xi^{(\tau)}\na(\eta^\beta) \\
	&= (\eta^{(\tau)})^\beta\na\xi^{(\tau)} 
	+ \sum_{k:\beta_k>0}\beta_k(\eta^{(\tau)}_k)^{\beta_k-1}
	\left(\prod_{i\neq k}(\eta^{(\tau)}_i)^{\beta_i}\right)\xi^{(\tau)}
	\na\eta^{(\tau)}_k
\end{align*}
is uniformly bounded in $L^2(Q_T)$, and $(y^{(\tau)})$ is bounded
in $L^2(0,T;H^1(\Omega))$. In a similar way, we can show that 
$(y^{(\tau)}\eta^{(\tau)}_{i_0})$ is bounded in $L^2(0,T;H^1(\Omega))$.
Applying \cite[Lemma 13]{Jue14} to the sequences $(y^{(\tau)})$ and
$(\eta^{(\tau)}_{i_0})$, we infer that there exists a subsequence, which is not
relabeled, such that $y^{(\tau)}\eta^{(\tau)}_{i_0}\to
y\eta_{i_0}$ strongly in $L^2(Q_T)$, which means, by definition
of $y^{(\tau)}$ and $\beta$, that $\xi^{(\tau)}(\eta^{(\tau)})^\beta\to
\xi\eta^\beta$ strongly in $L^2(Q_T)$.

{\em Step 2.} It follows from the previous step that the statement of the lemma
is true if $f$ is a multivariate polynomial. Let $f\in C^0(D;\R^n)$ be given.
Since $D$ is compact, we may apply the Stone-Weierstrass approximation theorem
to obtain, for any $\eps>0$, a multivariate polynomial $P:D\to\R^n$ such that
$|f(\eta)-P(\eta)|<\eps$ for $\eta\in D$. Since $(\xi^{(\tau)})$ and $\xi$ are 
bounded in $L^\infty$, we have for some $C>0$, which does not depend on $\eps$,
$$
  \|\xi^{(\tau)}f(\eta^{(\tau)}) - \xi^{(\tau)}P(\eta^{(\tau)})\|_{L^2(Q_T)}
	\le C\eps, \quad
	\|\xi P(\eta) - \xi f(\eta)\|_{L^2(Q_T)}
	\le C\eps.
$$
Thus,
\begin{align*}
  \|\xi^{(\tau)} f(\eta^{(\tau)}) - \xi f(\eta)\|_{L^2(Q_T)}
	&\le \|\xi^{(\tau)} f(\eta^{(\tau)}) - \xi^{(\tau)} P(\eta^{(\tau)})\|_{L^2(Q_T)} \\
	&\phantom{xx}{}+ \|\xi^{(\tau)}P(\eta^{(\tau)}) - \xi P(\eta)\|_{L^2(Q_T)}
	+ \|\xi P(\eta) - \xi f(\eta)\|_{L^2(Q_T)} \\
	&\le 2C\eps + \|\xi^{(\tau)}P(\eta^{(\tau)}) - \xi P(\eta)\|_{L^2(Q_T)}.
\end{align*}
Since $P$ is a polynomial, the first step of the proof applies and the
last term on the right-hand side converges to zero as $\tau \to 0$ (at least for
a subsequence), resulting in
$$
  \limsup_{\tau\to 0} \|\xi^{(\tau)} f(\eta^{(\tau)}) - \xi f(\eta)\|_{L^2(Q_T)}
	\le 2C\eps.
$$
Since $\eps>0$ is arbitrary and the left-hand side does not depend on $\eps$,
it must vanish, finishing the proof.
\end{proof}

\begin{lemma}[Generalized Aubin lemma II]\label{lem.aubin2}
Let $(\eta^{(\tau)})$ be a sequence of functions which are piecewise constant
in time with constant step size $\tau>0$ and which satisfy $a\le u^{(\tau)}(x,t)\le b$
for a.e.\ $(x,t)\in\Omega\times(0,T)$ for some $a$, $b\in\R$. 
Furthermore, let $Q\in C^1([a,b];\R^n)$ be a nonnegative increasing convex function
and assume that there exists $C>0$ such that for all $\tau>0$,
\begin{align*}
  \|Q(u^{(\tau)})\|_{L^2(0,T;H^1(\Omega))} 
	+ \|Q'(u^{(\tau)})\|_{L^2(0,T;H^1(\Omega))} &\le C, \\ 
	\tau^{-1}\|u^{(\tau)}-\pi_\tau u^{(\tau)}\|_{L^2(\tau,T;H^1(\Omega)')} &\le C.
\end{align*}
Then there exists $u\in L^2(0,T;H^1(\Omega))$ such that, up to a subsequence,
$$
  u^{(\tau)} \to u \quad\mbox{strongly in }L^p(0,T;L^p(\Omega))
	\quad\mbox{for all }p<\infty.
$$
\end{lemma}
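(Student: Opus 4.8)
The plan is to deduce first the strong $L^2(Q_T)$-compactness of the nonlinear quantity $Q(u^{(\tau)})$ by a Dubinski\u{\i}-type argument, and then to recover $u^{(\tau)}$ itself by inverting $Q$; here $Q_T=\Omega\times(0,T)$ and $\pi_\tau$ is the time-shift operator of Lemma~\ref{lem.aubin1}. Since $a\le u^{(\tau)}\le b$ and $Q,Q'\in C^0([a,b])$, the sequences $(Q(u^{(\tau)}))$ and $(Q'(u^{(\tau)}))$ are bounded in $L^\infty(Q_T)$ and, by hypothesis, in $L^2(0,T;H^1(\Omega))$, while $(u^{(\tau)})$ is bounded in $L^\infty(Q_T)$. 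Passing to a subsequence, $u^{(\tau)}\rightharpoonup u$ weakly* in $L^\infty(Q_T)$ and $Q(u^{(\tau)})\rightharpoonup z$ weakly in $L^2(0,T;H^1(\Omega))$.

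The crucial step is to bound the time increments of $Q(u^{(\tau)})$ in the \emph{strong} topology of $L^2(Q_T)$ using only the \emph{weak} control of the increments of $u^{(\tau)}$ in $H^1(\Omega)'$. The device is the elementary inequality, valid for any convex increasing $Q\in C^1([a,b])$ and all $x,y\in[a,b]$,
\begin{equation}\label{pf.conv}
  |Q(x)-Q(y)|^2\le\big(Q'(x)+Q'(y)\big)\big(Q(x)-Q(y)\big)(x-y),
\end{equation}
which follows from $|Q(x)-Q(y)|\le Q'(\max\{x,y\})\,|x-y|\le(Q'(x)+Q'(y))|x-y|$ (monotonicity of $Q'$ and $Q'\ge0$) together with $(Q(x)-Q(y))(x-y)\ge0$. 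Applying \eqref{pf.conv} pointwise in $x\in\Omega$ with $x=u^{(\tau)}(\cdot,t)$ and $y=u^{(\tau)}(\cdot,t-\tau)$, integrating over $\Omega$, and using the $H^1(\Omega)$--$H^1(\Omega)'$ duality yields
$$
  \|Q(u^{(\tau)})(t)-\pi_\tau Q(u^{(\tau)})(t)\|_{L^2(\Omega)}^2
  \le\|g^{(\tau)}(t)\|_{H^1(\Omega)}\,\|u^{(\tau)}(t)-\pi_\tau u^{(\tau)}(t)\|_{H^1(\Omega)'},
$$
where $g^{(\tau)}:=\big(Q'(u^{(\tau)})+\pi_\tau Q'(u^{(\tau)})\big)\big(Q(u^{(\tau)})-\pi_\tau Q(u^{(\tau)})\big)$. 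Because $H^1(\Omega)\cap L^\infty(\Omega)$ is stable under products, with $\|fg\|_{H^1}\le C(\|f\|_{L^\infty}\|g\|_{H^1}+\|g\|_{L^\infty}\|f\|_{H^1})$, and both factors of $g^{(\tau)}$ are bounded in $L^\infty(Q_T)\cap L^2(0,T;H^1(\Omega))$, the sequence $(g^{(\tau)})$ is bounded in $L^2(\tau,T;H^1(\Omega))$ — this is precisely where the hypothesis that $Q'(u^{(\tau)})$, not merely $Q(u^{(\tau)})$, is bounded in $L^2(0,T;H^1(\Omega))$ is used. Integrating in $t$, applying Cauchy-Schwarz, and invoking $\|u^{(\tau)}-\pi_\tau u^{(\tau)}\|_{L^2(\tau,T;H^1(\Omega)')}\le C\tau$ gives the decay $\|Q(u^{(\tau)})-\pi_\tau Q(u^{(\tau)})\|_{L^2(\tau,T;L^2(\Omega))}\le C\tau^{1/2}$.

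Combining the uniform bound on $Q(u^{(\tau)})$ in $L^2(0,T;H^1(\Omega))$, the compact embedding $H^1(\Omega)\hookrightarrow\hookrightarrow L^2(\Omega)$, and the increment estimate just obtained, the Aubin-Lions compactness lemma for functions that are piecewise constant in time (in the form used in \cite{CJL14,Jue14}) yields, along a subsequence, $Q(u^{(\tau)})\to z$ strongly in $L^2(Q_T)$, hence in $L^p(Q_T)$ for all $p<\infty$ and a.e.\ in $Q_T$. Since $Q$ is increasing and continuous it admits a continuous inverse on $[a,b]$; setting $u:=Q^{-1}(z)$ we get $u^{(\tau)}=Q^{-1}(Q(u^{(\tau)}))\to u$ a.e.\ in $Q_T$, and dominated convergence (using $|u^{(\tau)}|\le\max\{|a|,|b|\}$) upgrades this to $u^{(\tau)}\to u$ strongly in $L^p(Q_T)$ for every $p<\infty$. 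Continuity of $Q$ and $Q'$ then gives $Q(u^{(\tau)})\to Q(u)$ and $Q'(u^{(\tau)})\to Q'(u)$ in $L^p(Q_T)$, so $z=Q(u)$ and, by uniqueness of weak limits, $Q(u),Q'(u)\in L^2(0,T;H^1(\Omega))$; the regularity $u\in L^2(0,T;H^1(\Omega))$ then follows from a gradient bound on $u^{(\tau)}$ (e.g.\ via $\na u^{(\tau)}=Q'(u^{(\tau)})^{-1}\na Q(u^{(\tau)})$ when $Q'$ is bounded below) by weak compactness.

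The main obstacle is the time-regularity step of the second paragraph: turning the control of the increments of the \emph{linear} variable $u^{(\tau)}$ in the \emph{dual} space $H^1(\Omega)'$ into a quantitative $L^2(Q_T)$-decay rate for the increments of the \emph{nonlinear} quantity $Q(u^{(\tau)})$. Inequality \eqref{pf.conv} is what makes this possible, but one must verify carefully that the weight $g^{(\tau)}$ remains bounded in $L^2(0,T;H^1(\Omega))$, which is exactly why one needs the $H^1$-bound on $Q'(u^{(\tau)})$ and cannot dispense with the pointwise bounds $a\le u^{(\tau)}\le b$.
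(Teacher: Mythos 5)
Your pointwise convexity inequality and the resulting estimate
$\|Q(u^{(\tau)})-\pi_\tau Q(u^{(\tau)})\|_{L^2(\tau,T;L^2(\Omega))}\le C\tau^{1/2}$
are correct, and they use the same mechanism as the paper (controlling the increments of the nonlinear quantity by those of $u^{(\tau)}$ in $H^1(\Omega)'$ against the $H^1$-bounded weight built from $Q'(u^{(\tau)})$). The gap is in the next step: a single-step increment bound of order $\tau^{1/2}$ in $L^2(Q_T)$, together with a uniform $L^2(0,T;H^1(\Omega))$ bound, does \emph{not} imply relative compactness in $L^2(Q_T)$, and it does not verify the hypotheses of the discrete Aubin--Lions lemmas of \cite{DrJu12,CJL14,Jue14}, which all require the rate $\tau^{-1}\|v^{(\tau)}-\pi_\tau v^{(\tau)}\|_{L^r(\tau,T;Y)}\le C$, i.e.\ order $\tau$, possibly in a weaker norm $Y$. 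The rate matters: take $\Omega=(0,1)$, $T=1$, $\tau=1/N$, and $v^{(\tau)}(x,t)=\sin(2\pi\lceil t/\tau\rceil/\sqrt{N}\,)\,v(x)$ with a fixed $v\in H^1(\Omega)$. This sequence is bounded in $L^\infty(0,T;H^1(\Omega))$ and satisfies $\|v^{(\tau)}-\pi_\tau v^{(\tau)}\|_{L^2(\tau,T;L^2(\Omega))}\le C\tau^{1/2}$, yet it oscillates in time with period $1/\sqrt{N}\to0$, converges weakly to $0$, and has $L^2(Q_T)$ norm bounded away from $0$; no subsequence converges strongly. So the step ``the increment estimate just obtained $\Rightarrow$ strong convergence of $Q(u^{(\tau)})$'' is not justified as written, and this is exactly the crucial time-compactness step.

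There are two ways out. The paper's route is to keep the increment of $Q(u^{(\tau)})$ in the \emph{dual} norm: testing the convexity inequalities against $\phi_\pm$ for $\phi\in X=H^1(\Omega)\cap L^\infty(\Omega)$ gives $\tau^{-1}\|Q(u^{(\tau)})-\pi_\tau Q(u^{(\tau)})\|_{L^1(\tau,T;X')}\le C$, i.e.\ the full order-$\tau$ rate, which matches the hypotheses of \cite{DrJu12} with $Y=X'$. Alternatively, your argument can be repaired by applying your inequality \eqref{pf.conv} not only to the shift by $\tau$ but to shifts by $h=k\tau$ for arbitrary $k$: telescoping the $H^1(\Omega)'$ increments of $u^{(\tau)}$ gives $\|u^{(\tau)}(\cdot)-u^{(\tau)}(\cdot-h)\|_{L^2(h,T;H^1(\Omega)')}\le Ch$, whence $\|Q(u^{(\tau)})(\cdot)-Q(u^{(\tau)})(\cdot-h)\|_{L^2(h,T;L^2(\Omega))}\le Ch^{1/2}$ uniformly in $\tau$ and $h$; this uniform modulus of continuity of the translates, combined with the spatial $H^1$ bound, lets you invoke Simon's compactness criterion directly. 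Either fix should be spelled out; as it stands the proof is incomplete at its central point. (Two minor remarks: your closing claim $u\in L^2(0,T;H^1(\Omega))$ via $\na u^{(\tau)}=Q'(u^{(\tau)})^{-1}\na Q(u^{(\tau)})$ needs $Q'$ bounded below, which fails precisely in the degenerate situation the lemma is designed for --- the paper does not establish this regularity either and does not use it; and the $L^p$ upgrade for $p<\infty$ correctly uses the uniform $L^\infty$ bounds.)
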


This result generalizes Theorem 3a in \cite{CJL14}, stated for $Q(s)=s^m$ with
$m>0$. A related result has been proved in \cite[Theorem 1]{Mou14}. 
Instead of the bound on $Q'(\eta^{(\tau)})$ it is assumed that the function
$|Q'|$ is bounded from below by a positive value near $\pm\infty$ and that
the set $\{x:Q'(x)=0\}$ is finite. Thus, our result seems to be complementary
to that one in \cite{Mou14}.

\begin{proof}
Let $\phi\in X:=H^1(\Omega)\cap L^\infty(\Omega)$ be a test
function. Then the positive and negative parts of $\phi$ satisfy
$\phi_+=\max\{0,\phi\}$, $\phi_-=\min\{0,\phi\}\in X$. By the convexity of $Q$,
we obtain
\begin{align*}
  \frac{1}{\tau} &  \int_{\Omega}(u^{(\tau)} - \pi_{\tau}u^{(\tau)})
	Q'(u^{(\tau)})\phi_{+}dx
	\geq \frac{1}{\tau}\int_{\Omega}\big(Q(u^{(\tau)}) 
	- \pi_{\tau} Q(u^{(\tau)})\big)\phi_{+}dx, \\
  \frac{1}{\tau} & \int_{\Omega}(u^{(\tau)} - \pi_{\tau}u^{(\tau)}) 
	\pi_{\tau}Q'(u^{(\tau)})\phi_{-}dx
  = \frac{1}{\tau}\int_{\Omega}(\pi_{\tau}u^{(\tau)} - u^{(\tau)})
	Q'(\pi_{\tau}u^{(\tau)})(-\phi_{-})dx \\
  &\geq \frac{1}{\tau}\int_{\Omega} 
	\big(\pi_{\tau}Q(u^{(\tau)}) - Q(u^{(\tau)})\big)(-\phi_{-})dx
  = \frac{1}{\tau}\int_{\Omega}\big(Q(u^{(\tau)}) 
	- \pi_{\tau}Q(u^{(\tau)})\big)\phi_{-}dx.
\end{align*}
Adding both inequalities and taking into account that $\phi = \phi_+ + \phi_-$,
we find that
\begin{align}\label{aub1}
  \frac{1}{\tau} & \int_\Omega\big(Q(u^{(\tau)})-\pi_\tau Q(u^{(\tau)})\big)
	\phi dx \\
	&\le \frac{1}{\tau}\int_\Omega(u^{(\tau)}-\pi_\tau u^{(\tau)})
	\big(Q'(u^{(\tau)})\phi_+ + \pi_\tau Q'(u^{(\tau)})\phi_-\big)dx \nonumber \\
	&\le \frac{1}{\tau}\|u^{(\tau)}-\pi_\tau u^{(\tau)}\|_{H^1(\Omega)'}
	\|Q'(u^{(\tau)})\phi_+ + \pi_\tau Q'(u^{(\tau)})\phi_-\|_{H^1(\Omega)}.
	\nonumber
\end{align}
We estimate:
\begin{align*}
  \| & Q'(u^{(\tau)})\phi_+\|_{H^1(\Omega)}^2
	 = \int_\Omega\big(|Q'(u^{(\tau)})|^2\phi_+^2 + |Q'(u^{(\tau)})\na\phi_+
	+ \phi_+\na Q'(u^{(\tau)})|^2\big)dx \\
	&\le 2\int_\Omega|Q'(u^{(\tau)})|^2\big(\phi_+^2+|\na\phi_+|^2\big)dx
	+ 2\int_\Omega\phi_+^2|\na Q'(u^{(\tau)})|^2 dx \\
	&\le C\big(\|\phi\|_{H^1(\Omega)}^2 + \|\phi\|_{L^\infty(\Omega)}^2
	\|Q'(u^{(\tau)})\|_{H^1(\Omega)}^2\big) \\
	&\le C\big(1+\|Q'(u^{(\tau)})\|_{H^1(\Omega)}\big)^2\|\phi\|_X^2.
\end{align*}
In a similar way, we can verify that
$$
  \|\pi_\tau Q'(u^{(\tau)})\phi_-\|_{H^1(\Omega)}
	\le C\big(1+\|\pi_\tau Q'(u^{(\tau)})\|_{H^1(\Omega)}\big)\|\phi\|_X.
$$
Thus, \eqref{aub1} gives
$$
  \frac{1}{\tau}\int_\Omega\big(Q(u^{(\tau)})-\pi_\tau Q(u^{(\tau)})\big)
	\phi dx \le CF^{(\tau)}(t)\|\phi\|_X,
$$
where
$$
  F^{(\tau)}(t) = \frac{1}{\tau}\|(u^{(\tau)}-\pi_\tau u^{(\tau)})(t)\|_{H^1(\Omega)'}
	\big(1 + \|Q'(u^{(\tau)}(t))\|_{H^1(\Omega)}
	+ \|\pi_\tau Q'(u^{(\tau)})\|_{H^1(\Omega)}\big).
$$
This means that
$$
  \tau^{-1}\|(Q(u^{(\tau)})-\pi_\tau Q(u^{(\tau)}))(t)\|_{X'} \le CF^{(\tau)}(t).
$$
The assumptions of the lemma imply that $(F^{(\tau)})$ is bounded in $L^1(\tau,T)$.
Thus, we obtain a uniform estimate for 
$\tau^{-1}(Q(u^{(\tau)})-\pi_\tau Q(u^{(\tau)}))$
in $L^1(\tau,T;X')$. Because of the bound of $Q(u^{(\tau)})$ 
in $L^2(0,T;H^1(\Omega))$
and the compact embedding $H^1(\Omega)\hookrightarrow L^2(\Omega)$, Aubin's lemma
in the version of \cite{DrJu12} yields the existence of a subsequence, which
is not relabeled, such that, as $\tau\to 0$,
$Q(u^{(\tau)})\to Q^*$ strongly in $L^2(0,T;L^2(\Omega))$ and a.e.\ in
$\Omega\times(0,T)$.
Since $Q$ is strictly increasing, this shows that 
$u^{(\tau)}=Q^{-1}(Q(u^{(\tau)}))\to Q^{-1}(Q^*)$ a.e.\ in $\Omega\times(0,T)$.
We set $u:=Q^{-1}(Q^*)$. Then the $L^\infty$ bound for $(u^{(\tau)})$ and
the a.e.\ convergence yield $u^{(\tau)}\to u$ strongly in $L^p(0,T;L^p(\Omega))$
for all $p<\infty$.
\end{proof}


\subsection{Further results}\label{sec.fisher}

We show that the Fisher information 
$\int_\Omega|\na\sqrt{u}|^2 d\mu$ is subadditive, and we recall a convex Sobolev
inequality.

\begin{lemma}\label{lem.fisher}
Let $\mu$ be an absolutely continuous measure with respect to the Lebesque measure,
and let $f$, $g:\Omega\to [0,\infty)$ be measurable, bounded, positive functions such that
$\sqrt{f}$, $\sqrt{g}\in H^1(\Omega,d\mu)$. Then
$$
  \int_\Omega|\na\sqrt{f+g}|^2 d\mu \le \int_\Omega|\na\sqrt{f}|^2 d\mu
	+ \int_\Omega|\na\sqrt{g}|^2 d\mu.
$$
\end{lemma}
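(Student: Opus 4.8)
The plan is to reduce the statement to the pointwise inequality
\[
  |\na\sqrt{f+g}|^2 \le |\na\sqrt f|^2 + |\na\sqrt g|^2 \quad\text{a.e.\ in }\Omega,
\]
and then to integrate against $d\mu$. The only genuine work is to make sense of $\na\sqrt{f+g}$ and to justify the chain rule for the square root, since $s\mapsto\sqrt s$ fails to be Lipschitz at the origin.

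First I would record the consequences of the hypotheses. Since $\sqrt f,\sqrt g\in H^1(\Omega,d\mu)\cap L^\infty(\Omega)$, the product rule gives $f=(\sqrt f)^2,\ g=(\sqrt g)^2\in H^1(\Omega,d\mu)$ with $\na f=2\sqrt f\,\na\sqrt f$ and $\na g=2\sqrt g\,\na\sqrt g$; hence $u:=f+g\in H^1(\Omega,d\mu)\cap L^\infty(\Omega)$, $u\ge 0$, and $\na u=2(\sqrt f\,\na\sqrt f+\sqrt g\,\na\sqrt g)$. In particular $\na u=0$ a.e.\ on $\{u=0\}=\{f=0\}\cap\{g=0\}$, because $\sqrt f=\sqrt g=0$ there.

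Next I would establish $\sqrt u\in H^1(\Omega,d\mu)$ together with the expected formula for its gradient by a regularization argument. For $\delta>0$ the function $s\mapsto\sqrt{s+\delta}$ is Lipschitz on $[0,\infty)$, so $F_\delta:=\sqrt{u+\delta}\in H^1(\Omega,d\mu)$ with $\na F_\delta=\na u/(2\sqrt{u+\delta})$. The crucial point is the $\delta$-uniform domination on $\{u>0\}$,
\[
  \frac{|\na u|}{2\sqrt{u+\delta}}\le\frac{|\na u|}{2\sqrt u}
	\le\frac{\sqrt f}{\sqrt u}\,|\na\sqrt f|+\frac{\sqrt g}{\sqrt u}\,|\na\sqrt g|
	\le|\na\sqrt f|+|\na\sqrt g|\in L^2(\Omega,d\mu),
\]
while $\na F_\delta=0$ a.e.\ on $\{u=0\}$. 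Dominated convergence then shows $\na F_\delta\to v$ in $L^2(\Omega,d\mu)$ as $\delta\to0$, where $v:=(2\sqrt u)^{-1}\na u$ on $\{u>0\}$ and $v:=0$ on $\{u=0\}$; since also $F_\delta\to\sqrt u$ pointwise and boundedly, one concludes $\sqrt u\in H^1(\Omega,d\mu)$ with $\na\sqrt u=v$.

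Finally comes the elementary pointwise estimate. On $\{u>0\}$, inserting $\na u=2(\sqrt f\,\na\sqrt f+\sqrt g\,\na\sqrt g)$ gives
\[
  \na\sqrt{f+g}=\frac{\sqrt f\,\na\sqrt f+\sqrt g\,\na\sqrt g}{\sqrt{f+g}},
\]
and the Cauchy--Schwarz inequality $|a_1b_1+a_2b_2|^2\le(a_1^2+a_2^2)(|b_1|^2+|b_2|^2)$, applied with $a=(\sqrt f,\sqrt g)\in\R^2$ and $b=(\na\sqrt f,\na\sqrt g)$, yields $|\na\sqrt{f+g}|^2\le|\na\sqrt f|^2+|\na\sqrt g|^2$; on $\{u=0\}$ all three gradients vanish a.e., so the inequality holds trivially there. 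Integrating over $\Omega$ against $d\mu$ finishes the proof. I expect the main obstacle to be the middle step --- the Sobolev chain rule for the square root and the fact that $\na u$ vanishes a.e.\ on $\{u=0\}$ --- whereas the concluding Cauchy--Schwarz argument is routine.
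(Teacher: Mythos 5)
Your proof is correct, but it takes a genuinely different route from the paper. The paper fixes $f$ and interpolates: it sets $F(s)=\int_\Omega|\na\sqrt f|^2d\mu+\int_\Omega|\na\sqrt{sg}|^2d\mu-\int_\Omega|\na\sqrt{f+sg}|^2d\mu$, notes $F(0)=0$, and shows $F'(s)\ge0$ by computing the derivative and recognizing it as $\int_\Omega\bigl|\na\sqrt g-(\sqrt g/\sqrt{f+sg})\na\sqrt{f+sg}\bigr|^2d\mu$, a complete square; the conclusion is $F(1)\ge0$. You instead prove the stronger \emph{pointwise} inequality $|\na\sqrt{f+g}|^2\le|\na\sqrt f|^2+|\na\sqrt g|^2$ a.e.\ via the identity $\na\sqrt{f+g}=(\sqrt f\,\na\sqrt f+\sqrt g\,\na\sqrt g)/\sqrt{f+g}$ and Cauchy--Schwarz in $\R^2$, and then integrate. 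Your approach buys two things: it is more elementary (no differentiation of the functional under the integral sign, no monotonicity argument), and it isolates and honestly addresses the one real technical issue --- the Sobolev chain rule for $s\mapsto\sqrt s$ at the degenerate set $\{f+g=0\}$, which you handle by the $\sqrt{u+\delta}$ regularization with the $\delta$-uniform $L^2$ domination $|\na u|/(2\sqrt{u+\delta})\le|\na\sqrt f|+|\na\sqrt g|$. The paper's computation glosses over exactly this point (it divides by $\sqrt{f+sg}$ freely), so your argument is, if anything, the more careful one; what the paper's homotopy argument buys in exchange is brevity and the aesthetically pleasing identification of the defect $F'(s)$ as an exact square. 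Both are valid, and your pointwise inequality immediately yields the integrated statement for any measure $\mu$.
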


This result was proven in \cite[Section 3.6]{PaUn95} in a slightly different
context. For the convenience of the reader, we present the (short) proof.

\begin{proof}
We define the function $F:[0,1]\to\R$ by
$$
  F(s) = \int_\Omega|\na\sqrt{f}|^2 d\mu + \int_\Omega|\na\sqrt{sg}|^2 d\mu
	- \int_\Omega|\na\sqrt{f+sg}|^2 d\mu, \quad s\in[0,1].
$$
Then $F(0)=0$ and $F'(s)\ge 0$ for all $s\in[0,1]$ since
\begin{align*}
  F'(s) &= \int_{\Omega}|\na \sqrt{g}|^{2}d\mu 
	- \int_{\Omega}\na\sqrt{f+sg}\cdot\na\left( \frac{g}{\sqrt{f+sg}} \right)d\mu\\
  &= \int_{\Omega}|\na \sqrt{g}|^{2}d\mu 
	- \int_{\Omega}\na\sqrt{f+sg}\cdot\left(\frac{2\sqrt{g}\na\sqrt{g}}{\sqrt{f+sg}} 
	- \frac{g}{f+sg}\na\sqrt{f+sg}\right)d\mu\\
  &= \int_{\Omega}|\na \sqrt{g}|^{2}d\mu 
	+ \int_{\Omega}\frac{g}{f+sg}|\na \sqrt{f+sg}|^{2}d\mu 
  - 2 \int_{\Omega}\frac{\sqrt{g}}{\sqrt{f+sg}}\na \sqrt{g}\cdot\na\sqrt{f+sg}d\mu\\
  &= \int_{\Omega}\left| \na \sqrt{g} 
	- \frac{\sqrt{g}}{\sqrt{f+sg}}\na\sqrt{f+sg} \right|^{2}d\mu \geq 0.
\end{align*}
We conclude that $F(1)\ge 0$ which shows the lemma.
\end{proof}

\begin{lemma}\label{lem.csi}
Let $\Omega\subset\R^d$ ($d\ge 1$) be a convex domain and let $g\in C^4$ be a
convex function such that $1/g''$ is concave. Then there exists $c_S>0$
such that for all integrable
functions $u$ with integrable $g(u)$ and $g''(u)|\na u|^2$,
$$
  \frac{1}{|\Omega|}\int_\Omega g(u)dx
	- g\bigg(\frac{1}{|\Omega|}\int_\Omega udx\bigg)
	\le \frac{c_S}{|\Omega|}\int_\Omega g''(u)|\na u|^2 dx,
$$
where $|\Omega|$ denotes the measure of $\Omega$.
\end{lemma}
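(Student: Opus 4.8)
The plan is to establish Lemma~\ref{lem.csi} by the Bakry--Émery semigroup method, following the whole-space argument of \cite{AMTU01} but adapted to the bounded domain $\Omega$ with no-flux boundary conditions. Write $d\mu=dx/|\Omega|$ for the normalized Lebesgue measure on $\Omega$; since $1/g''$ is assumed concave we may take $g''>0$ on the relevant range, and we let $G$ denote an antiderivative of $\sqrt{g''}$, so that $\int_\Omega g''(u)|\na u|^2\,d\mu=\int_\Omega|\na G(u)|^2\,d\mu$. By a truncation-and-mollification argument it suffices to prove the inequality for $u\in C^\infty(\overline\Omega)$ taking values in a compact subinterval on which $g''$ is bounded away from $0$ and $\infty$: for a general admissible $u$ one truncates $u$ into such an interval, mollifies $G(u)$ (convolution with a probability kernel keeps the values in the same interval), and passes to the limit, using that $G(u)\in H^1(\Omega)$, the continuity of $g$, and monotone convergence for $\int_\Omega g''(u)|\na u|^2\,d\mu$; if necessary one also exhausts $\Omega$ from inside by smooth convex domains, which is needed because the computation below involves $\pa\Omega$ explicitly.

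Fix such a $u$, set $\bar u=\frac1{|\Omega|}\int_\Omega u\,dx$, and let $\rho=\rho(x,t)$ solve the heat equation $\pa_t\rho=\Delta\rho$ in $\Omega\times(0,\infty)$ with $\na\rho\cdot\nu=0$ on $\pa\Omega$ and $\rho(\cdot,0)=u$. Parabolic regularity gives $\rho\in C^\infty(\overline\Omega\times(0,\infty))$; mass is conserved, $\frac1{|\Omega|}\int_\Omega\rho(t)\,dx=\bar u$; the maximum principle keeps $\rho(t)$ in the same compact interval as $u$; and $\rho(t)\to\bar u$ exponentially fast as $t\to\infty$. Introduce the relative entropy and the entropy production
\[
  e(t)=\int_\Omega\big(g(\rho(t))-g(\bar u)\big)\,d\mu,\qquad
  I(t)=\int_\Omega g''(\rho(t))\,|\na\rho(t)|^2\,d\mu .
\]
Jensen's inequality gives $e(t)\ge 0$ and $e(\infty)=0$, and differentiating and integrating by parts (the Neumann condition removes the boundary term) yields $e'(t)=-I(t)$, whence
\[
  \frac{1}{|\Omega|}\int_\Omega g(u)\,dx-g\Big(\frac{1}{|\Omega|}\int_\Omega u\,dx\Big)
  =e(0)=e(0)-e(\infty)=\int_0^\infty I(t)\,dt .
\]
Therefore Lemma~\ref{lem.csi} follows once we prove the entropy-production dissipation inequality $I'(t)\le-2\lambda_1 I(t)$ for $t>0$, where $\lambda_1>0$ is the first nonzero eigenvalue of the Neumann Laplacian on $\Omega$ (positive, and on convex $\Omega$ bounded below by $\pi^2/\operatorname{diam}(\Omega)^2$): indeed this gives $I(t)\le I(0)e^{-2\lambda_1 t}$ and hence $e(0)\le I(0)/(2\lambda_1)$, which is the assertion with $c_S=1/(2\lambda_1)$.

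The remaining and decisive step is $I'(t)\le-2\lambda_1 I(t)$. Differentiating $I$, using $\pa_t\rho=\Delta\rho$, and integrating by parts repeatedly, the Neumann condition removes all the arising boundary terms but one, and a weighted Bochner--Reilly identity brings in the full Hessian $D^2\rho$; one obtains a representation of the form
\[
  -\tfrac12 I'(t)=\int_\Omega g''(\rho)\,\mathcal Q(D^2\rho,\na\rho)\,d\mu
  +\frac{1}{|\Omega|}\int_{\pa\Omega}g''(\rho)\,\mathrm{II}(\na\rho,\na\rho)\,dS ,
\]
where $\mathrm{II}$ is the second fundamental form of $\pa\Omega$ and $\mathcal Q$ is a quadratic form in $(D^2\rho,\na\rho)$ whose coefficients depend on $g$ only through $g'''/g''$ and $g''''/g''$. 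The convexity of $\Omega$ enters here: it makes $\mathrm{II}\ge 0$, so the boundary term is nonnegative and can be discarded. The concavity of $1/g''$, equivalent to the pointwise bound $g''g''''\ge 2(g''')^2$, is precisely the algebraic condition under which the $\na\rho\otimes\na\rho$ cross terms in $\mathcal Q$ can be absorbed by a Cauchy--Schwarz estimate, leaving $\mathcal Q(D^2\rho,\na\rho)$ bounded below by a nonnegative multiple of $\|D^2\rho\|^2$; invoking the spectral gap of the Neumann Laplacian to bound $I(t)=\int_\Omega g''(\rho)|\na\rho|^2\,d\mu$ by this surviving expression then produces the factor $\lambda_1$ and yields $-\tfrac12 I'(t)\ge\lambda_1 I(t)$. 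I expect the main obstacle to be exactly this weighted Bochner computation: one must carry it out carefully and verify that the admissibility condition $g''g''''\ge 2(g''')^2$ is what renders the bulk integrand nonnegative once the favorable boundary term has been dropped. The density reduction in the first paragraph is routine but also needs a little care. For the analogous computation in the whole-space setting we refer to \cite{AMTU01}, and the bounded-domain version is the one recalled in \cite{DeFe14}.
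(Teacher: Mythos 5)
The paper does not actually prove this lemma; it is quoted from the literature with pointers to \cite[Prop.~7.6.1]{BGL14} and \cite[Remark 3.8]{AMTU01}. So the only meaningful comparison is with the argument of those references, whose architecture your sketch reproduces: Neumann heat flow $\pa_t\rho=\Delta\rho$, the identities $e'(t)=-I(t)$, $e(\infty)=0$, $e(0)=\int_0^\infty I(t)\,dt$, and the reduction of the lemma to the differential inequality $I'(t)\le-2\lambda I(t)$. All of these preparatory steps are correct.

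The problem is that the one step carrying the entire proof, $I'(t)\le -2\lambda_1 I(t)$, is asserted rather than proved, and the mechanism you propose for it does not work. In the Bakry--\'Emery method the exponential factor comes from the curvature term $\rho_0\Gamma$ in $\Gamma_2\ge\rho_0\Gamma$; for the flat Neumann Laplacian with Lebesgue measure on a convex domain one has $\rho_0=0$, so the weighted Bochner identity (after discarding the nonnegative boundary term) yields only $I'\le 0$. Your substitute --- absorb the cross terms of $\mathcal Q$ using $g''g''''\ge 2(g''')^2$ and keep a multiple of $\|D^2\rho\|^2$, then convert it into $\lambda_1 I$ via the spectral gap --- fails twice. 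First, when $1/g''$ is affine one has equality $g''g''''=2(g''')^2$ and $\mathcal Q$ is the perfect square $\big\|\sqrt{g''}\,D^2\rho+(g'''/\sqrt{g''})\,\na\rho\otimes\na\rho\big\|^2$, which vanishes on the nontrivial set $D^2\rho=-(g'''/g'')\na\rho\otimes\na\rho$; no coercive $\|D^2\rho\|^2$ term survives. This borderline case is exactly the one used in Section \ref{sec.conv}: for $q(s)=s^\alpha$ the relevant $g$ has $g''(s)=\alpha/s$, i.e.\ $1/g''$ is linear. Second, even off the borderline, the inequality $\int_\Omega g''(\rho)\|D^2\rho\|^2dx\ge\lambda_1\int_\Omega g''(\rho)|\na\rho|^2dx$ is a Poincar\'e-type inequality with the $t$-dependent weight $g''(\rho(t))$ and is not a consequence of the spectral gap of the unweighted Neumann Laplacian. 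The lemma is true, but under zero curvature the semigroup proof requires the finite-diameter machinery (reverse local inequalities/Harnack estimates, or a defective inequality tightened by Poincar\'e) rather than the naive insertion of $\lambda_1$; as written, the decisive differential inequality is a genuine gap.
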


A proof can be found in \cite[Prop. 7.6.1]{BGL14} or \cite[Remark 3.8]{AMTU01}.


\section{Proof of Theorem \ref{thm.ex}}\label{sec.ex}

We divide the proof into several steps.

\subsection{Time discretization and regularization of system \eqref{eq}}

We recall the definition of the entropy variable $w=h'(u)$ for $u\in\D$, where
$h$ is defined in \eqref{h}. Lemma \ref{lem.h} shows that $h'$ is invertible, thus
we may define $u=(h')^{-1}(w)$ for $w\in\R^n$ and we may set $u(w)=u$. 
By Lemma \ref{lem.HA},
the matrix $B(w)=A(u)(h'')^{-1}(u)$ is positive definite for all $w\in\R$ and
$u=u(w)$. We introduce a time discretization for \eqref{eq}. 
Let $T>0$, $N\in\N$, and let $\tau=T/N$ be the time step size.
Furthermore, let $0<\eps<1$ be a regularization parameter and let
$m\in\N$ be such that $H^m(\Omega)\hookrightarrow L^\infty(\Omega)$ 
compactly (i.e.\ choose $m>d/2$). Given $w^{k-1}\in H^m(\Omega;\R^n)$, 
we wish to find $w^k\in H^m(\Omega;\R^n)$ which solves 
the discretized and regularized problem
\begin{equation}\label{dis.reg}
  \frac{1}{\tau}\int_\Omega(u(w^k)-u(w^{k-1}))\cdot\phi dx
	+ \int_\Omega\na\phi:B(w^k)\na w^k dx + \tau^2 b_\eps(\phi,w^k) = 0
\end{equation}
for $\phi\in H^m(\Omega;\R^n)$, where
\begin{equation}\label{beps}
  b_\eps(\phi,w^k) = \int_\Omega(\phi\cdot w^k + \na\phi:\na w^k)dx
	+ \eps\sum_{2\le|\alpha|\le m}D^\alpha\phi\cdot D^\alpha w^k dx,
\end{equation}
and $D^\alpha$ is a partial derivative of order $|\alpha|$.
We prove the existence of weak solutions to \eqref{dis.reg}.

\begin{lemma}\label{lem.dis.reg}
Let \eqref{hp.q}-\eqref{hp.p} hold and let $u^0:\Omega\to\D$ be measurable such
that $h(u^0)\in L^1(\Omega)$. Then there exists a sequence of solutions
$w^k\in H^m(\Omega;\R^n)$ to \eqref{dis.reg} satisfying the discrete entropy
inequality
\begin{equation}\label{a.edi}
  \int_\Omega h(u(w^k))dx + \tau\int_\Omega\na w^k:B(w^k)\na w^k dx
	+ \tau^3 b_\eps(w^k,w^k) \le \int_\Omega h(u(w^{k-1}))dx.
\end{equation}
\end{lemma}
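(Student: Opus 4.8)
The plan is to solve \eqref{dis.reg} by the Leray--Schauder fixed-point theorem, working in the entropy variable $w$ and using the discrete entropy estimate itself to supply the a priori bound. By Lemma~\ref{lem.h}, $u=(h')^{-1}$ is defined on all of $\R^n$ and takes values in the bounded set $\D$, so the map $w\mapsto u(w)$ is continuous and bounded, and $B(w)=A(u(w))(h'')^{-1}(u(w))$ is a bounded, positive definite matrix for every $w\in\R^n$. For fixed $y\in L^\infty(\Omega;\R^n)$ and $\sigma\in[0,1]$, I consider the linear problem: find $w\in H^m(\Omega;\R^n)$ with
\begin{equation*}
  \tau\int_\Omega\na\phi:B(y)\na w\,dx + \tau^3 b_\eps(w,\phi)
  = -\sigma\int_\Omega\big(u(y)-u(w^{k-1})\big)\cdot\phi\,dx
  \quad\mbox{for all }\phi\in H^m(\Omega;\R^n).
\end{equation*}
The bilinear form on the left is bounded on $H^m(\Omega;\R^n)$ (using that $A$ and $(h'')^{-1}$ are bounded on $\overline{\D}$), for $\phi=w$ the diffusion term $\tau\int_\Omega\na w:B(y)\na w\,dx$ is nonnegative by Lemma~\ref{lem.HA}, and $\tau^3 b_\eps(w,w)\ge\tau^3\eps\|w\|_{H^m(\Omega)}^2$ for $0<\eps<1$, so the form is coercive; note that it is precisely the regularization $\tau^3 b_\eps$, not the possibly degenerate diffusion term, that provides the coercivity. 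Since $u(y)$ and $u(w^{k-1})$ are bounded, the right-hand side is a bounded linear functional on $H^m(\Omega;\R^n)$, and the Lax--Milgram lemma yields a unique solution $w=:S(y,\sigma)$.

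Next I would check that $S:L^\infty(\Omega;\R^n)\times[0,1]\to L^\infty(\Omega;\R^n)$ is well defined, continuous, compact, and satisfies $S(\cdot,0)=0$. Testing the linear problem with $\phi=w$ and discarding the nonnegative diffusion term gives $\eps\tau^3\|S(y,\sigma)\|_{H^m(\Omega)}^2\le C$ with $C$ depending only on the (uniform) $L^\infty$ bounds for $u(y)$ and $u(w^{k-1})$; with the compact embedding $H^m(\Omega)\hookrightarrow L^\infty(\Omega)$ this gives compactness of $S$ into $L^\infty(\Omega;\R^n)$. Continuity follows from a standard compactness-and-uniqueness argument: if $(y_j,\sigma_j)\to(y,\sigma)$, then $B(y_j)\to B(y)$ and $u(y_j)\to u(y)$ uniformly (continuity of $B$ and $u$ on bounded sets), the $H^m$ bound gives $w_j=S(y_j,\sigma_j)\rightharpoonup w$ in $H^m(\Omega;\R^n)$ along a subsequence, one passes to the limit in the linear equation (a product of weakly and strongly convergent factors in $L^2$), identifies $w$ as the unique solution $S(y,\sigma)$, and the whole sequence converges by uniqueness. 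Finally $S(y,0)=0$ is immediate.

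To apply the Leray--Schauder theorem it remains to bound the fixed-point set uniformly in $\sigma$. If $w=S(w,\sigma)$, testing \eqref{dis.reg} with $\phi=w$ gives
\begin{equation*}
  \tau\int_\Omega\na w:B(w)\na w\,dx + \tau^3 b_\eps(w,w)
  = -\sigma\int_\Omega\big(u(w)-u(w^{k-1})\big)\cdot w\,dx.
\end{equation*}
Since $w=h'(u(w))$ and $h$ is convex (Lemma~\ref{lem.h}), $(u(w)-u(w^{k-1}))\cdot w\ge h(u(w))-h(u(w^{k-1}))$; using $h\ge 0$ and $\sigma\le 1$ the right-hand side is bounded by $\int_\Omega h(u(w^{k-1}))\,dx<\infty$ (finite by the induction hypothesis over $k$, the base case relying on $h(u^0)\in L^1(\Omega)$ for the approximated initial datum). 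Dropping the nonnegative terms on the left yields $\eps\tau^3\|w\|_{H^m(\Omega)}^2\le\int_\Omega h(u(w^{k-1}))\,dx$, hence an $L^\infty(\Omega)$ bound on $w$ independent of $\sigma$. The Leray--Schauder theorem then produces a fixed point $w^k=S(w^k,1)$, which by construction solves \eqref{dis.reg}.

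The discrete entropy inequality follows from the same computation: with $\sigma=1$ and $\phi=w^k$ in \eqref{dis.reg}, multiplying by $\tau$ and applying $(u(w^k)-u(w^{k-1}))\cdot w^k\ge h(u(w^k))-h(u(w^{k-1}))$ gives exactly
\begin{equation*}
  \int_\Omega h(u(w^k))\,dx + \tau\int_\Omega\na w^k:B(w^k)\na w^k\,dx
  + \tau^3 b_\eps(w^k,w^k) \le \int_\Omega h(u(w^{k-1}))\,dx,
\end{equation*}
which is \eqref{a.edi}; in particular $h(u(w^k))\in L^1(\Omega)$, so the induction closes. The only genuinely delicate point is the degeneracy of $B$: although $v^\top B(u)v>0$ for $u\in\D$, there is no uniform positive lower bound as $u\to\pa\D$, so the full coercivity must be extracted from $\tau^3 b_\eps$, and one must be careful to \emph{keep}, rather than discard, the diffusion dissipation term when reading off \eqref{a.edi} (it is needed in the later passage to the limit).
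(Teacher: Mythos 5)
Your proposal is correct and follows essentially the same route as the paper: linearize and freeze the coefficients, solve by Lax--Milgram using the positive semi-definiteness of $B$ (Lemma \ref{lem.HA}) plus the coercivity supplied by the $\eps$-regularization in $b_\eps$, obtain the $\sigma$-uniform bound on fixed points from the convexity inequality $(u(w)-u(w^{k-1}))\cdot w\ge h(u(w))-h(u(w^{k-1}))$, apply Leray--Schauder, and read off \eqref{a.edi} from the same test-function computation at $\sigma=1$. The only differences are cosmetic (multiplying the equation by $\tau$ and spelling out the continuity/compactness of $S$, which the paper delegates to \cite{Jue14}).
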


\begin{proof}
The idea is to apply the Leray-Schauder fixed-point theorem.
Let $y\in L^\infty(\Omega;\R^n)$ and $\eta\in[0,1]$ be given. We first solve
the linear problem
\begin{equation}\label{a.a}
  a(w,\phi) = F(\phi)\quad\mbox{for all }\phi\in H^m(\Omega;\R^n),
\end{equation}
where
\begin{align*}
  a(w,\phi) &= \int_\Omega \na\phi:B(y)\na w dx + \tau^2 b_\eps(w,\phi), \\
	F(\phi) &= -\frac{\eta}{\tau}\int_\Omega(u(y)-u(w^{k-1}))\cdot\phi dx.
\end{align*}
The forms $a$ and $F$ are bounded on $H^m(\Omega;\R^n)$. The
matrix $B(y)=A(u(y))h''(u(y))^{-1}$ is positive semi-definite,
$$
  v^\top B(y)v = [h''(u(y))^{-1}v]^\top h''(u(y))A(u(y))[h''(u(y))^{-1} v]\ge 0
$$
for all $v\in\R^n$, thanks to \eqref{vHAv}. Hence, 
the bilinear form $a$ is coercive:
$$
  a(w,w) \ge \eps\tau^2\|w\|_{H^m(\Omega)}^2 \quad\mbox{for }w\in H^m(\Omega;\R^n).
$$
Therefore, we can apply the Lax-Milgram lemma to infer the existence of a unique
solution $w\in H^m(\Omega;\R^n)\hookrightarrow L^\infty(\Omega;\R^n)$ to 
\eqref{a.a}. This defines the fixed-point operator $S:L^\infty(\Omega;\R^n)
\times[0,1] \to L^\infty(\Omega;\R^n)$, $S(y,\eta)=w$, where $w$ solves \eqref{a.a}.

It holds that $S(y,0)=0$ for all $y\in L^\infty(\Omega;\R^n)$. Furthermore,
standard arguments show that $S$ is continuous (see e.g.\
the proof of Lemma 5 in \cite{Jue14}). It remains to prove a uniform bound for
all fixed points $S(\cdot,\eta)$ in $L^\infty(\Omega;\R^n)$. Let 
$w\in L^\infty(\Omega;\R^n)$ be such a fixed point. Then $w$ solves
\eqref{a.a} with $y$ replaced by $w$. With the test function $\phi=w$, we find that
\begin{equation}\label{aux1}
  \frac{\eta}{\tau}\int_\Omega(u(w)-u(w^{k-1}))\cdot w dx
	+ \int_\Omega\na w:B(w)\na wdx + \tau^2 b_\eps(w,w) = 0.
\end{equation}
The convexity of $h$ implies that $h(x)-h(y)\le h'(u)\cdot(x-y)$ for all
$x$, $y\in\D$. Choosing $x=u(w)$ and $y=u(w^{k-1})$ and employing
$h'(u(w))=w$, this gives
$$
  \frac{\eta}{\tau}\int_\Omega(u(w)-u(w^{k-1}))\cdot w dx
	\ge \frac{\eta}{\tau}\int_\Omega \big(h(u(w))-h(u(w^{k-1}))\big)dx.
$$
Taking into account the positive semi-definiteness of $B(w)$, we infer
from \eqref{aux1} that
$$
  \eta\int_\Omega h(u(w))dx + \eps\tau^3\|w\|_{H^m(\Omega)}^2
	\le \eta\int_\Omega h(u(w^{k-1}))dx.
$$
This yields an $H^m$ bound for $w$ uniform in $\eta$ (but not uniform in
$\eps$ and $\tau$). By the Leray-Schauder fixed-point theorem , we conclude
the existence
of a solution $w\in H^m(\Omega;\R^n)$ to \eqref{a.a} with $y$ replaced by
$w$ and $\eta=1$.
\end{proof}

We derive some a priori estimates uniform in $\eps$ and $\tau$. In the following,
we set $u^k=u(w^k)$ for $k\ge 1$, where $(w^k)$ solves \eqref{dis.reg}.

\begin{lemma}\label{lem.edi}
Under the assumptions of Lemma \ref{lem.dis.reg}, there exists a constant
$C>0$ such that for all $\eps$, $\tau>0$,
\begin{align}\label{edi}
  \int_\Omega h(u^k)dx &+ 4\tau p_0\sum_{j=1}^k\int_\Omega q(u_{n+1}^j)\sum_{i=1}^n
	|\na (u_i^j)^{1/2}|^2 dx \\
	&{}+ 4\tau p_0\delta\sum_{j=1}^k\int_\Omega
	|\na q(u_{n+1}^j)^{1/2}|^2 dx + \tau^3\sum_{j=1}^k b_\eps(w^j,w^j)
	\le \int_\Omega h(u^0)dx, \nonumber
\end{align}
where $p_0$ and $\delta$ are defined in \eqref{delta}.
\end{lemma}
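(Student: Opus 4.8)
The plan is to sum the discrete entropy inequality \eqref{a.edi} over the first $k$ time steps and then to bound the entropy production from below by Lemma \ref{lem.HA}. Summing \eqref{a.edi} for $j=1,\ldots,k$, the entropy contributions telescope, giving
$$
  \int_\Omega h(u^k)dx + \tau\sum_{j=1}^k\int_\Omega\na w^j:B(w^j)\na w^j dx
	+ \tau^3\sum_{j=1}^k b_\eps(w^j,w^j) \le \int_\Omega h(u^0)dx .
$$
Since each $b_\eps(w^j,w^j)\ge 0$ (it equals the $\eps$-weighted $H^m$-type square of $w^j$), it remains to prove that, for each $j$,
$$
  \int_\Omega\na w^j:B(w^j)\na w^j dx
	\ge 4p_0\int_\Omega q(u_{n+1}^j)\sum_{i=1}^n|\na(u_i^j)^{1/2}|^2 dx
	+ 4p_0\delta\int_\Omega|\na q(u_{n+1}^j)^{1/2}|^2 dx ,
$$
and to insert this into the telescoped inequality.

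Before doing so I would record that at this regularized level the problem is not degenerate. Since $w^j\in H^m(\Omega;\R^n)$ with $m>d/2$, the function $w^j$ is continuous on $\overline\Omega$, and because $(h')^{-1}$ maps $\R^n$ into the \emph{open} set $\D$ (Lemma \ref{lem.h}), $u^j=u(w^j)$ takes values in a compact subset of $\D$; in particular there is $c>0$ with $u_i^j\ge c$ for $i=1,\ldots,n$ and $u_{n+1}^j\ge c$ on $\overline\Omega$. Hence $u^j\in H^1(\Omega;\R^n)$ with $\na w^j=h''(u^j)\na u^j$, and, recalling $B(w^j)=A(u^j)h''(u^j)^{-1}$ and the symmetry of $h''$, the computation behind \eqref{dHdt} gives
$$
  \na w^j:B(w^j)\na w^j = \na u^j:h''(u^j)A(u^j)\na u^j \quad\mbox{a.e.\ in }\Omega .
$$

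It then suffices to apply the pointwise bound \eqref{vHAv} of Lemma \ref{lem.HA} with $v=\partial_\ell u^j$ and to sum over $\ell=1,\ldots,d$. Using $u_{n+1}^j=1-\sum_{i=1}^n u_i^j$, one has $\sum_\ell\big(\sum_{i=1}^n\partial_\ell u_i^j\big)^2=|\na u_{n+1}^j|^2$, and the chain rule yields $|\na(u_i^j)^{1/2}|^2=|\na u_i^j|^2/(4u_i^j)$ and $|\na q(u_{n+1}^j)^{1/2}|^2=q'(u_{n+1}^j)^2|\na u_{n+1}^j|^2/(4q(u_{n+1}^j))$; these identities are licit because, by \eqref{hp.q}, $q\in C^3$ is strictly positive on $(0,1)$ and the arguments $u_i^j$, $u_{n+1}^j$ stay in a fixed compact subinterval of $(0,1)$. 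Combining this with \eqref{vHAv} yields precisely the claimed lower bound, and hence \eqref{edi}. The argument is essentially bookkeeping once Lemma \ref{lem.HA} is in hand; the only point that requires a little care is the justification of these chain-rule identities for $(u_i^j)^{1/2}$ and $q(u_{n+1}^j)^{1/2}$, which is exactly why one first observes that at this approximation level $u^j$ is valued in a compact subset of $\D$ so that no degeneracy has yet occurred.
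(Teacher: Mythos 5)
Your proof is correct and follows essentially the same route as the paper: telescope the discrete entropy inequality \eqref{a.edi} using the identity $\na w^j:B(w^j)\na w^j=\na u^j:h''(u^j)A(u^j)\na u^j$, then apply the lower bound \eqref{vHAv} of Lemma \ref{lem.HA} together with $|\sum_{i=1}^n\na u_i^j|^2=|\na u_{n+1}^j|^2$ and the chain-rule identities for the square roots. Your additional observation that $u^j$ takes values in a compact subset of $\D$ at the regularized level (so the chain rule for $(u_i^j)^{1/2}$ and $q(u_{n+1}^j)^{1/2}$ is licit) is a point the paper leaves implicit, and it is a worthwhile clarification.
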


\begin{proof}
By Lemma \ref{lem.dis.reg}, the sequence $(w^k)$ satisfies \eqref{dis.reg}.
Then, taking into account the identity 
$\na w^k:B(w^k)\na w^k=\na u^k:h''(u^k)A(u^k)\na u^k$, we deduce that
$$
  \int_\Omega h(u^k)dx + \tau\int_\Omega\na u^k:h''(u^k)A(u^k)\na u^k dx
	+ \tau^3 b_\eps(w^k,w^k) \le \int_\Omega h(u^{k-1})dx.
$$
Resolving this recursion yields
\begin{align*}
  \int_\Omega h(u^k)dx + \sum_{j=1}^k\tau\int_\Omega\na u^j:h''(u^j)A(u^j)\na u^j dx
	+ \tau^3\sum_{j=1}^k b_\eps(w^j,w^j) \le \int_\Omega h(u^0)dx.
\end{align*}
Then the conclusion follows from Lemma \ref{lem.HA} and 
$|\sum_{i=1}^n\na u_i^j|^2=|\na u_{n+1}^j|^2$.
\end{proof}


\subsection{The limit $\eps\to 0$}\label{sec.eps}

Let $(w^k)$ be a sequence of solutions to \eqref{dis.reg}.
We fix $k\in\{1,\ldots,n\}$ and set $u_i^{(\eps)}=u_i^k$ ($i=1,\ldots,n+1$)
and $w_i^{(\eps)}=w_i^k$ ($i=1,\ldots,n$). The identity
\begin{align*}
  (B(w^k)\na w^k)_{i} 
	&= (A(u^k)\na u^k)_{i} \\
	&= q(u^{k}_{n+1})^{1/2}\na \big( u_{i}^{k}p_{i}(u^{k})q(u^{k}_{n+1})^{1/2} \big)
  - 3 u_{i}^{k}p_{i}(u^{k})q(u^{k}_{n+1})^{1/2}\na q(u^{k}_{n+1})^{1/2}
\end{align*}
shows that $u^k$ solves
\begin{align}\label{epsw}
  \frac{1}{\tau}\int_\Omega & (u^k-u^{k-1})\cdot\phi dx 
  + \sum_{i=1}^{n}\int_\Omega \big[q(u^j_{n+1})^{1/2}
	\na \big( u_{i}^j p_{i}(u^j)q(u^j_{n+1})^{1/2} \big) \\
  &{} - 3 u_{i}^j p_{i}(u^j)q(u^j_{n+1})^{1/2}
	\na q(u^j_{n+1})^{1/2}\big]
  \cdot \na\phi_{i} dx + \tau^{2} b_{\eps}(w^j,\phi) = 0 \nonumber
\end{align}
for all $\phi=(\phi_1,\ldots,\phi_n)\in H^m(\Omega;\R^n)$. We wish to pass to
the limit $\eps\to 0$ in \eqref{epsw}.

By Lemma \ref{lem.edi} and definition \eqref{beps} of $b_\eps$, we have
\begin{equation}\label{est.w}
  \eps\tau^3\sum_{j=1}^k\|w^j\|_{H^m(\Omega)}^2 
	+ \tau^3\sum_{j=1}^k\|w^j\|_{H^1(\Omega)}^2 \le C,
\end{equation}
where here and in the following, $C>0$ denotes a generic constant independent of
$\eps$ and $\tau$. 
Thus, because of the boundedness of $(h'')^{-1}$ (see Lemma \ref{lem.h}), 
$$
  \|\na u^{(\eps)}\|_{L^2(\Omega)} 
	= \|(h''(u^{(\eps)}))^{-1}\na w^{(\eps)}\|_{L^2(\Omega)}
	\le C\|\na w^{(\eps)}\|_{L^2(\Omega)} \le C\tau^{-3/2}.
$$
Together with the $L^\infty$ bound for $(u^{(\eps)})$, this implies that
$$
  \|u^{(\eps)}\|_{H^1(\Omega)} \le C\tau^{-3/2}.
$$
Therefore, up to subsequences, as $\eps\to 0$,
\begin{equation*}
  u^{(\eps)}\rightharpoonup u\quad\mbox{weakly in }H^1(\Omega), \quad
	u^{(\eps)}\to u\quad\mbox{strongly in }L^2(\Omega)\mbox{ and a.e. in }\Omega,
\end{equation*}
since $H^1(\Omega)$ embeddes compactly into $L^2(\Omega)$. We infer that
$u_{n+1}^{(\eps)}=1-\sum_{i=1}^n u_i^{(\eps)}\to u_{n+1}:=1-\sum_{i=1}^n u_i$ 
strongly in $L^2(\Omega)$ and a.e.\ in $\Omega$. 
The $L^\infty$ and $H^1$ bounds for $(u^{(\eps)})$ as well as the $L^2$
bound for $\na q(u_{n+1}^{(\eps)})^{1/2}$ in \eqref{edi} show that
\begin{align*}
  \na\big( & u_{i}^{(\eps)} p_{i}(u^{(\eps)}) q(u_{n+1}^{(\eps)})^{1/2})\big) \\ 
	&= u_{i}^{(\eps)} p_{i}(u^{(\eps)}) \na q(u_{n+1}^{(\eps)})^{1/2}
  + q(u_{n+1}^{(\eps)})^{1/2}\sum_{j=1}^{n}\left(
  \delta_{ij}p_{i}(u^{(\eps)}) + u_{i}^{(\eps)}\frac{\pa p_{i}}{\pa u_{j}}(u^{(\eps)})
  \right)\na u^{(\eps)}_{j}
\end{align*}
is uniformly bounded in $L^2(\Omega)$ and hence,
$$
  \|u_{i}^{(\eps)} p_{i}(u^{(\eps)}) q(u_{n+1}^{(\eps)})^{1/2}\|_{H^1(\Omega)}
	\le C\tau^{-1/2}.
$$
We employ the a.e.\ convergence of $(u^{(\eps)})$ and $(u_{n+1}^{(\eps)})$ and the
continuity of $p_i$ and $q$ to obtain
$$
  u_{i}^{(\eps)} p_{i}(u^{(\eps)}) q(u_{n+1}^{(\eps)})^{1/2})
	\to u_ip_i(u)q(u_{n+1})^{1/2} \quad\mbox{a.e. in }\Omega,
$$
and, by the dominated convergence theorem, strongly in $L^2(\Omega)$.
Thus, using the $H^1$ bound,
$$
  u_{i}^{(\eps)} p_{i}(u^{(\eps)}) q(u_{n+1}^{(\eps)})^{1/2}
	\rightharpoonup u_ip_i(u)q(u_{n+1})^{1/2} \quad\mbox{weakly in }H^1(\Omega).
$$
Similar arguments, using the uniform estimates coming from \eqref{edi}, show that
\begin{align}\label{conv.q}
  q(u_{n+1}^{(\eps)})^{1/2}\to q(u_{n+1})^{1/2} &\quad\mbox{strongly in }L^2(\Omega)
  \mbox{ and weakly in }H^1(\Omega), \\
	q(u_{n+1}^{(\eps)})^{1/2}(u_i^{(\eps)})^{1/2}\rightharpoonup
	q(u_{n+1})^{1/2}u_i^{1/2} &\quad\mbox{weakly in }H^1(\Omega). \label{conv.qu}
\end{align}
It follows from the bound \eqref{est.w} that, up to subsequences,
$$
  \eps w^{(\eps)} \to 0\quad\mbox{strongly in }H^m(\Omega), \quad
	w^{(\eps)}\rightharpoonup w\quad\mbox{weakly in }H^1(\Omega).
$$

We set $u^k:=u$. The above convergences holds for all $k=1,\ldots,N$,
where $T=N\tau$. Thus, we obtain a sequence of limit functions $(u^j)$.
The above convergence results are sufficient to pass to the limit $\eps\to 0$
in \eqref{epsw}, resulting in
\begin{align}\label{tauw}
  \frac{1}{\tau} & \int_\Omega (u^k-u^{k-1})\cdot\phi dx 
  + \sum_{i=1}^{n}\int_\Omega \big[q(u^j_{n+1})^{1/2}
	\na \big( u_{i}^j p_{i}(u^j)q(u^j_{n+1})^{1/2} \big) \\
  &{} - 3 u_{i}^jp_{i}(u)q(u^j_{n+1})^{1/2}
	\na q(u_{n+1}^j)^{1/2}\big]
  \cdot \na\phi_{i} dx + \tau^{2}\int_\Omega(w\cdot\phi + \na w:\na\phi)dx 
	= 0 \nonumber
\end{align}
for $\phi\in H^m(\Omega;\R^n)$. By density,
this relation also holds for all $\phi\in H^1(\Omega;\R^n)$.
Note that generally we cannot identify $w$ with $(h')^{-1}(u)$ anymore
but this is not needed in the remaining proof. 

Finally, we wish to pass to the limit $\eps\to 0$ in \eqref{edi}, where
$u^k$ has to be replaced by $u^{(\eps)}$. Since
\begin{equation}\label{aux.q}
  q(u_{n+1}^{(\eps)})^{1/2}\na(u_i^{(\eps)})^{1/2}
	= \na\big(q(u_{n+1}^{(\eps)})^{1/2}(u_i^{(\eps)})^{1/2}\big)
	- (u_i^{(\eps)})^{1/2}\na q(u_{n+1}^{(\eps)})^{1/2},
\end{equation}
the strong convergence $(u_i^{(\eps)})^{1/2}\to u_i^{1/2}$ in $L^4(\Omega)$
and the weak convergences \eqref{conv.q} and \eqref{conv.qu} imply that
\begin{align*}
  q(u_{n+1}^{(\eps)})^{1/2}\na(u_i^{(\eps)})^{1/2}
	&\rightharpoonup \na\big(q(u_{n+1})^{1/2}u_i^{1/2}\big) 
	- u_i^{1/2}\na q(u_{n+1})^{1/2} \\
	&= q(u_{n+1})^{1/2}\na u_i^{1/2} \quad\mbox{weakly in }L^1(\Omega).
\end{align*}
In fact, since by \eqref{edi}, 
$$
  \|q(u_{n+1}^{(\eps)})^{1/2}\na (u_i^{(\eps)})^{1/2}\|_{L^2(\Omega)} 
	\le C\tau^{-1/2},
$$
the above weak convergence also holds in $L^2(\Omega)$. In particular, by the
weak lower semicontinuity of the $L^2$ norm,
\begin{align*}
  \liminf_{\eps\to 0}\int_\Omega q(u_{n+1}^{(\eps)})|\na (u_i^{(\eps)})^{1/2}|^2 dx
	&\ge \int_\Omega q(u_{n+1})|\na u_i^{1/2}|^2 dx, \\
	\liminf_{\eps\to 0}\int_\Omega |\na q(u_{n+1}^{(\eps)})^{1/2}|^2 dx
	&\ge \int_\Omega |\na q(u_{n+1})|^2 dx, \\
	\liminf_{\eps\to 0}\|w^{(\eps)}\|_{H^1(\Omega)}^2
	&\ge \|w\|_{H^1(\Omega)}^2.
\end{align*}
Recall that $u^k=u$ and $w^k=w$. 
Passing to the limit inferior $\eps\to 0$ in \eqref{edi} and observing that
$b_\eps(w^{(\eps)},w^{(\eps)})\ge \|w^{(\eps)}\|_{H^1(\Omega)}^2$, we infer that
\begin{align}\label{edi2}
  \int_\Omega & h(u^k)dx + 4\tau p_0\sum_{j=1}^k\int_\Omega q(u^j_{n+1})\sum_{i=1}^n
	|\na (u_i^j)^{1/2}|^2 dx \\
	&{}+ 4\tau p_0\delta\sum_{j=1}^k\int_\Omega	|\na q(u_{n+1})^{1/2}|^2 dx 
	+ \tau^3\sum_{j=1}^k\|w^j\|_{H^1(\Omega)}^2
  \le \int_\Omega h(u^0)dx. \nonumber
\end{align}


\subsection{The limit $\tau\to 0$}\label{sec.tau}

We set $u^{(\tau)}(x,t) = u^k(x)$ and $w^{(\tau)}(x,t)=w^k(x)$ for $x\in\Omega$,
$t\in((k-1)\tau,k\tau]$. Equation \eqref{tauw} can be formulated as
\begin{align}
  \frac{1}{\tau} & \int_\tau^T\int_\Omega(u^{(\tau)}-\pi_\tau u^{(\tau)})\cdot
	\phi dxdt + \sum_{i=1}^n\int_\tau^T\int_\Omega\big[q(u_{n+1}^{(\tau)})^{1/2}
	\na\big(u_i^{(\tau)}p_i(u^{(\tau)})q(u_{n+1}^{(\tau)})^{1/2}\big) \nonumber \\
	&{}- 3u_i^{(\tau)} p_i(u^{(\tau)})q(u_{n+1}^{(\tau)})^{1/2}\na q(u_{n+1})^{1/2}
	\big]\cdot\na\phi_i dxdt \label{weak.tau} \\
	&{}+ \tau^2\int_\tau^T\int_\Omega(w^{(\tau)}\cdot\phi
	+ \na w^{(\tau)}:\na\phi)dxdt = 0 \nonumber 
\end{align}
for all $\phi(t)\in H^1(\Omega;\R^n)$ being piecewise constant in time and, by
density, for all $\phi\in L^2(0,T;H^1(\Omega))$. Inequality \eqref{edi2} becomes
\begin{align*}
  \int_\Omega & h(u^{(\tau)}(T))dx + 4p_0\int_0^T\int_\Omega
	q(u_{n+1}^{(\tau)})\sum_{i=1}^n|\na(u_i^{(\tau)})^{1/2}|^2 dxdt \\
	&{}+ 4p_0\delta\int_0^T\int_\Omega|\na q(u_{n+1}^{(\tau)})^{1/2}|^2 dxdt
	+ \tau^2\int_0^T\|w^{(\tau)}\|_{H^1(\Omega)}^2 dt \le \int_\Omega h(u^0)dx.
\end{align*}
This gives the following uniform estimates:
\begin{align}
  \|q(u_{n+1}^{(\tau)})^{1/2}\na(u_i^{(\tau)})^{1/2}\|_{L^2(0,T;L^2(\Omega))}
	+ \|q(u_{n+1}^{(\tau)})^{1/2}\|_{L^2(0,T;H^1(\Omega))}&\le C, \label{naq} \\
	\tau\|w^{(\tau)}\|_{L^2(0,T;H^1(\Omega))} &\le C. \label{w.H1}
\end{align}
These bounds as well as the $L^\infty$ bound for $(u_i^{(\tau)})$ show that
\begin{align*}
  \na \big(u_{i}^{(\tau)} & p_{i}(u^{(\tau)}) q(u_{n+1}^{(\tau)})^{1/2}\big)
	= u_{i}^{(\tau)} p_{i}(u^{(\tau)}) \na q(u_{n+1}^{(\tau)})^{1/2} \\
  &\phantom{xx}{}+ q(u_{n+1}^{(\tau)})^{1/2}\sum_{j=1}^{n}\left(
  \delta_{ij}p_{i}(u^{(\tau)}) + u_{i}^{(\tau)}\frac{\pa p_{i}}{\pa u_{j}}(u^{(\tau)})
  \right)\na u^{(\tau)}_{j} \\
  &= u_{i}^{(\tau)} p_{i}(u^{(\tau)}) \na q(u_{n+1}^{(\tau)})^{1/2} \\
  &\phantom{xx}{}+ 2 \sum_{j=1}^{n}(u_{j}^{(\tau)})^{1/2}\left(
  \delta_{ij}p_{i}(u^{(\tau)}) + u_{i}^{(\tau)}\frac{\pa p_{i}}{\pa u_{j}}(u^{(\tau)})
  \right)q(u_{n+1}^{(\tau)})^{1/2}\na(u_{j}^{(\tau)})^{1/2}
\end{align*}
is uniformly bounded in $L^2(0,T;L^2(\Omega))$ and consequently,
\begin{equation}\label{upq.H1}
  \|u_{i}^{(\tau)} p_{i}(u^{(\tau)}) q(u_{n+1}^{(\tau)})^{1/2}
	\|_{L^2(0,T;H^1(\Omega))} \le C.
\end{equation}
Similarly, \eqref{naq} yields the estimate
\begin{equation}\label{uq.H1}
  \|(u_i^{(\tau)})^{1/2}q(u_{n+1}^{(\tau)})^{1/2}\|_{L^2(0,T;H^1(\Omega))} \le C.
\end{equation}
Thus, the $L^\infty$ bound on $(u_i^{(\tau)})$ and estimates 
\eqref{naq} and \eqref{w.H1} give
\begin{align}
  \tau^{-1} & \|u^{(\tau)}-\pi_\tau u^{(\tau)}\|_{L^2(\tau,T;H^1(\Omega)')} 
	\nonumber \\
	&\le \sum_{i=1}^n\|q(u_{n+1}^{(\tau)})^{1/2}\|_{L^\infty(\tau,T;L^\infty(\Omega))}
	\big\|\na\big(u_i^{(\tau)} p_i(u^{(\tau)})
	q(u_{n+1}^{(\tau)})^{1/2}\big)\big\|_{L^2(\tau,T;L^2(\Omega))} \label{hyp.a1} \\
	&\phantom{xx}{}+ 3\sum_{i=1}^n\|u_i^{(\tau)}p_i(u^{(\tau)})q(u_{n+1})^{1/2}
	\|_{L^\infty(\tau,T;L^\infty(\Omega))}\|\na q(u_{n+1})^{1/2}
	\|_{L^2(0,T;L^2(\Omega))} \nonumber \\
	&\phantom{xx}{}+ \tau^2\|w^{(\tau)}\|_{L^2(\tau,T;H^1(\Omega))}^2
	\le C. \nonumber
\end{align}

Now, we define the function $Q(s) = \int_0^s q(\sigma)^{1/2}d\sigma$ for
$s\in[0,1]$. Then $Q\in C^1([0,1])$ is nonnegative, convex, and strictly increasing.
It holds (see \eqref{naq})
\begin{equation}\label{hyp.a2}
  \|Q'(u_{n+1}^{(\tau)})\|_{L^2(0,T;H^1(\Omega))} \le C.
\end{equation}
By assumption \eqref{hp.q}, $q(u_{n+1}^{(\tau)})/q'(u_{n+1}^{(\tau)})$ is 
uniformly bounded a.e.\ and thus,
$$
  \na Q(u_{n+1}^{(\tau)}) = \frac{Q'(u_{n+1}^{(\tau)})}{Q''(u_{n+1}^{(\tau)})}
	\na Q'(u_{n+1}^{(\tau)}) 
	= \frac{2q(u_{n+1}^{(\tau)})}{q'(u_{n+1}^{(\tau)})}\na Q'(u_{n+1}^{(\tau)})
$$
is uniformly bounded in $L^2(0,T;L^2(\Omega))$. We conclude that
\begin{equation}\label{hyp.a3}
  \|Q(u_{n+1}^{(\tau)})\|_{L^2(0,T;H^1(\Omega))} \le C.
\end{equation}
Estimates \eqref{hyp.a1}-\eqref{hyp.a3} show that the assumptions of Lemma 
\ref{lem.aubin2} are fulfilled, and we infer the existence of a subsequence,
which is not relabeled, such that, as $\tau\to 0$,
\begin{equation}\label{conv.u}
  u_{n+1}^{(\tau)} \to u_{n+1}\quad\mbox{strongly in }L^r(0,T;L^r(\Omega)),
	\quad r<\infty.
\end{equation}
This result, the bound \eqref{naq}, and the continuity of $q$ imply that
\begin{align}
  q(u_{n+1}^{(\tau)})^{1/2} \to q(u_{n+1})^{1/2} &\quad\mbox{strongly in }
	L^r(0,T;L^r(\Omega)), \quad r<\infty, \label{conv.q12} \\
  q(u_{n+1}^{(\tau)})^{1/2} \rightharpoonup q(u_{n+1})^{1/2} &\quad\mbox{weakly in }
	L^2(0,T;H^1(\Omega)). \label{conv.qH1}
\end{align}

Using the $L^\infty$ bound for $(u_i^{(\tau)})$, we have, up to a subsequence,
$u_i^{(\tau)}\rightharpoonup^* u_i$ weakly$^*$ in $L^\infty(0,T;L^\infty(\Omega))$
as $\tau\to 0$. This convergence also holds in $L^2$. Thus, \eqref{conv.u}
implies that the relation $u_{n+1}^{(\tau)}=1-\sum_{i=1}^n u_i^{(\tau)}$ 
is satisfied by the limit function, $u_{n+1}=1-\sum_{i=1}^n u_i$. The set
$\{v\in L^2(0,T;L^2(\Omega)):v\ge 0$ a.e.\ in $\Omega\times(0,T)\}$
is (strongly) closed and convex. Hence, it is also weakly closed, and the
property $u_i^{(\tau)}\ge 0$ holds in the limit, i.e.\ 
$u_i\ge 0$ a.e.\ in $\Omega\times(0,T)$.

We turn to the convergence properties of the sequences $(u_i^{(\tau)})$ for 
$i=1,\ldots,n$. We cannot expect strong convergence of $(u_i^{(\tau)})$,
but the generalized Aubin-Lions Lemma \ref{lem.aubin1} shows that the product
$f(u^{(\tau)})q(u_{n+1}^{(\tau)})^{1/2}$ converges strongly, where $f$
is any continuous function.
To make this precise, we verify the assumptions of Lemma \ref{lem.aubin1}.
Set $\xi^{(\tau)}:=q(u_{n+1}^{(\tau)})^{1/2}$ and $\eta_i^{(\tau)}:=u_i^{(\tau)}$.
Because of the $L^\infty$ bounds for $(u_i^{(\tau)})$, up to a subsequence,
$$
  \eta_i^{(\tau)}\rightharpoonup^* \eta_i = u_i\quad\mbox{weakly$^*$ in }
	L^\infty(0,T;L^\infty(\Omega)).
$$
Furthermore, by \eqref{conv.q12},
$\xi^{(\tau)}\to\xi=q(u_{n+1})^{1/2}$ strongly in $L^2(0,T;L^2(\Omega))$.
Estimates \eqref{naq}, \eqref{uq.H1}, and \eqref{hyp.a1} show that
the assumptions of Lemma \ref{lem.aubin1} are satisfied, and we conclude the
existence of a subsequence (not relabeled) such that
$$
  f(u^{(\tau)})q(u_{n+1}^{(\tau)})^{1/2} = f(\eta_i^{(\tau)})\xi^{(\tau)}
	\to f(\eta)\xi = f(u_i)q(u_{n+1})^{1/2} \quad\mbox{strongly in }L^2(0,T;L^2(\Omega))
$$
for any function $f\in C^0(\overline{\D};\R^n)$. We choose $f(s)=s_i^{1/2}$ and
$f(s)=s_i p_i(s)$ for $s=(s_i)\in\overline{\D}$. Then
\begin{align}
  (u_i^{(\tau)})^{1/2}q(u_{n+1}^{(\tau)})^{1/2} \to
	u_i^{1/2}q(u_{n+1})^{1/2} &\quad\mbox{strongly in }L^2(0,T;L^2(\Omega)), 
	\nonumber \\
	u_i^{(\tau)} p_i(u^{(\tau)})q(u_{n+1}^{(\tau)})^{1/2}
	\to u_ip_i(u)q(u_{n+1})^{1/2} &\quad\mbox{strongly in }L^2(0,T;L^2(\Omega)).
	\label{conv.upqL2}
\end{align}
We conclude from the bounds \eqref{upq.H1} and \eqref{uq.H1} that the above
sequences converge weakly in $L^2(0,T;$ $H^1(\Omega))$ and the limit functions
can be identified:
\begin{align}
  (u_i^{(\tau)})^{1/2}q(u_{n+1}^{(\tau)})^{1/2} \rightharpoonup
	u_i^{1/2}q(u_{n+1})^{1/2} &\quad\mbox{weakly in }L^2(0,T;H^1(\Omega)), 
	\label{conv.uq} \\
	u_i^{(\tau)} p_i(u^{(\tau)})q(u_{n+1}^{(\tau)})^{1/2}
	\rightharpoonup u_ip_i(u)q(u_{n+1})^{1/2} 
	&\quad\mbox{weakly in }L^2(0,T;H^1(\Omega)). \label{conv.upq}
\end{align}
We infer from estimate \eqref{hyp.a1} that
$$
  \tau^{-1}(u_i^{(\tau)}-\pi_\tau u_i^{(\tau)}) \rightharpoonup
	\pa_t u_i \quad\mbox{weakly in }L^2(0,T;H^1(\Omega)'), \quad i=1,\ldots,n.
$$
Moreover, taking into account \eqref{w.H1}, 
$$
  \tau^2 w^{(\tau)} \to 0 \quad\mbox{strongly in }L^2(0,T;H^1(\Omega)).
$$
These convergence results as well as the convergences 
\eqref{conv.q12}-\eqref{conv.upqL2} and \eqref{conv.upq} allow us to
perform the limit $\tau\to 0$ in \eqref{weak.tau},
which yields the weak formulation \eqref{weak}.


\subsection{Entropy inequality and positivity}

It remains to verify the entropy inequality \eqref{ei} and the (conditional) positivity
of $u_{n+1}$. Since the entropy density $h$ is convex and continuous,
it is weakly lower semi-continuous \cite[Corollary 3.9]{Bre11}. Thus, by
the weak convergence of $(u_i^{(\tau)}(t))$,
$$
  \int_\Omega h(u(t))dx \le \liminf_{\tau\to 0}\int_\Omega h(u^{(\tau)}(t))dx
	\quad\mbox{for a.e. }t>0.
$$
Employing the convergences \eqref{conv.u}, \eqref{conv.q12}, and \eqref{conv.uq},
it follows that
$$
  q(u_{n+1}^{(\tau)})\na (u_{i}^{(\tau)})^{1/2} 
	= q(u_{n+1}^{(\tau)})^{1/2}\na \big(q(u_{n+1}^{(\tau)})^{1/2}
	(u_{i}^{(\tau)})^{1/2}\big)
  - q(u_{n+1}^{(\tau)})^{1/2}(u_{i}^{(\tau)})^{1/2}\na q(u_{n+1}^{(\tau)})^{1/2}
$$
converges weakly in $L^1$, but because of the $L^2$ bound \eqref{naq} this
convergence also holds in $L^2$:
$$
  q(u_{n+1}^{(\tau)})\na (u_{i}^{(\tau)})^{1/2} \rightharpoonup
	q(u_{n+1})\na u_i^{1/2} \quad\mbox{weakly in }L^2(0,T; L^2(\Omega)).
$$
These results, together with \eqref{conv.qH1}, allow us to pass to the limit
inferior $\tau\to 0$ in \eqref{edi2}, yielding \eqref{ei}.

Finally, assume that 
\begin{equation}\label{q.infty}
  \int_0^b|\log q(s)|ds=+\infty \quad\mbox{for all }0<b<1.
\end{equation}
We deduce from the discrete entropy inequality \eqref{edi2} and 
definition \eqref{h} of $h$ that
$$
  \int_\Omega\int_a^{u^{(\tau)}_{n+1}(x,t)}\log q(s)dsdx
	\le \int_\Omega h(u^{(\tau)}(x,t))dx \le \int_\Omega h(u^0)dx
	\quad\mbox{for a.e. }t>0.
$$
Then, by the strong convergence \eqref{conv.u} of $(u_{n+1}^{(\tau)})$
and the nonnegativity of $\int_a^b \log q(s)ds\ge 0$, we can apply Fatou's lemma
yielding
$$
  \int_\Omega\int_a^{u_{n+1}(x,t)}\log q(s)dsdx \le \int_\Omega h(u^0)dx.
$$
In particular, $\int_a^{u_{n+1}(x,t)}\log q(s)ds<\infty$ for a.e.\ $x\in\Omega$.
We conclude from this fact and assumption \eqref{q.infty} that $u_{n+1}(x,t)>0$
for a.e.\ $x\in\Omega$ and $t\in(0,T)$, which ends the proof.


\section{Proof of Theorem \ref{thm.conv}}\label{sec.conv}

We define the relative entropy density
\begin{equation}\label{hstar}
  h^*(u|u^\infty) = h(u)-h(u^\infty)-h'(u^\infty)\cdot(u-u^\infty)\quad
	\mbox{for }u\in\R^n.
\end{equation}
We split $h^*$ in several parts, $h^*=h_1^*+h_2^*+h_3^*$, 
each of which is nonnegative, where
\begin{align*}
  h_1^*(u|u^\infty) &= \sum_{i=1}^n\left( u_i\log\frac{u_i}{u_i^\infty}-u_i+u_i^\infty
	\right), \\
	h_2^*(u_{n+1}|u^\infty) &= \int_{u_{n+1}^\infty}^{u_{n+1}}
	\log\frac{q(s)}{q(u_{n+1}^\infty)}ds 
	= \int_1^{u_{n+1}/u_{n+1}^\infty}
	\log\frac{q(\sigma u_{n+1}^\infty)}{q(u_{n+1}^\infty)}u_{n+1}^\infty d\sigma, \\
	h_3^*(u|u^\infty) &= \chi(u)-\chi(u^\infty)-\sum_{i=1}^n(u_i-u_i^\infty)
	\log p_i(u^\infty),
\end{align*}
where $\chi$ is defined in \eqref{hp.p}.
The entropy inequality \eqref{ei} and the $L^1$ conservation of $u(t)$ give
\begin{align}\label{edi3}
  \int_\Omega h^*(u(t)|u^\infty)dx
	&+ c_0\int_0^t\int_\Omega\Big(q(u_{n+1})^2\sum_{i=1}^n|\na u_i^{1/2}|^2
	+ |\na q(u_{n+1})^{1/2}|^2\Big)dxds \\
	&\le \int_\Omega h^*(u^0|u^\infty)dx, \quad t>0. \nonumber
\end{align}
We prove now that the above entropy inequality, reduced to an inequality for
$h_2^*$, and the convex Sobolev inequality in
Lemma \ref{lem.csi} yield exponential convergence of $u_{n+1}(t)$, while
the entropy estimate for $h_1^*$ and the logarithmic Sobolev inequality
allows us to conclude the convergence of $u_i(t)$ for $i=1,\ldots,n$.

{\em Step 1: Exponential convergence of $u_{n+1}(t)$.}
Let $g(s) = \int_1^s\log q(\sigma u_{n+1}^\infty)d\sigma$
for $s\in[0,1]$. This function is convex since $g''(s)=u_{n+1}^\infty
q'(su_{n+1}^\infty)/q(su_{n+1}^\infty)>0$ by assumption. Again by assumption,
$1/g''=(u_{n+1}^\infty)^{-1} q/q'$ is concave. 
Choosing $\phi_i=1$ in the weak formulation \eqref{weak} and summing the
equations from $i=1,\ldots,n$, it follows that
$\int_\Omega u_{n+1}(t)/u_{n+1}^\infty dx=\int_\Omega u_{n+1}^0/u_{n+1}^\infty dx 
= |\Omega|$ for $t>0$, and in particular,
$$ 
  g\left(\frac{1}{|\Omega|}\int_\Omega \frac{u_{n+1}}{u_{n+1}^\infty}dx\right) 
	= g(1) = 0.
$$
Thus, we may apply 
the convex Sobolev inequality in the version of Lemma \ref{lem.csi}:
\begin{align*}
  \frac{1}{|\Omega|}\int_\Omega h_2^*(u_{n+1} | u^\infty)dx
	&= \frac{u_{n+1}^\infty}{|\Omega|}\int_\Omega 
	g\left(\frac{u_{n+1}}{u_{n+1}^\infty}\right)dx
	\le \frac{c_S u_{n+1}^\infty}{|\Omega|}\int_\Omega 
	g''\left(\frac{u_{n+1}}{u_{n+1}^\infty}\right)
	\left|\na\frac{u_{n+1}}{u_{n+1}^\infty}\right|^2 dx \\
	&= \frac{c_S}{|\Omega|}\int_\Omega \frac{q'(u_{n+1})}{q(u_{n+1})}
	|\na u_{n+1}|^2 dx.
\end{align*}
By assumption, $q'$ is strictly positive on $[0,1]$, i.e.\
$0<q_1\le q'(s)$ for $s\in[0,1]$, so
\begin{align*}
  \frac{1}{|\Omega|}\int_\Omega h_2^*(u_{n+1}|u^\infty)dx
	&\le \frac{c_S}{q_1|\Omega|}\int_\Omega 
	\frac{q'(u_{n+1})^2}{q(u_{n+1})}|\na u_{n+1}|^2 dx \\
	&= \frac{4c_S}{q_1|\Omega|}\int_\Omega |\na q(u_{n+1})^{1/2}|^2 dx.
\end{align*}
Therefore, \eqref{edi3} yields
$$
  \int_\Omega h_2^*(u_{n+1}(t)|u_{n+1}^\infty)dx
	+ \frac{c_0 q_1}{4c_S}\int_0^t\int_\Omega h_2^*(u_{n+1}(t)|u_{n+1}^\infty)dxds
	\le \int_\Omega h^*(u^0|u^\infty)dx,
$$
and Gronwall's lemma gives
\begin{equation}\label{ei.h2}
  \int_\Omega h_2^*(u_{n+1}(t)|u_{n+1}^\infty)dx 
	\le e^{-c_0 q_1t/(4c_S)}\int_\Omega h^*(u^0|u^\infty)dx.
\end{equation}
The strict positivity of $q'$ implies that the function 
$s\mapsto h_2^*(s|u_{n+1}^\infty)$ is strictly convex. Moreover,
$h_2^*(u_{n+1}^\infty|u_{n+1}^\infty)=0$ and 
$(h_2^*)'(u_{n+1}^\infty|u_{n+1}^\infty)=0$. Therefore, by a Taylor expansion,
$h_2^*(u_{n+1}|u_{n+1}^\infty)\ge (\gamma/2)(u_{n+1}-u_{n+1}^\infty)^2$.
Inserting this inequality in \eqref{ei.h2} gives \eqref{conv.unp1}.

{\em Step 2: Convergence for $(u_i(t))$.} 
We assume that $q(s)\ge q_0>0$ for $s\in[0,1]$.
It follows from the entropy inequality \eqref{edi3} that
$$
  \int_{\Omega}h_1^*(u(t)|u^\infty)dx
	+ c_0q_0\int_{0}^t\int_{\Omega_\eps}\sum_{i=1}^n|\na u_i^{1/2}|^2 dxds
	\le \int_\Omega h^*(u^0|u^\infty)dx, \quad t>0.
$$
We apply the logarithmic Sobolev inequality on bounded domains with constant $c_L>0$
\cite[Lemma 1]{DeFe14},
$$
  \int_{\Omega_\eps}h_1^*(u(t)|u^\infty)dx
	= \sum_{i=1}^n\int_{\Omega}u_i\log\frac{u_i}{u_i^\infty}dx
	\le c_L\sum_{i=1}^n\int_\Omega|\na u_i^{1/2}|^2 dx.
$$
Inserting this inequality into the entropy estimate gives
$$
  \int_{\Omega}h_1^*(u(t)|u^\infty)dx
	+ \frac{c_0q_0}{c_L}\int_{\Omega_\eps}h_1^*(u(t)|u^\infty)dx
	\le \int_\Omega h^*(u^0|u^\infty)dx, \quad t>0,
$$
and then, Gronwall's lemma shows that
$$
  \int_{\Omega}h_1^*(u(t)|u^\infty)dx 
	\le e^{-c_0q_0 t/c_L}\int_\Omega h^*(u^0|u^\infty)dx, \quad t>0.
$$
Finally, since $h_1^\infty(u^\infty|u^\infty)
=|(h_1^*)'(u^\infty,u^\infty)|=0$, and $\pa^2 h_1^*/\pa u_i\pa u_j
=\delta_{ij}/u_i\ge \delta_{ij}$ for $u\in\overline\D$, we obtain 
$h_1^*(u|u^\infty)\ge |u-u^\infty|^2$, which proves estimate \eqref{conv.ui}
and finishes the proof.


\section{Proof of Theorem \ref{thm.unique}}\label{sec.unique}

Let $u=(u_1,\ldots,u_n)$ and $v=(v_1,\ldots,v_n)$ be two bounded weak solutions
to \eqref{eq}-\eqref{bic}. 
Since $p_i\equiv 1$ for all $i=1,\ldots,n$ by assumption, \eqref{eq} becomes
\begin{equation}\label{eq.u}
  \pa_t u_i = \diver\big(q(u_{n+1})\na u_i - u_i\na q(u_{n+1})\big), \quad
	i=1,\ldots,n.
\end{equation}
Summing these equations from $i=1,\ldots,n$, the equation for 
$u_{n+1}=1-\sum_{i=1}^n u_i$ reads as
\begin{equation}\label{eqQ}
  \pa_t u_{n+1} = \diver\big(q(u_{n+1})\na u_{n+1} + (1-u_{n+1})\na q(u_{n+1})\big)
	= \Delta Q(u_{n+1}),
\end{equation}
where $Q(s)=\int_0^s (q(\sigma)+(1-\sigma)q'(\sigma))d\sigma$ for $0\le s\le 1$.
Furthermore, $\na Q(u_{n+1})\cdot\nu=0$ on $\pa\Omega$, $t>0$ and 
$u_{n+1}(0)=u_{n+1}^0:=1-\sum_{i=1}^n u_i^0$, and similar equations holds for
$v_{n+1}$.
Since $Q$ is a nondecreasing function, we can apply first the $H^{-1}$ method to
\eqref{eqQ} to show uniqueness for the $(n+1)$th component, i.e.
$u_{n+1}=v_{n+1}$. Second, we employ the convexity of the entropy 
to prove that $u_i=v_i$ for $i=1,\ldots,n$.

{\em Step 1: Uniqueness for $u_{n+1}$.}
Let $t>0$ and let $\zeta(t)\in H^1(\Omega)$ be the unique solution to 
$$
  -\Delta\zeta(t) = (u_{n+1}-v_{n+1})(t) \quad\mbox{in }\Omega, \quad
	\na\zeta\cdot\nu = 0\quad\mbox{on }\Omega.
$$
We know that $u_{n+1}-v_{n+1}\in L^2(0,T;L^2(\Omega))$. Thus, $t\mapsto \zeta(t)$
is Bochner integrable and $\zeta\in L^2(0,T;H^1(\Omega))$. As 
$\pa_t(u_{n+1}-v_{n+1})\in L^2(0,T;H^1(\Omega)')$, we have even the regularity
$\Delta\pa_t\zeta\in L^2(0,T;H^1(\Omega)')$. Therefore, using \eqref{eqQ},
we obtain for a.e.\ $t>0$,
\begin{align*}
  \frac12\,\frac{d}{dt}\int_\Omega|\na\zeta|^2 dx
	&= \langle -\Delta\pa_t\zeta,\zeta\rangle 
	= \langle\pa_t(u_{n+1}-v_{n+1}),\zeta\rangle \\
	&= -\int_\Omega\na\big(Q(u_{n+1})-Q(v_{n+1})\big)\cdot\na\zeta dx \\
	&= -\int_\Omega\big(Q(u_{n+1})-Q(v_{n+1})\big)(u_{n+1}-v_{n+1})dx.
\end{align*}
Here, $\langle\cdot,\cdot\rangle$ again denotes the duality pairing of
$H^1(\Omega)'$ and $H^1(\Omega)$. The right-hand side is nonpositive since
$Q$ is nondecreasing. This implies that
$$
  \int_\Omega|\na\zeta(t)|^2 dx \le \int_\Omega|\na \zeta(0)|^2 dx, \quad
	t>0.
$$
At time $t=0$, $-\Delta\zeta(0)=(u_{n+1}-v_{n+1})(0)=0$ in $\Omega$,
thus $\na\zeta(0)=0$. Hence, $|\na\zeta(t)|=0$ a.e.\ in $\Omega$, which gives
$(u_{n+1}-v_{n+1})(t)=-\Delta\zeta(t)=0$ in $\Omega$.

{\em Step 2: Uniqueness for $(u_1,\ldots,u_n)$.} 
Let $0<\eps<1$. Similarly as in \cite{Gaj94}, we introduce the distance
\begin{align*}
  & d_\eps(u.v) = \sum_{i=1}^n\int_\Omega\left(\xi_\eps(u_i) + \xi_\eps(v_i)
	- 2\xi_\eps\left(\frac{u_i+v_i}{2}\right)\right)dx, \\
	& \mbox{where }\xi_\eps(s)=(s+\eps)(\log(s+\eps)-1)+1, \ s\ge 0.
\end{align*}
As $\xi_\eps$ is convex, we have 
$\xi_\eps(u_i)+\xi_\eps(v_i)-2\xi_\eps((u_i+v_i)/2)\ge 0$
in $\Omega$ and hence, $d_\eps(u_i,v_i)\ge 0$. We need the regularization
$\eps>0$ since $u_i$ and $v_i$ are only nonnegative and thus, expressions
like $\log((u_i+v_i)/2)$ may be undefined. Since $u_{n+1}=v_{n+1}$ by Step 1,
we may abbreviate $q:=q(u_{n+1})=q(v_{n+1})$. Then, using \eqref{eq.u}, we compute
\begin{align*}
  \frac{d}{dt}d_\eps(u,v) 
	&= \sum_{i=1}^n\bigg(\langle\pa_t u_i,\log(u_i+\eps)\rangle
	+ \langle\pa_t v_i,\log(v_i+\eps)\rangle \\
	&\phantom{xx}{}- \left\langle\pa_t(u_i+v_i),\log\left(\frac{u_i+v_i}{2}
	+\eps\right)\right\rangle\bigg) \\
  &= -\sum_{i=1}^n\int_\Omega\bigg((q\na u_i-u_i\na q)\cdot\frac{\na u_i}{u_i+\eps}
	+ (q\na v_i-v_i\na q)\cdot\frac{\na v_i}{v_i+\eps} \\
	&\phantom{xx}{}- \big(q\na(u_i+v_i) - (u_i+v_i)\na q\big)\cdot
	\frac{\na(u_i+v_i)}{u_i+v_i+2\eps}\bigg)dx.
\end{align*}
Rearranging the terms, we arrive at
\begin{align*}
  \frac{d}{dt}d_\eps(u,v)
  &= -\sum_{i=1}^{n}\int_{\Omega}\left(\frac{|\na u_{i}|^{2}}{u_{i}+\eps} 
	+ \frac{|\na v_{i}|^{2}}{v_{i}+\eps} 
	- \frac{|\na (u_{i}+v_{i})|^{2}}{u_{i}+v_{i}+2\eps}\right) q dx \\
  &\phantom{xx}{} + \sum_{i=1}^{n}\int_{\Omega}\left(\frac{u_{i}}{u_{i}+\eps} 
	- \frac{u_{i}+v_{i}}{u_{i}+v_{i}+2\eps}\right)\na q\cdot\na u_{i}dx \\
  &\phantom{xx}{} + \sum_{i=1}^{n}\int_{\Omega}\left(\frac{v_{i}}{v_{i}+\eps} 
	- \frac{u_{i}+v_{i}}{u_{i}+v_{i}+2\eps}\right)\na q\cdot\na v_{i} dx \\
  &= -4\sum_{i=1}^{n}\int_{\Omega}\left(|\na\sqrt{u_{i}+\eps}|^{2} 
	+ |\na\sqrt{v_{i}+\eps}|^{2} 
	- |\na\sqrt{u_{i}+v_{i}+2\eps}|^{2}\right) q dx \\
  &\phantom{xx}{} + 2\sum_{i=1}^{n}\int_{\Omega}\left(\frac{u_{i}}{u_{i}+\eps} 
	- \frac{u_{i}+v_{i}}{u_{i}+v_{i}+2\eps}\right)\sqrt{q}\na\sqrt{q}\cdot
	\na u_{i} dx \\
  &\phantom{xx}{} + 2\sum_{i=1}^{n}\int_{\Omega}\left(\frac{v_{i}}{v_{i}+\eps} 
	- \frac{u_{i}+v_{i}}{u_{i}+v_{i}+2\eps}\right)\sqrt{q}\na\sqrt{q}\cdot\na v_{i}dx.
\end{align*}
Now, we apply Lemma \ref{lem.fisher} with $d\mu=qdx$ and $f=u_i+\eps$,
$g=v_i+\eps$, showing that the first integral on the right-hand side is
nonnegative. We observe that $d_\eps(u(0),v(0))=0$ as $u$ and $v$ have the same
initial data. Thus, integrating the above expression in time, we obtain
\begin{align}
  d_\eps(u(t),v(t)) &\le 2\sum_{i=1}^{n}\int_0^t\int_{\Omega}
	\left(\frac{u_{i}}{u_{i}+\eps} 
	- \frac{u_{i}+v_{i}}{u_{i}+v_{i}+2\eps}\right)\sqrt{q}\na\sqrt{q}\cdot
	\na u_{i} dx \label{aux4} \\
  &\phantom{xx}{} + 2\sum_{i=1}^{n}\int_0^t\int_{\Omega}\left(\frac{v_{i}}{v_{i}+\eps} 
	- \frac{u_{i}+v_{i}}{u_{i}+v_{i}+2\eps}\right)\sqrt{q}\na\sqrt{q}\cdot\na v_{i}dx.
	\nonumber
\end{align}
Since $\na\sqrt{q}$, $\sqrt{q}\na u_{i}$, $\sqrt{q}\na v_{i}\in 
L^{2}(0,T; H^{1}(\Omega))$ and
$$
  \left| \frac{u_{i}}{u_{i}+\eps} \right|\leq 1,\quad 
	\left| \frac{v_{i}}{v_{i}+\eps} \right|\leq 1,\quad
  \left| \frac{u_{i}+v_{i}}{u_{i}+v_{i}+2\eps} \right|\leq 1,
$$
the dominated convergence implies that the right-hand side of \eqref{aux4}
tends to zero as $\eps\to 0$. From the nonnegativity of $d_\eps$ we deduce that
$d_\eps(u(t),v(t))\to 0$ as $\eps\to 0$, which means that
\begin{equation}\label{aux5}
  \xi_\eps(u_i) + \xi_\eps(v_i) - 2\xi_\eps\left(\frac{u_i+v_i}{2}\right)
	\to 0 \quad \mbox{as }\eps\to 0 \quad\mbox{a.e. in }\Omega\times(0,\infty).
\end{equation}
According to Taylor's formula, there are functions $\theta_\eps$, $\eta_\eps:
\Omega\times(0,\infty)$ such that 
\begin{align*}
  &\xi_\eps(u_i) = \xi_\eps\left(\frac{u_i+v_i}{2}+\frac{u_i-v_i}{2}\right) \\
	&= \xi_{\eps}\left(\frac{u_{i}+v_{i}}{2}\right) 
	+ \xi_{\eps}'\left(\frac{u_{i}+v_{i}}{2}\right)\frac{u_{i}-v_{i}}{2} 
  + \frac{1}{2}\xi_{\eps}''\left(\theta_{\eps}\frac{u_{i}+v_{i}}{2} 
	+ (1-\theta_{\eps})u_{i}\right)\left(\frac{u_{i}-v_{i}}{2}\right)^{2}, \\
  &\xi_{\eps}(v_{i}) = \xi_{\eps}\left( \frac{u_{i}+v_{i}}{2} 
	- \frac{u_{i}-v_{i}}{2} \right) \\
  &= \xi_{\eps}\left(\frac{u_{i}+v_{i}}{2}\right) 
	- \xi_{\eps}'\left(\frac{u_{i}+v_{i}}{2}\right)\frac{u_{i}-v_{i}}{2} 
  + \frac{1}{2}\xi_{\eps}''\left(\eta_{\eps}\frac{u_{i}+v_{i}}{2} 
	+ (1-\eta_{\eps})v_{i}\right)\left(\frac{u_{i}-v_{i}}{2}\right)^{2}.
\end{align*}
Adding these identities and employing the estimate $\xi_\eps''(s)=(s+\eps)^{-1}
\ge 1/2$ for $0\le s\le 1$, we infer that
$$
  \xi_{\eps} (u_{i}) + \xi_{\eps}(v_{i}) 
	- 2 \xi_{\eps}\left( \frac{u_{i}+v_{i}}{2}\right) 
	\ge\frac18(u_i-v_i)^2.
$$
This estimate and \eqref{aux5} prove that $u_i=v_i$ in $\Omega\times(0,\infty)$
for $i=1,\ldots,n$.


\section{Extensions}\label{sec.ext}

In this section, we discuss some extensions of the diffusion system \eqref{eq}.

{\em Reaction terms.} Cross-diffusion systems with reaction terms,
\begin{equation}\label{eq.f}
  \pa_t u - \diver(A(u)\na u) = f(u) \quad\mbox{in }\Omega,\ t>0,
\end{equation}
can be treated similarly as in \cite{Jue14}. More precisely, if
there is a constant $c_f>0$ such that $f(u)\cdot h'(u)\le c_f(1+h(u))$ for 
all $u\in\D$, then there exists a global weak solution to \eqref{bic} and
\eqref{eq.f}. The proof proceeds as for Theorem \ref{thm.ex}, where 
the right-hand side of the entropy inequality \eqref{edi} has to be replaced by
$$
  \int_\Omega h(u^0)dx + \tau\int_\Omega f(u)\cdot h'(u)dx
	\le \int_\Omega h(u^0)dx + \tau c_f\int_\Omega(1+h(u))dx.
$$
Then, for sufficiently small $\tau>0$, the integral $\tau c_f\int_\Omega h(u)dx$
can be absorbed by the left-hand side of \eqref{edi}.
For instance, reaction terms of Lotka-Volterra type
$$
  f_i(u) = u_i\bigg(1-\sum_{j=1}^n s_{ij}u_j\bigg), \quad i=1,\ldots,n, \ 
	s_{ij}\ge 0,
$$
are admissible. The large-time behavior result is valid only under
an additional condition on $f(u)$, namely $f(u)\cdot h'(u)\le 0$ for $u\in\D$.
If we suppose conservation of the ``total mass'', i.e.\
$\sum_{i=1}^n f_i(u)=0$, the $H^{-1}$ method allows us to prove uniqueness
for $u_{n+1}$. Uniqueness for the remaining components $u_i$ follows if there
exists $C>0$ such that for all $u_i$ and $v_i$, 
$$
  \sum_{i=1}^n\left(f_i(u)\log\frac{2u_i}{u_i+v_i}
	+ f_i(v)\log\frac{2v_i}{u_i+v_i}\right)
	\le C\sum_{i=1}^n\left(\xi(u_i)+\xi(v_i)-2\xi\left(\frac{u_i+v_i}{2}\right)\right),
$$
where $\xi(s)=s(\log s-1)+1$ (see the proof of Theorem \ref{thm.unique}). More
general conditions on $f(u)$ can be found in \cite{GaSk04}.

{\em Drift terms.} In the presence of environmental or electric potentials
or of chemotactic signal concentrations, 
the diffusion system contains additional drift terms,
\begin{equation}\label{drift}
  \pa_t u - \diver(A(u)\na u + D(u)\na\phi) = 0\quad\mbox{in }\Omega,\ t>0,
\end{equation}
where $D(u)=(D_{ij}(u))$ is an $n\times n$ matrix and the $i$th component of 
$D(u)\na\phi$
is given by $\sum_{j=1}^n D_{ij}(u)\na\phi_j$, where $\phi_j=\phi_j(x)$ 
is some potential.
Assume that $h$ is such that 
$\na u:h''(u)A(u)\na u \ge \sum_{i=1}^n g_i(u)|\na u_i|^2$ for some
nonnegative functions $g_i(u)$. Then, using the test function $h'(u)$ in the
weak formulation of \eqref{drift}, we compute
\begin{align*}
  \frac{d}{dt}\int_\Omega h(u)dx &= -\int_\Omega\na u:h''(u)A(u)\na u dx
	- \int_\Omega \na u:h''(u)D(u)\na\phi dx \\
	&\le -\frac12\sum_{i=1}^n\int_\Omega g_i(u)|\na u_i|^2dx
	+ \frac12\sum_{k=1}^n\int_\Omega G_k(u)|\na\phi_k|^2dx,
\end{align*}
where we employed the Cauchy-Schwarz inequality and have set
$G_k(u)=\sum_{i,j=1}^n g_i(u)^{-1}H_{ij}^2$ $D_{jk}(u)^2$ with $H=h''(u)$.
Thus, if $\na\phi_i$ is bounded in $L^2$ and $G_k(u)$ in $L^\infty$, we achieve
some gradient estimates, which are the basis for the existence analysis.
An example is the ion-transport model \cite{BSW12}
$$
  A_{ij}(u) = u_i\quad \mbox{for }i\neq j, \quad
	A_{ii}(u) = u_i+u_{n+1}, \quad D_{ij}(u) = u_i u_{n+1}\delta_{ij}
$$
for $i,j=1,\ldots,n$. The entropy density can be defined by
$$
  h(u) = \sum_{i=1}^{n}\big(u_i(\log u_i-1) + u_i\phi_i\big)
	+ u_{n+1}(\log u_{n+1}-1).
$$
Then the Hessian $h''(u)$ does not depend on $\phi_i$.
A formal computation, using the Cauchy-Schwarz inequality and 
the identity $\sum_{i=1}^n\na u_i=-\na u_{n+1}$, gives 
\begin{align*}
  \frac{d}{dt} & \int_\Omega h(u)dx 
	= -\sum_{i=1}^n\int_\Omega
	u_iu_{n+1}\left|\na\left(\log\frac{u_i}{u_{n+1}}+\phi_i\right)\right|^2 dx \\
  &\le -\sum_{i=1}^n\int_\Omega
	u_iu_{n+1}\left(\frac12\left|\na\log\frac{u_i}{u_{n+1}}\right|^2
	- |\na\phi_i|^2\right)dx \\
	&= -\sum_{i=1}^n\int_\Omega\big(2u_{n+1}|\na u_i^{1/2}|^2 + |\na u_{n+1}|^2
	+ 2|\na u_{n+1}^{1/2}|^2\big)dx 
	+ \sum_{i=1}^n\int_\Omega u_iu_{n+1}|\na\phi_i|^2 dx.
\end{align*}
As $q(s)=s$ in this model, we find the same estimates as in the proof of Theorem 
\ref{thm.ex} (also see \cite[Section 3.2]{BDPS10}). This shows that our
strategy can be adapted to cross-diffusion systems with drift.

{\em Other diffusion coefficients.} Our main assumption on the transition
rates is that they are given by the product of $p_i(u)$ and $q_i(u_{n+1})$
(see Appendix \ref{sec.deriv}). Also other choices are possible. An example
is the diffusion system of \cite{Pai09}, which is derived from a stochastic
lattice model
by assuming that the transition rates are given by $p_i(u)+q_i(u_{n+1})$
for some special functions $p_i$ and $q_i$. 
The diffusion matrix has the structure
$$
  A(u) = \begin{pmatrix}
	\alpha_1(1-u_2) + u_2 & (\alpha_1-1)u_1 \\
	(\alpha_2-1)u_2 & \alpha_2(1-u_1) + u_1
	\end{pmatrix},
$$
where $\alpha_1$, $\alpha_2>0$. The corresponding diffusion system
possesses the entropy density
$$
  h(u) = \sum_{i=1}^2 u_i(\log u_i-1) + (1-u_1-u_2)(\log(1-u_1-u_2)-1), \quad
	u=(u_1,u_2)\in\D,
$$
and the new diffusion matrix $B=h''(u)^{-1}A(u)$, given by
$$
  B = \begin{pmatrix}
	(\alpha_1(1-u_1-u_2)+u_2)u_1 & -u_1u_2 \\
	-u_1u_2 & (\alpha_2(1-u_1-u_2)+u_1)u_2
	\end{pmatrix},
$$
is symmetric and positive semi-definite on $\overline{\D}$. 
For our analysis, we need bounds from $h''(u)A(u)$ (see Lemma \ref{lem.HA}),
which are less obvious since
\begin{align*}
  \na u_1^\top h''(u)A(u)\na u_2 
	&= \frac{a_1|(1-u_2)\na u_1 + u_1\na u_2|^2}{u_1(1-u_1-u_2)}
	+ \frac{a_2|u_2\na u_1 + (1-u_1)\na u_2|^2}{u_2(1-u_1-u_2)} \\
	&\phantom{xx}{}+ 4|\na\sqrt{u_1u_2}|^2,
\end{align*}
only yielding an $L^2$ bound for $\na\sqrt{u_1u_2}$ in $L^2$.


\begin{appendix}
\section{Formal derivation of the $n$-species population model}\label{sec.deriv}

We derive formally the cross-diffusion system \eqref{eq} from a master equation
for a discrete-space random walk in the diffusion limit. 
We consider random walks on a one-dimensional lattice only, since the
derivation can be extended in a straightforward manner to the higher-dimensional
situation. The lattice is given by cells $x_j$ ($j\in{\mathbb Z}$) with the
uniform cell distance $h=x_j-x_{j-1}>0$. 
The proportions of the $i$th population in the $j$th cell at time $t>0$ is denoted 
by $u_i(x_j)=u_i(x_j,t)$. The species move from the $j$th cell
into the neighboring cells $j\pm 1$ with the transition rates $T^{j,\pm}_i$.
The master equations are given by
$$
  \pa_t u_i(x_j) = T_i^{j-1,+}u_i(x_{j-1}) + T_i^{j+1,-}u_i(x_{j+1})
	- (T_i^{j,+}+T_j^{i,-})u_i(x_j), \quad i=1,\ldots,n,
$$
and the transition rates are defined as 
\begin{equation}\label{trans}
  T_i^{j,\pm} = \sigma_0 p_i(u(x_j))q_i(u_{n+1}(x_j)), \quad 
	u_{n+1}(x_j)= 1-\sum_{k=1}^nu_k(x_j),
\end{equation}
where $u=(u_1,\ldots,u_n)$. The quantities $p_i(u(x_j))$ and $q_i(u_{n+1}(x_{j\pm 1}))$
measure the tendency of the species $i$ to leave the $j$th cell or to move
into the $j$th cell from one of the neighboring cells, respectively. 
More precisely, $u_i(x_j)$ denotes a volume fraction of occupancy and $u_{n+1}$ the
volume fraction not occupied by the species. Our assumption is that the
transition rates, measuring the occupancy and the non-occupancy, separate,
resulting in the product of $p_i$ and $q_i$. Other choices are possible
(see \cite{Pai09} for an example), but the analytical treatment of the
corresponding diffusion systems is not obvious.

For the derivation of the diffusion model, it is convenient to introduce the
following abbreviations:
\begin{align*}
  p^{j}_{i} &= p_{i}(u_{1}(x_{j}),\ldots,u_{n}(x_{j}) ),\quad 
	q^{j}_{i} = q_{i}(u_{n+1}(x_{j})), \\
  \pa_{k}p^{j}_{i} &= \frac{\pa p_{i}}{\pa u_{k}}(u_{1}(x_{j}),\ldots,u_{n}(x_{j})),
	\quad \pa q^{j}_{i} = q_{i}'(u_{n+1}(x_{j})).
\end{align*}
Thus, we can rewrite the master equation as
\begin{equation}\label{app.me}
  \sigma_0^{-1}\pa_t u_i^j = q_i^j(p_i^{j-1}u_i^{j-1} + p_i^{j+1}u_i^{j+1})
	- p_i^ju_i^j(q_i^{j+1}+q_i^{j-1}).
\end{equation}
Set $D=\pa_x$. 
We compute the Taylor expansions of $p_i$ and $q_i$ ($i=1,\ldots,n$) and
replace $u_k^{j\pm 1}-u_k^j$ by the Taylor expansion 
$\pm hDu_k^j + \frac12 h^2 D^2u_i^j + O(h^3)$. Then, collecting all terms
up to order $O(h^2)$, we arrive at
\begin{align*}
  p_i^{j\pm 1} &= p_i^j + h\sum_{k=1}^n\pa_k p_i^j Du_k^j
	+ \frac{h^2}{2}\left(\sum_{k=1}^n\pa_k p_i^jD^2u_k^j
	+ \sum_{k,\ell=1}^n\pa_{k\ell}^2 p_i^j Du_k^j Du_\ell^j\right) + O(h^3), \\
	q_i^{j\pm 1} &= q_i^j \pm h\pa p_i^j Du_{n+1}^j
	+ \frac{h^2}{2}\big(\pa q_i^j D^2 u_{n+1}^j + \pa^2 q_i^j(Du_{n+1}^j)^2\big) 
	+ O(h^3) \\
	&= q_i^j \mp h\pa q_i^j\sum_{k=1}^n Du_k^j 
	+ \frac{h^2}{2}\left(-\pa q_i^j\sum_{k=1}^n D^2 u_k^j
	+ \pa^2 q_i^j\sum_{k,\ell=1}^n Du_k^j Du_\ell^j\right) + O(h^3).
\end{align*}
In the last step, we have used $u_{n+1}=1-\sum_{k=1}^n u_k$.
We insert these expressions into \eqref{app.me} and rearrange the terms.
It turns out that the terms of order $O(1)$ and $O(h)$ cancel, and we end up with
\begin{align*}
  \sigma_0^{-1}h^{-2}\pa_t u_i^j
	&= \sum_{k=1}^n D^2 u_k^j(q_i^j p_i^j\delta_{ik} + q_i^j u_i^j \pa_k p_i^j
	+ p_i^j u_i^j\pa q_i^j) \\
	&\phantom{xx}{}+ \sum_{k,\ell=1}^n Du_k^j Du_\ell^j(2q_i^j\pa_k p_i^j\delta_{i\ell}
	+ q_i^j u_i^j\pa_{k\ell}^2 p_i^j - p_i^j u_i^j\pa^2 q_i^j).
\end{align*}
We choose $\sigma_0=h^{-2}$ and pass to the limit $h\to 0$:
\begin{align*}
  \pa_t u_i &= \sum_{k=1}^n D^2 u_k\left(q_i p_i\delta_{ik}
	+ q_iu_i\frac{\pa p_i}{\pa u_k} + p_iu_i q_i'\right) \\
	&\phantom{xx}{}+ \sum_{k,\ell=1}^n Du_k Du_\ell\left(2q_i\frac{\pa p_i}{\pa u_k}
	\delta_{i\ell} + q_iu_i\frac{\pa^2 p_i}{\pa u_k\pa u_\ell}
  - p_iu_iq_i''\right). 
\end{align*}
A lenghty but straightforward computation shows that the last sum equals
$$
   \sum_{k=1}^n Du_k D\left(q_ip_i\delta_{ik} + q_iu_i\frac{\pa p_i}{\pa u_k}
	+ p_iu_iq_i'\right),
$$
and we end up with 
$$
  \pa_t u_i = D\sum_{k=1}^n Du_k\left(q_i p_i\delta_{ik}
	+ q_iu_i\frac{\pa p_i}{\pa u_k} + p_iu_i q_i'\right),
$$
which is the one-dimensional version of \eqref{eq}.

\end{appendix}


\end{document}